\pdfoutput=1
\documentclass[11pt]{article}

\usepackage[margin=1.1in]{geometry}
\usepackage{amsmath,amssymb,amsthm,mathtools}
\usepackage{xcolor}
\usepackage{tikz}
\usetikzlibrary{arrows.meta,decorations.markings,decorations.pathreplacing,patterns}
\usepackage{array}
\usepackage{booktabs}
\usepackage[expansion=false]{microtype}
\usepackage{enumitem}
\usepackage[hidelinks]{hyperref}
\hypersetup{
  pdftitle={Which Holonomy Signatures Are Realizable? A Complete Answer for Closed
            Surfaces},
  pdfauthor={Leyi Zhu},
  pdfsubject={Seamless parametrization; holonomy signatures; k-differentials},
  pdfkeywords={seamless parametrization, holonomy signature, cross field,
               quadrangulation, quad mesh, k-differentials, strata,
               mapping class group, flat cone metric}}

\newtheorem{theorem}{Theorem}[section]
\newtheorem{lemma}[theorem]{Lemma}
\newtheorem{proposition}[theorem]{Proposition}
\newtheorem{corollary}[theorem]{Corollary}

\newtheorem{algorithm}[theorem]{Algorithm}
\theoremstyle{definition}
\newtheorem{definition}[theorem]{Definition}

\theoremstyle{remark}
\newtheorem{remark}[theorem]{Remark}

\newcommand{\ZZ}{\mathbb{Z}}
\newcommand{\CC}{\mathbb{C}}
\newcommand{\RR}{\mathbb{R}}

\newcommand{\PP}{\mathbb{P}}
\newcommand{\Zf}{\ZZ_4}
\newcommand{\im}{\operatorname{im}}
\newcommand{\Sp}{\operatorname{Sp}}
\newcommand{\MCG}{\operatorname{MCG}}
\newcommand{\divisor}{\operatorname{div}}

\newcommand{\content}{\operatorname{cont}}
\newcommand{\gen}[1]{\langle #1 \rangle}

\title{Which Holonomy Signatures Are Realizable?\\
\large A Complete Answer for Closed Surfaces}

\author{Leyi Zhu\\[2pt]
\normalsize Courant Institute of Mathematical Sciences\\
\normalsize New York University}

\date{}

\begin{document}

\maketitle

\begin{abstract}
A seamless parametrization of a closed oriented surface carries a discrete invariant, its
\emph{holonomy signature}: the cone angles, all multiples of $\pi/2$, together with the
rotational holonomy $\rho\colon H_1(M\setminus C)\to\Zf$ of the induced cross field. This is
the datum a quadrangulation prescribes. It does not determine the parametrization, since
structures sharing a signature come in positive-dimensional families, but it does decide
whether any parametrization exists at all. Shen, Zhu, Capouellez, Panozzo, Campen and Zorin
asked which signatures occur and gave a sufficient condition of gcd type. Which signatures
are realizable has remained open, and the reason is visible in the shape of what was known:
the sufficient conditions are combinatorial, while the one obstruction, a torus with cones
of angles $3\pi/2$ and $5\pi/2$, is conformal.

We answer the question. A Reduction Lemma shows that the mapping class group acts on
signatures with fixed cone angles with orbits classified by the subgroup $\im\rho\le\Zf$
alone, so at most three cases survive per angle multiset instead of $4^{2g}$. A dictionary
then identifies seamless parametrizations with meromorphic $4$-differentials, under which
$\im\rho$ measures primitivity, and realizability becomes non-emptiness of a stratum of
primitive $k$-differentials with $k=4/d$ and $\im\rho=\gen{d}$. Unwinding this against the
known classification of such strata leaves exactly five exceptional families; every other
admissible signature is realizable, in every genus. Two of the five appear to be new, and
both live in genus two. Four of the five lie outside the gcd condition, and
\S\ref{sec:examples} settles the whole region that condition leaves open.

The non-emptiness half is also made constructive. We exhibit an explicit one-vertex
square-tiled surface in every genus, together with a local surgery that cuts a saddle
connection from a cone to itself and glues in a single square, splitting one cone into two
of prescribed angles while leaving the genus, the other cones and $\im\rho$ alone.
Iterating produces quad meshes of the smallest size the corner count permits. We do not
prove that the iteration always terminates, so the classification cites its non-emptiness
input rather than reproving it.

Two extensions follow. On surfaces with boundary, which is the setting of parametrization
aligned to feature curves, the Reduction Lemma holds verbatim once the boundary turnings
are allowed into the relevant subgroup, and a feature component with an odd number of odd-angle corners
makes the holonomy irrelevant. At a fixed conformal structure the holonomy is not free but
computed, by an Abel--Jacobi condition. That is what explains why cones of odd topological
valence lie beyond methods built on quadratic differentials.
\end{abstract}

\tableofcontents

\section{Introduction}\label{sec:intro}

\subsection{The problem}

Let $M$ be a closed oriented surface of genus $g$ and let $C=\{c_1,\dots,c_n\}\subset M$ be
a finite set of marked points. A \emph{seamless parametrization} of $(M,C)$ is a locally
injective atlas on $M\setminus C$ with values in $\RR^2$ whose transition maps lie in
$\Zf\ltimes\RR^2$, that is, are of the form $z\mapsto i^k z+t$, and whose cone angles at the
points of $C$ are multiples of $\pi/2$. Such an atlas is what a quadrangulation or a
quad-dominant texture chart is built on. The transitions fix the two grid \emph{directions}
up to a quarter turn, so the cross field of axis directions pulls back consistently. The
integer \emph{lattice} does not, since an arbitrary translation $t$ moves it, and aligning
all charts to one $\ZZ^2$ is the separate quantization step of \S\ref{sec:open}.

The topological data of such an atlas is its \emph{holonomy signature}: the cone angles
$\theta_i=(m_i+4)\pi/2$, recorded by the integers $m_i>-4$, together with the rotational
holonomy
\[
  \rho\colon H_1(M\setminus C;\ZZ)\longrightarrow \Zf,
  \qquad \rho(\gamma_i)\equiv m_i \pmod 4,
\]
where $\gamma_i$ is a small loop around $c_i$. Equivalently, $\rho$ is the holonomy of the
cross field of the parametrization. Shen, Zhu, Capouellez, Panozzo, Campen and
Zorin~\cite{shen2022} asked:

\begin{quote}
  \emph{For which holonomy signatures do seamless parametrizations with the corresponding
  topology exist?}
\end{quote}

They proved a sufficient condition of gcd type. In their normalization a cone of angle
$(m+4)\pi/2$ has index $I=-m/4$, and the condition~\cite[Prop.~2]{shen2022} is
$\gcd(I_1,\dots,I_n)=\tfrac14$, that is
\[
  \gcd\nolimits_{\ZZ}(m_1,\dots,m_n)=1 .
\]
Under it every signature is realizable, in every genus, with a single exception: the torus
with exactly two cones, of angles $3\pi/2$ and $5\pi/2$. The condition holds as soon as one
cone has angle $3\pi/2$ or $5\pi/2$, since then some $|m_i|=1$. The authors note that it is
not necessary, and single out ``indices restricted to multiples of $\tfrac12$'', our even
case, as a realistic scenario it does not cover. The exception itself is a theorem of
Izmestiev, Kusner, Rote, Springborn and Sullivan~\cite{ikrss2013}, who proved the equivalent
combinatorial statement that the torus admits no $3,5$-quadrangulation. The signature
formulation and the constructions built on it have since been developed further, for
arbitrary cone configurations in~\cite{campen2019}, in Penner coordinates
in~\cite{penner2024}, and for surfaces with marked feature curves in~\cite{feature2025}, but
always on the sufficient side.

The gap has a definite shape. The known sufficient conditions are combinatorial, resting on
a rerouting argument, while the known obstruction is conformal and algebraic. This paper
closes the gap by working throughout in the language where the obstruction naturally lives.

\subsection{Results}

Write $D=\gen{m_1,\dots,m_n}\le\Zf$ for the subgroup generated by the cone data and $d$ for
the generator of $\im\rho$ in $\{1,2,4\}$, so that $\im\rho=d\Zf$. Since $\rho(\gamma_i)=m_i$,
every $m_i$ lies in $\im\rho$, so $D\subseteq d\Zf$; writing $D=d_D\Zf$ this says $d\mid d_D$.

\medskip\noindent\textbf{(a) The holonomy data collapses.} Theorem~\ref{thm:reduction}, the
Reduction Lemma, states that for $g\ge1$ two signatures with the same genus and the same
cone angles lie in the same $\MCG(M,C)$-orbit, after relabelling the marked points to match
the orders, if and only if they have the same $\im\rho$. Figure~\ref{fig:three} shows what
the three possibilities mean for the cross field. Since realizability is a mapping class
group invariant, the problem depends on at most three cases per angle multiset instead of
$4^{2g}$. Corollary~\ref{cor:odd} is the special case that makes contact with the
literature: if some $m_i$ is odd, in particular if some cone has angle $3\pi/2$ or $5\pi/2$,
then $D=\Zf$, a single orbit remains, and $\rho$ is irrelevant. This explains the shape of
the gcd condition of~\cite{shen2022} from first principles, and it also shows that condition
is more than is needed: coprimality of the $m_i$ is what their rerouting requires, one odd
$m_i$ is what realizability requires.

\medskip\noindent\textbf{(b) Realizability is non-emptiness of a stratum.}
Theorem~\ref{thm:main}: a signature is realizable if and only if the stratum of
\emph{primitive} $(4/d)$-differentials with orders $m_i/d$ on a genus-$g$ surface is
non-empty. The proof runs through the dictionary of \S\ref{sec:dictionary}, in which a
seamless parametrization is a flat cone metric with holonomy in $\Zf$, hence a meromorphic
section $q$ of $K^4$ with a zero or pole of order $m_i$ at $c_i$, and in which $\im\rho$
measures exactly how far $q$ is from being a power.

\medskip\noindent\textbf{(c) The classification.} Unwinding Theorem~\ref{thm:main} with the
known non-emptiness classifications for abelian and quadratic differentials gives
Theorem~\ref{thm:classification}: a signature fails to be realizable precisely in the five
families of Table~\ref{tab:exceptions}, in every genus. For $g\ge1$ the non-emptiness input
is a single cited theorem, (S7), which covers all three values of $k$ at once and whose
genus-one half is reproved here independently; genus $0$ is separate, and is
Proposition~\ref{prop:genus0}. Two of the five are in genus $1$ and are proved here from an
Abel--Jacobi criterion (Proposition~\ref{prop:genus1}); one of them is the exception
of~\cite{shen2022,ikrss2013}, recovered in one line. The other two live in genus $2$ and
appear to be new in this setting: they are the images under the dictionary of the empty
quadratic strata $Q(4)$ and $Q(1,3)$ of~\cite{masur1993}. In the language of cross fields, a
genus-two surface carrying a cross field whose holonomy along every homology loop is a
multiple of $\pi$ cannot be quadrangulated if its singularities are a single cone of angle
$6\pi$, or a pair of cones of angles $3\pi$ and $5\pi$.

\medskip\noindent\textbf{(d) An explicit construction.} \S\ref{sec:construction} makes the
non-emptiness half constructive. We exhibit an explicit square-tiled surface with one cone
in each genus (Lemma~\ref{lem:base}) and a local surgery (Lemma~\ref{lem:split}) that cuts a
saddle connection from a cone to itself and glues in one unit square, splitting a cone of
angle $w\pi/2$ into two of angles $(\gamma+2)\pi/2$ and $(w-\gamma+2)\pi/2$ while leaving
the genus, every other cone and $\im\rho$ untouched. Iterating with backtracking
(Algorithm~\ref{alg:construction}) produces a quad mesh with $2g-2+n$ squares, the minimum
the corner count allows, and each output is a witness whose genus, cone orders and $\im\rho$
can be read back off the gluing alone (Theorem~\ref{thm:construction}). We do not prove that
the iteration always terminates, so this is not by itself a theorem for all genera.

\medskip\noindent\textbf{(e) Feature curves.} \S\ref{sec:boundary} redoes the theory for
compact surfaces with boundary whose boundary must develop to axis-parallel segments, the
setting a feature network cuts a surface into. Gauss--Bonnet gains a corner term and forces
a parity constraint (Lemma~\ref{lem:gbboundary}); the space of signatures is again
$\Zf^{2g}$ (Lemma~\ref{lem:affineboundary}); and the Reduction Lemma holds verbatim with $D$
enlarged by the boundary turnings (Theorem~\ref{thm:reductionboundary}). Consequently a
boundary component of odd turning, equivalently one carrying an odd number of corners of odd
angle, forces $D=\Zf$, so the holonomy along homology loops cannot obstruct anything
(Corollary~\ref{cor:features}).

\medskip\noindent\textbf{(f) Relation to the fixed-structure criterion.} \S\ref{sec:fixed}
shows that at a fixed Riemann surface with fixed cone positions the holonomy is not free at
all but is \emph{computed} by an Abel--Jacobi condition (Proposition~\ref{prop:fixed}), so
the criterion of that line of work~\cite{qmg2,qmg3} is the pointwise condition and
Theorem~\ref{thm:main} asks whether the locus it cuts out is non-empty. The same statement
explains why cones of odd topological valence are inaccessible to methods built on quadratic
differentials (Corollary~\ref{cor:oddvalence}).

\begin{figure}[t]
\centering
\begin{tikzpicture}[scale=1.0,>=Stealth,font=\footnotesize,
  box/.style={rounded corners=2pt,fill=black!3,draw=black!20}]

\begin{scope}
  \draw[box] (-1.55,-0.15) rectangle (1.55,2.05);
  \node at (0,1.78) {$d=4$,\ \ $\im\rho=0$};
  \foreach \i in {0,1,2}\foreach \j in {0,1}{
    \draw[->,thick] ({-0.95+0.95*\i},{0.35+0.75*\j}) -- ++(0.5,0);}
  \node[align=center,font=\scriptsize] at (0,-0.62)
    {a global \emph{vector} field\\abelian differential};
\end{scope}

\begin{scope}[xshift=4.6cm]
  \draw[box] (-1.55,-0.15) rectangle (1.55,2.05);
  \node at (0,1.78) {$d=2$,\ \ $\im\rho=2\Zf$};
  \foreach \i in {0,1,2}\foreach \j in {0,1}{
    \begin{scope}[shift={({-0.95+0.95*\i},{0.35+0.75*\j})}]
      \draw[very thick] (-0.25,0) -- (0.25,0);
      \draw[thick,dashed] (0,-0.25) -- (0,0.25);
    \end{scope}}
  \node[align=center,font=\scriptsize] at (0,-0.62)
    {\emph{two} line fields, globally\\primitive quadratic differential};
\end{scope}

\begin{scope}[xshift=9.2cm]
  \draw[box] (-1.55,-0.15) rectangle (1.55,2.05);
  \node at (0,1.78) {$d=1$,\ \ $\im\rho=\Zf$};
  \foreach \i/\a in {0/0,1/25,2/55}\foreach \j/\b in {0/0,1/40}{
    \begin{scope}[shift={({-0.95+0.95*\i},{0.35+0.75*\j})},rotate=\a+\b]
      \draw[very thick] (-0.25,0) -- (0.25,0);
      \draw[very thick] (0,-0.25) -- (0,0.25);
    \end{scope}}
  \node[align=center,font=\scriptsize] at (0,-0.62)
    {the four prongs are interchangeable\\primitive $4$-differential};
\end{scope}
\end{tikzpicture}
\caption{The three possible values of $d$, the generator of $\im\rho$, and what each
means for the cross field. When $\im\rho=0$ the cross lifts to a global vector field. When
$\im\rho=2\Zf$ it does not, but it still splits globally into two distinguishable line
fields, drawn solid and dashed. When $\im\rho=\Zf$ transport around some loop cyclically
permutes the four prongs and no such splitting exists. By the Reduction Lemma these three
cases are all that matter: for fixed cone angles, $\im\rho$ alone decides the mapping class
group orbit, so realizability is a question about at most three signatures per angle
multiset rather than $4^{2g}$. Families 4 and 5 of Table~\ref{tab:exceptions} live in the
middle column.}
\label{fig:three}
\end{figure}

\subsection{What this adds to the sufficient condition}\label{sec:relation}

Since~\cite{shen2022} is both the source of the question and the only prior general answer,
it is worth saying precisely what changes. Their Proposition~2 is a one-way statement: if
$\gcd_{\ZZ}(m_1,\dots,m_n)=1$ then the signature is realizable, with the single torus
exception. Outside that hypothesis it says nothing, and as the paper itself notes the
condition ``is not necessary''. Theorem~\ref{thm:classification} fills the empty cell:

\begin{center}
\begin{tabular}{@{}lll@{}}
\toprule
 & $\gcd_{\ZZ}(m_i)=1$ & $\gcd_{\ZZ}(m_i)\ne1$ \\
\midrule
\cite[Prop.~2]{shen2022} & realizable; $1$ exception & no statement \\
here & realizable; the same $1$ exception & realizable; $4$ exceptions \\
\bottomrule
\end{tabular}
\end{center}

\noindent
Three things are behind that, and one thing is not improved.

\medskip\noindent\emph{(i) Sufficient becomes necessary and sufficient.} A combinatorial
construction can certify that a signature is realizable; it cannot certify that one is not.
Theorem~\ref{thm:main} converts the question into non-emptiness of a stratum of primitive
$k$-differentials, where the empty strata have been classified, and a complete list follows:
exactly five families fail, in every genus (Table~\ref{tab:exceptions}). One of the five is
the torus exception of~\cite{shen2022,ikrss2013}; the other four are the content of the
right-hand column above. Restricted to $\gcd_{\ZZ}(m_i)=1$ the classification returns their
statement, so nothing is contradicted, though not independently of them, since our positive
half cites (S7) where theirs is constructive.

\medskip\noindent\emph{(ii) Where the two overlap, less is needed.} Their rerouting moves a
loop's turning number by $I_i=-m_i/4$ for each cone it is rerouted around, so the reachable
adjustments are the integer combinations of the $I_i$, and hitting an arbitrary target in
$\tfrac14\ZZ$ forces $\gcd_{\ZZ}(m_i)=1$. It has to be an arbitrary target because the
construction of~\cite{campen2019} that they build on emits one fixed holonomy pattern along
the cut graph, which the signature must be made to match exactly.

The corresponding hypothesis here is weaker, and the reason is the difference between a
total and a residue. A turning number in $\tfrac14\ZZ$ is a count of quarter turns, in which
one full turn and three full turns are different numbers; realizability, by
Lemma~\ref{lem:d2}, depends only on the rotation class in $\Zf$, where they are the same.
Point pushing changes that class by $m_i$, so Corollary~\ref{cor:odd} needs only
$\gen{m_1,\dots,m_n}=\Zf$, one odd $m_i$, with no coprimality. The gap is not hypothetical:
at $g=4$ the cone angles $7\pi/2$ and $25\pi/2$ give $m=(3,21)$, so $\gcd_{\ZZ}=3$ and
Proposition~2 does not apply, while $3$ is odd, so a single mapping class group orbit
remains, $\rho$ is irrelevant, and the signature is realizable on eight squares, the minimum.

\medskip\noindent\emph{(iii) The region left open is decided.} The condition
$\gcd_{\ZZ}(m_i)\ne1$ is larger than it looks. It contains every signature whose cone angles
are all multiples of $\pi$, the case~\cite{shen2022} singles out as realistic and does not
cover; every signature with a single cone, since there $\gcd_{\ZZ}(m)=|m_1|=4(2g-2)$, which
is never $1$; and, by (ii), signatures with an odd cone angle as well.
Corollary~\ref{cor:evenregime} settles all of it at once: realizable except for four
signatures, all in genus $1$ or $2$. \S\ref{sec:examples} works through the cases. The one
to keep in mind is a genus-two surface with a single cone of angle $6\pi$, where the answer
is yes for $\im\rho=\Zf$, no for $2\Zf$, and yes for $0$. Same surface, same singularity,
and the holonomy alone decides. That is also why the reduction from $4^{2g}$ signatures to
three, item~(a) above, is doing real work rather than tidying.

\medskip\noindent\emph{What is not improved.} \cite{shen2022} is an algorithm that runs on
meshes, producing parametrizations whose distortion is then optimized; the present paper
decides existence and, in \S\ref{sec:construction}, builds minimal witnesses on a finite
range without a termination proof. On their own region their result is constructive and
self-contained where ours cites (S7). The two are complementary in the ordinary way: they
answer how to build one, we answer whether there is one to build.

\section{Seamless parametrizations and \texorpdfstring{$4$}{4}-differentials}\label{sec:setup}

\subsection{Conventions}

$M$ is a closed oriented surface of genus $g$, and $C=\{c_1,\dots,c_n\}\subset M$ a set of
distinct marked points, \emph{labelled}: the orders $m=(m_1,\dots,m_n)$ are an ordered
vector attached to $c_1,\dots,c_n$. Accordingly $\MCG(M,C)$ denotes the group of isotopy
classes of orientation-preserving homeomorphisms of $M$ fixing $C$ \emph{pointwise}; it does
not permute the marked points. Relabelling by a permutation $\pi$ of equal-order points is a
separate operation, used only where stated, and it identifies the signature $(m,\rho)$ with
$(m\circ\pi,\rho\circ\pi)$. Statements phrased in terms of the multiset of cone angles are
to be read modulo that identification. We write $\Zf=\ZZ/4\ZZ$ additively and identify it
with the group $\mu_4$ of quarter turns, $1\in\Zf$ being the rotation by $\pi/2$. Subgroups
of $\Zf$ are written $d\Zf$ with $d\in\{1,2,4\}$, so $4\Zf=0$.

\begin{definition}\label{def:signature}
A \emph{holonomy signature} on $(M,C)$ is a pair $s=(m,\rho)$ where
\begin{itemize}[itemsep=1pt]
  \item $m=(m_1,\dots,m_n)$ are integers $m_i>-4$, the \emph{orders}, encoding cone
  angles $\theta_i=(m_i+4)\pi/2$ and subject to Gauss--Bonnet
  $\sum_i m_i=4(2g-2)$;
  \item $\rho\colon H_1(M\setminus C;\ZZ)\to\Zf$ is a homomorphism with
  $\rho(\gamma_i)=m_i\bmod 4$.
\end{itemize}
An order $m_i=0$ is a \emph{marked point}: the metric is smooth there.
\end{definition}

Since $H_1(M\setminus C;\ZZ)\cong\ZZ^{2g}\oplus\big(\bigoplus_i\ZZ\gamma_i\big)/(\sum_i\gamma_i)$
and Gauss--Bonnet gives $\sum_i m_i\equiv0\pmod4$, such a $\rho$ exists, and the set
\[
  A(m)=\{\rho:\rho(\gamma_i)=m_i\ \forall i\}
\]
is a coset of $H^1(M;\Zf)$ inside $H^1(M\setminus C;\Zf)$, the inclusion
$H^1(M;\Zf)\hookrightarrow H^1(M\setminus C;\Zf)$ being injective. Signatures are thus
\emph{characters}, and the space they form is cohomological. Fixing a symplectic basis
$a_1,b_1,\dots,a_g,b_g$ of $H_1(M;\ZZ)$ identifies $H^1(M;\Zf)$ with $\Zf^{2g}$, and
Poincar\'e duality on the closed oriented surface $M$ identifies it with $H_1(M;\Zf)$,
carrying the cup product to the intersection form. We use that identification silently
below: the symplectic group acts on $H^1(M;\Zf)\cong\Zf^{2g}$, while curves, meaning the
loops $\gamma_i$, the $a_j,b_j$, and the arcs along which cones are pushed, live in $H_1$.

A seamless parametrization is \emph{realized on a mesh} if it is piecewise linear with
respect to some refinement of a given triangulation of $M$ whose vertex set contains $C$. A
signature is \emph{realizable} if some seamless parametrization has it. We also use the
following invariants of a signature:
\[
  D=\gen{m_1\bmod4,\dots,m_n\bmod4}\le\Zf,
  \qquad
  \im\rho=d\Zf \ \ (d\in\{1,2,4\}),
  \qquad D\subseteq\im\rho .
\]

\subsection{Quad meshes}\label{sec:mesh}

\begin{remark}[square-tiled surfaces]\label{rem:mesh}
A closed quad mesh built from $N$ unit squares is a seamless parametrization: each square is
a chart, the transitions are rotations by multiples of $\pi/2$ and translations, and a
vertex of valence $v$ is a cone of angle $v\pi/2$, that is of order $m=v-4$. Every corner
belongs to exactly one vertex, so $4N=\sum_i v_i$. Such a mesh is thus an existence
certificate for its own signature, and this is the form in which all constructions in this
paper are given.

We fix the combinatorial model once. The \emph{darts} are the pairs (face, side), encoded as
$4f+s$ with $s\in\{0,1,2,3\}$: dart $4f+s$ is side $s$ of square $f$, traversed
counterclockwise. Then $\sigma(4f+s)=4f+(s+1\bmod4)$ is the face rotation, the gluing is a
fixed-point-free involution $\alpha$ on darts, and the vertices are the orbits of
$\nu=\alpha\circ\sigma^{-1}$, the orbit through a dart consisting of the darts whose tail is
that vertex, so that its length is the valence. Orienting side $s$ so that its outward
normal points in direction $s$, gluing side $s$ of $f$ to side $t$ of $h$ makes the chart
transition from $f$ to $h$ the rotation
\[
  r=(t-s+2)\bmod4 ,
\]
which is antisymmetric under swapping the two darts, as it must be. In particular a gluing
of two \emph{opposite} sides, $t=s+2$, is a translation, $r=0$, while a gluing of a side to
itself in reverse is a half-turn. Finally $\rho$ is read off the dual graph: the dual
$1$-skeleton is a deformation retract of $M$ minus the vertices, so its cycles generate
$H_1(M\setminus C)$, and $\rho$ of a cycle is the sum of the rotations it crosses.
\end{remark}

\subsection{Results used as black boxes}\label{sec:blackbox}

The following are used without proof. For $g=0$ the classification uses Troyanov (S6),
through Proposition~\ref{prop:genus0}; for $g\ge1$ it uses the Gendron--Tahar classification
of primitive $k$-differential strata, (S7). Beyond routine topology those are the only two
that matter.

\begin{description}[leftmargin=2.2em,itemsep=1pt]
  \item[(S1)] A flat cone metric on a closed surface admits a geodesic triangulation
  whose vertex set contains the cone points.
  \item[(S2)] Two triangulations of a compact surface admit a common refinement.
  \item[(S3)] $\Sp(2g,\ZZ)$ acts transitively on primitive vectors of $\ZZ^{2g}$.
  \item[(S4)] $\Sp(2g,\ZZ)\to\Sp(2g,\ZZ/N)$ is surjective.
  \item[(S5)] $\MCG(\Sigma_{g,1},\partial)\to\Sp(2g,\ZZ)$ is surjective.
  \item[(S6)] (Troyanov~\cite{troyanov1991}) On a closed surface, for $n\ge3$ distinct
  points and angles $\theta_i>0$ satisfying Gauss--Bonnet there is a flat cone metric
  with those angles at those points.
  \item[(S7)] (Gendron--Tahar~\cite[Th.~1.4]{gt2020}) Let $g\ge1$, let $k\ge1$, and let
  $\mu=(a_1,\dots,a_n)$ be a partition of $k(2g-2)$ with every $a_i>-k$. The stratum
  $\Omega^k\mathcal M_g(\mu)$ of \emph{primitive} $k$-differentials of profile $\mu$,
  those that are not powers of $k'$-differentials for $1\le k'<k$, is empty if and only
  if
  \begin{enumerate}[label=\emph{(\roman*)},itemsep=0pt,topsep=2pt]
    \item $g=1$ and $\mu=(1,-1)$;
    \item $g=1$ and $\mu=\emptyset$ and $k\ge2$;
    \item $g=2$, $k=2$ and $\mu=(4)$ or $\mu=(3,1)$.
  \end{enumerate}
\end{description}

The hypothesis $g\ge1$ is the scope of the section of~\cite{gt2020} in which Theorem~1.4
appears, and it cannot be dropped. For $g=0$, $k=4$ and $\mu=(-2,-2,-2,-2)$, which is
admissible since $\sum_ia_i=-8=k(2g-2)$ and $a_i>-4$, the only $4$-differential with that
divisor on $\PP^1$ is $c\,(dz)^4/\prod_i(z-p_i)^2$, the square of
$\sqrt c\,(dz)^2/\prod_i(z-p_i)$, so that stratum is empty although it matches no case
above. Genus $0$ is handled separately, by Proposition~\ref{prop:genus0}.

For $g\ge1$ this one statement supplies every non-emptiness input the classification needs,
for all three values of $k=4/d$ at once. Its case (iii) for $k=2$ is the classical theorem of
Masur and Smillie~\cite{masur1993}, and its cases (i) and (ii) are reproved independently
here as Proposition~\ref{prop:genus1}. What we take purely on trust is that the list stops
there.

\subsection{The dictionary}\label{sec:dictionary}

This subsection proves the three correspondences summarized in
Figure~\ref{fig:dictionary}.

\begin{figure}[t]
\centering
\begin{tikzpicture}[font=\footnotesize,>=Stealth,
  bx/.style={draw,thick,rounded corners=3pt,fill=black!3,align=center,
             minimum width=3.5cm,minimum height=1.15cm,inner sep=4pt}]

\node[bx] (A) at (0,0) {\textbf{seamless parametrization}\\ atlas on $M\setminus C$,\\
  transitions $z\mapsto i^kz+t$};
\node[bx] (B) at (5.3,0) {\textbf{flat $\Zf$ cone metric}\\ cone angles in $(\pi/2)\ZZ$,\\
  holonomy in $\Zf$};
\node[bx] (C) at (10.6,0) {\textbf{meromorphic $4$-differential}\\ $(X,q)$ on a Riemann\\
  surface $X$};

\draw[<->,thick] (A) -- node[above,font=\scriptsize] {Lem.~\ref{lem:d1}} (B);
\draw[<->,thick] (B) -- node[above,font=\scriptsize] {Lem.~\ref{lem:d3}} (C);

\node[font=\scriptsize,align=center] at (0,-1.35)
  {cone angle $\theta_i=(m_i+4)\frac\pi2$\\ rotational holonomy $\rho$};
\node[font=\scriptsize,align=center] at (5.3,-1.35)
  {$|q|^{1/2}$ is the metric\\ $\rho$ is its holonomy};
\node[font=\scriptsize,align=center] at (10.6,-1.35)
  {$\divisor(q)=\sum_i m_ic_i$\\ $\im\rho\subseteq d\Zf\iff q=\eta^{d}$};

\draw[decorate,decoration={brace,amplitude=4pt,mirror}] (-1.85,-2.1) -- (12.45,-2.1);
\node[align=center] at (5.3,-2.75)
  {\textbf{Theorem~\ref{thm:main}:}\ \ $s$ is realizable
   $\iff$ the stratum of primitive $(4/d)$-differentials\\
   with orders $m_i/d$ on a genus-$g$ surface is non-empty};
\end{tikzpicture}
\caption{The correspondences of \S\ref{sec:dictionary}. Reading left to right, a seamless
parametrization is a flat cone metric with holonomy in $\Zf$ (Lemma~\ref{lem:d1}), and such
a metric is $|q|^{1/2}$ for a meromorphic $4$-differential $q$ whose zeros and poles are the
cones (Lemma~\ref{lem:d3}). Under this correspondence $\im\rho$ measures primitivity: $q$ is
a $d$-th power exactly when $\im\rho\subseteq d\Zf$, so $\im\rho=d\Zf$ says that $q=\eta^{d}$
with $\eta$ \emph{primitive} (Lemma~\ref{lem:d4}). Combined with the Reduction Lemma this
turns the question into one about strata, which \S\ref{sec:classification} answers.}
\label{fig:dictionary}
\end{figure}

\begin{lemma}[Seamless parametrization $=$ flat $\Zf$ cone metric]\label{lem:d1}
Seamless parametrizations of $(M,C)$ are the same thing as flat cone metrics on $M$
with cone points contained in $C$, cone angles in $(\pi/2)\ZZ$, and rotational
holonomy contained in $\Zf$. The signature corresponds on both sides.
\end{lemma}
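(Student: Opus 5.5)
The plan is to build the two directions explicitly and then check that they are mutually inverse and that the signature is preserved.

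\emph{From atlas to metric.} Since the transition maps $z\mapsto i^kz+t$ are Euclidean isometries, pulling back $|dz|^2$ from each chart gives a well-defined flat Riemannian metric on $M\setminus C$. Near each $c_i$ the metric is a flat cone; this is exactly the sense in which the definition speaks of a cone angle $\theta_i=(m_i+4)\pi/2$ at $c_i$. The metric therefore extends to a flat cone metric on $M$ whose cone points lie in $C$. A point with $m_i=0$ is smooth and is only marked. For the holonomy, parallel transport of a tangent vector along a loop is computed by developing it through the chain of charts the loop meets. Each chart change contributes the linear part $i^k$ of its transition, so the linear holonomy is the product of these factors and lies in $\mu_4\cong\Zf$. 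The product is also the sum of the quarter-turn counts that define $\rho$, so the metric holonomy is $\rho$.

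\emph{From metric to atlas.} Cover $M\setminus C$ by simply connected open sets $U$ and develop the metric on each by an isometric immersion $U\to\RR^2$; developing on a simply connected set needs only that the metric be flat. Two developments of the same connected set differ by an orientation-preserving isometry $z\mapsto e^{i\phi}z+t$. To force $e^{i\phi}\in\mu_4$, fix a base point and a unit vector there, and transport it to every point along paths. Because the holonomy lies in $\Zf$, the result is well defined up to quarter turns, so it gives a cross field $\{v,iv,-v,-iv\}$ that is parallel on $M\setminus C$. Normalize each chart so that it maps the cross field to the coordinate axes. On overlaps the transition then preserves the axis cross, so its rotational part is a multiple of $\pi/2$, and the transition lies in $\Zf\ltimes\RR^2$. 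Local injectivity holds because developing maps are local isometries. The cone angles are multiples of $\pi/2$ by hypothesis.

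Applying the second construction to the metric produced by the first recovers the original atlas up to composing charts with a global element of $\Zf\ltimes\RR^2$. In the other order, the metric is reproduced exactly. So the correspondence is a bijection, modulo the evident equivalence of atlases.

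It remains to check that the signature matches on both sides. The orders $m_i$ are read off the cone angles on both sides. The value $\rho(\gamma_i)\equiv m_i$ on a small loop around $c_i$ should be verified against the metric holonomy, which around a cone of angle $\theta_i$ is rotation by $\theta_i-2\pi=m_i\pi/2$.

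The main obstacle is a sign and orientation convention rather than anything substantive. The map $\rho$ is defined from transition rotations, while metric holonomy is defined by parallel transport. These can differ by inversion, or by the choice of the direction of $\gamma_i$. I would fix the conventions using the local model: near $c_i$, glue $(m_i+4)$ quarter-planes by translations except for a single seam carrying the rotation $i^{m_i}$. This pins the sign so that both sides give $+m_i$. Because everything else is additive on $H_1(M\setminus C)$, the general statement follows from this local check together with functoriality along the chart chains.
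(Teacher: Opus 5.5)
Your proposal is correct and follows the paper's proof: pull back the Euclidean metric and read the holonomy off composite transitions in one direction, and develop on simply connected subsets of $M\setminus C$ in the other. Your parallel cross field makes explicit the paper's terse step that the transitions ``may be taken'' in $\Zf\ltimes\RR^2$. The sign issue you flag is one the paper resolves by convention, taking the holonomy around $\gamma_i$ to be the rotation by $\theta_i$ of the composite transition.
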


\begin{proof}
Given a seamless atlas, pull back the euclidean metric; this is well defined since the
transitions $z\mapsto i^kz+t$ are isometries. Parallel transport along a loop $\gamma$ is
the rotation part $i^{k(\gamma)}$ of the composite transition, and rotation parts multiply
in the abelian group $\mu_4$, so $\gamma\mapsto k(\gamma)$ is the rotational holonomy
$H_1(M\setminus C;\ZZ)\to\Zf$.

Conversely, developing maps on simply connected subsets of $M\setminus C$ give an atlas
whose transitions are orientation-preserving euclidean isometries and are local isometries,
hence locally injective. The rotation part around any loop is the rotational holonomy, which
lies in $\Zf$ by hypothesis, so on a good cover the transitions may be taken of the form
$z\mapsto i^kz+t$. The holonomy around $\gamma_i$ is rotation by $\theta_i$, forcing
$\rho(\gamma_i)=m_i\bmod4$ as in Definition~\ref{def:signature}.
\end{proof}

\begin{lemma}[Realizability is topological]\label{lem:d2}
\emph{(a)} The set of realizable signatures is invariant under $\MCG(M,C)$.
\emph{(b)} A signature $s$ is realizable if and only if there is a flat cone metric
on $M$ with cone angles $(m_i+4)\pi/2$ at $n$ distinct points whose signature lies in
the $\MCG(M,C)$-orbit of $s$.
\end{lemma}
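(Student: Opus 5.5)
Both parts rest on transporting a seamless parametrization along a homeomorphism, with Lemma~\ref{lem:d1} as the bridge between atlases and flat $\Zf$ cone metrics.

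For (a), let $s=(m,\rho)$ be realized by a seamless atlas $\mathcal A$ on $(M,C)$, and let $\phi$ be an orientation-preserving homeomorphism of $M$ fixing $C$ pointwise. The pulled-back atlas $\phi^*\mathcal A=\{f\circ\phi\}$ is again locally injective on $M\setminus C$. Its transitions are those of $\mathcal A$, hence of the form $z\mapsto i^kz+t$. Since $\phi$ fixes each $c_i$ and is a local homeomorphism, the cone angle of $\phi^*\mathcal A$ at $c_i$ equals that of $\mathcal A$ at $c_i$, so the orders $m$ are unchanged. The rotation part of the composite transition along a loop $\gamma$ for $\phi^*\mathcal A$ equals the rotation part along $\phi_*\gamma$ for $\mathcal A$. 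Hence the holonomy is $\rho\circ\phi_*$, which is the action of $[\phi]$ on signatures. Isotopic homeomorphisms induce the same map on $H_1(M\setminus C)$, so the action is well defined on $\MCG(M,C)$. Thus the orbit of a realizable signature consists of realizable signatures, using $\phi^{-1}$ to cover both directions of the action.

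For (b), the forward direction is immediate from Lemma~\ref{lem:d1}: a realization of $s$ is a flat cone metric with those angles at $C$, and its signature is $s$ itself. For the converse, suppose a flat cone metric $\mu$ lives on $M$ with the right angles at distinct points $p_1,\dots,p_n$ and holonomy in $\Zf$. Here I read the hypothesis as saying that, after choosing a homeomorphism $h\colon M\to M$ with $h(c_i)=p_i$ (which exists since the homeomorphism group of a connected surface is $n$-transitive), the transported signature of $h^*\mu$ on $(M,C)$ lies in the $\MCG(M,C)$-orbit of $s$. Different choices of $h$ differ by an element of the homeomorphisms fixing $C$, so membership in the orbit does not depend on the choice. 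By Lemma~\ref{lem:d1}, $h^*\mu$ is a seamless parametrization of $(M,C)$ whose signature $s'$ lies in the orbit of $s$. Part (a) then shows that $s$ is realizable. The cone points of $\mu$ must be contained in $C$, and a point of angle $2\pi$, with order $0$, is permitted as a marked point, so no extra care is needed there.

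The only delicate step is making the identification of signatures across different point sets precise: which homeomorphism is used, and the fact that it must match labels $c_i\mapsto p_i$ so that orders correspond. I would state explicitly that $h$ respects labels, and that changing $h$ changes the signature by an element of $\MCG(M,C)$. This is exactly why (b) is phrased in terms of orbits. Everything else reduces to checking that pullback by a homeomorphism preserves transition form, cone angles, and transforms $\rho$ by $\phi_*$.
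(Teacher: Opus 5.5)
Your proof is correct and follows the paper's route: for (a) you pull the atlas back along a representative of the mapping class, and for (b) you use Lemma~\ref{lem:d1} in both directions, transport the metric by a homeomorphism $c_i\mapsto p_i$, and finish with (a). The paper additionally makes the atlas piecewise linear on a refinement of a given triangulation, using PL representatives of mapping classes together with (S1) and (S2); that step is not needed for realizability as defined. Separately, you should take $h$ orientation-preserving, so that $h^{-1}h'$ genuinely lies in $\MCG(M,C)$.
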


\begin{proof}
(a) If $\varphi\in\MCG(M,C)$ and $\{U_a,f_a\}$ realizes $s$, then
$\{\varphi^{-1}(U_a),f_a\circ\varphi\}$ is again a seamless atlas with the same transitions,
and its signature is $\varphi^*s$. Any mapping class is represented by a PL homeomorphism,
so the atlas is again PL on a refinement of a triangulation.

(b) Necessity is Lemma~\ref{lem:d1}. For sufficiency, let $\mu$ be a flat cone metric with
cone points $p_1,\dots,p_n$ and signature in the orbit of $s$. A homeomorphism $\psi$ with
$\psi(c_i)=p_{\sigma(i)}$ matching orders makes $\psi^*\mu$ have cone set exactly $C$, and
by (a) we may assume its signature is $s$; Lemma~\ref{lem:d1} then gives a seamless atlas.
To make it PL on a refinement of the given triangulation $T$: by (S1) the metric has a
geodesic triangulation $T''$ containing the cone points among its vertices, and the
developing charts are affine on each of its triangles; by (S2) a common refinement of $T$
and $T''$ works.
\end{proof}

Two consequences are worth isolating, because they are what makes the rest possible: the
conformal structure is free, and so are the cone positions. This is the precise sense in
which the question differs from the fixed-conformal-structure criterion discussed in
\S\ref{sec:fixed}.

\begin{lemma}[Flat $\Zf$ cone metric $=$ meromorphic $4$-differential]\label{lem:d3}
Flat cone metrics on $M$ with cone angles in $(\pi/2)\ZZ$ and rotational holonomy in
$\Zf$ are the same thing as pairs $(X,q)$ with $X$ a Riemann surface homeomorphic to
$M$ and $q$ a meromorphic section of $K_X^{4}$ with
$\divisor(q)=\sum_i m_ic_i$, $m_i>-4$. The metric is $|q|^{1/2}$, the cone angle at
$c_i$ is $(m_i+4)\pi/2$, and the rotational holonomy is the monodromy of the local
fourth roots of $q$.
\end{lemma}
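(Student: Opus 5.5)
Both directions of Lemma~\ref{lem:d3} are local constructions that glue because the transition data lies in $\mu_4$, so I would treat them separately and then check the singularities at the cones.

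\emph{From metric to differential.} Start from a flat cone metric with holonomy in $\Zf$. By Lemma~\ref{lem:d1} it comes from an atlas $\{U_a,z_a\}$ on $M\setminus C$ with transitions $z_b=i^{k}z_a+t$. These are holomorphic, so they define a complex structure $X^\circ$ on $M\setminus C$. On each chart take the local differential $dz_a^4$. Since $dz_b=i^k\,dz_a$ and $(i^k)^4=1$, we get $dz_b^4=dz_a^4$. The local pieces therefore glue to a nowhere-vanishing holomorphic section $q$ of $K^4_{X^\circ}$, with $|q|^{1/2}=|dz_a|$ equal to the metric. Each local fourth root $dz_a$ is carried around a loop $\gamma$ to $i^{k(\gamma)}dz_a$, so the monodromy of the fourth roots is exactly $\rho$.

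\emph{Extension over the cones.} This is the step needing care. Near $c_i$ the cone of angle $\theta=(m+4)\pi/2$ is isometric to a standard model. Let $w$ be a uniformizing coordinate on a punctured disc around $c_i$. The map $\zeta=w^{(m+4)/4}$, suitably normalized, develops the model isometrically onto a sector of angle $\theta$. It is multivalued, but only up to multiplication by $i^{m+4}=i^{m}$, and this matches the holonomy $\rho(\gamma_i)=m_i$. Hence $d\zeta^4=c\,w^{m}\,dw^4$ is single-valued. So the removable puncture extends the conformal structure to a closed Riemann surface $X$, and $q$ extends meromorphically with order $m_i>-4$ at $c_i$. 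The condition $m_i>-4$ is exactly $\theta_i>0$.

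\emph{From differential to metric.} Conversely, given $(X,q)$ with $\divisor(q)=\sum m_ic_i$, take a simply connected $U\subset X\setminus C$. On $U$ choose a branch $\omega$ of $q^{1/4}$ and set $z=\int\omega$. This is a local biholomorphism with $dz^4=q$. Another branch or base point changes $z$ to $i^kz+t$, so we obtain a flat atlas with transitions in $\Zf\ltimes\RR^2$ and metric $|q|^{1/2}$. Its holonomy is the fourth-root monodromy. In the local normal form $q=w^{m}dw^4$ at a zero or pole, $\int w^{m/4}dw\propto w^{(m+4)/4}$. This exhibits a cone of angle $2\pi(m+4)/4=(m+4)\pi/2$, which closes the correspondence.

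\emph{Main obstacle.} The hard step is justifying the local normal form $q=w^m dw^4$ at a cone. A meromorphic $q=w^m h(w)\,dw^4$ with $h(0)\ne0$ can be brought to this form by the change of coordinate $w\mapsto w\,h^{1/(m+4)}$. This works because $m+4\ne0$, and the root $h^{1/(m+4)}$ exists on a small disc. I would verify this coordinate change explicitly, since it is also what guarantees that the removable singularity in the first direction is genuinely removable.
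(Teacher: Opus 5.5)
Your argument follows the paper's: you glue the local $(dz_a)^4$ using $i^{4k}=1$, extend across each cone through the model developing map $z\propto w^{(m+4)/4}$, and go back by integrating a branch of $q^{1/4}$, with the fourth-root monodromy giving $\rho$. The one step you add beyond the paper, the normal form in your last paragraph, fails as stated. With $u=w\,h^{1/(m+4)}$ you do get $u^{m+4}=w^{m+4}h$, but differentiating gives
\[
  u^m\,du^4 = w^m\,h^{-3}\Bigl(h+\tfrac{w\,h'}{m+4}\Bigr)^{4}dw^4 ,
\]
which equals $w^m h\,dw^4$ only if $h'\equiv0$. Already for $m=0$ you would need $d(wh^{1/4})=h^{1/4}\,dw$, which is false. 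Your substitution agrees with the correct coordinate only to first order in $w$.

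The fix is to integrate first. Take a holomorphic branch of $h^{1/4}$ near $0$. Since $m/4>-1$, term-by-term integration gives $\int_0^w s^{m/4}h(s)^{1/4}\,ds=w^{(m+4)/4}G(w)$, with $G$ holomorphic and $G(0)=\tfrac{4}{m+4}h(0)^{1/4}\neq0$. No logarithm appears, because $j+\tfrac{m+4}{4}\neq0$ for all $j\ge0$. Then $u=w\bigl(\tfrac{m+4}{4}G(w)\bigr)^{4/(m+4)}$ is a holomorphic coordinate with $u^m\,du^4=q$.

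In fact you do not need the normal form at all. The expansion $z=\int q^{1/4}=w^{(m+4)/4}G(w)$ with $G(0)\neq0$ already exhibits a cone of angle $(m+4)\pi/2$. That is all the converse needs, and the paper asserts those cone angles without further comment.

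The normal form is also not what makes the puncture removable in the first direction. There $w$ is defined from the cone model by $w=\zeta^{4/(m+4)}$, up to normalization. So the punctured cone is conformally a punctured disc and $q=c\,w^m\,dw^4$ holds exactly, as in the paper.
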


\begin{proof}
Let $\mu$ be such a metric, with developing charts $z_a$ and transitions $z_b=i^kz_a+t$.
Then $(dz_b)^4=i^{4k}(dz_a)^4=(dz_a)^4$, so the local $4$-differentials $(dz_a)^4$ agree on
overlaps and define a nowhere-zero holomorphic section $q$ of $K^4$ on $M\setminus C$ with
$|q|^{1/2}=\mu$. Near a cone of angle $(m+4)\pi/2$ the metric is isometric to the standard
cone, developed in a holomorphic coordinate $w$ by $z=\tfrac{4}{m+4}w^{(m+4)/4}$; hence
\[
  q=(dz)^4=w^{m}(dw)^4 ,
\]
so $q$ extends meromorphically across $c_i$ with order exactly $m_i$, and $m_i>-4$ is
precisely positivity of the cone angle. Since $\deg K_X^4=4(2g-2)$, the degree of
$\divisor(q)$ reproduces Gauss--Bonnet. Conversely, given $(X,q)$, the metric $|q|^{1/2}$ is
flat away from $\divisor(q)$ with the stated cone angles; a local primitive
$z=\int q^{1/4}$ is a developing chart, two choices differ by $z\mapsto i^kz+t$, and so the
rotational holonomy is the monodromy of $q^{1/4}$ and lies in $\mu_4=\Zf$.
\end{proof}

Call a $k$-differential \emph{primitive} if it is not an $e$-th power for any $e>1$ dividing
$k$; every abelian differential is primitive.

\begin{lemma}[Primitivity is measured by $\im\rho$]\label{lem:d4}
Let $q$ be as in Lemma~\ref{lem:d3}, with rotational holonomy $\rho$, and let
$e\mid 4$. Then $q=\eta^{e}$ for a meromorphic section $\eta$ of $K^{4/e}$ if and only
if $\im\rho\subseteq e\Zf$. Consequently, if $d$ generates $\im\rho$ then $q=\eta^{d}$
for a primitive $(4/d)$-differential $\eta$ with $\divisor(\eta)=\sum_i (m_i/d)c_i$;
and conversely $\eta^{d}$ has rotational holonomy with image exactly $d\Zf$.
\end{lemma}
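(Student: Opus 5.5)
The argument is a covering-space construction. The rotational holonomy $\rho$ is the monodromy of the local fourth roots of $q$ (Lemma~\ref{lem:d3}), so $\eta$ can be built by choosing branches of $q^{1/e}$ coherently.

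For the forward direction, suppose $q=\eta^e$ with $\eta$ a meromorphic section of $K^{4/e}$. On a simply connected set $U\subset M\setminus C$, pick a local fourth root $\omega=q^{1/4}$. Its $(4/e)$-th power $\omega^{4/e}$ is a local $e$-th root of $q$, so it equals $\zeta\eta$ for some $e$-th root of unity $\zeta$. We continue $\omega$ along a loop $\gamma$ and obtain $i^{\rho(\gamma)}\omega$. Since $\eta$ is single-valued, $\omega^{4/e}$ must return to itself. This forces $i^{\rho(\gamma)\cdot 4/e}=1$, so $\rho(\gamma)\cdot(4/e)\equiv0\pmod4$, i.e.\ $e\mid\rho(\gamma)$. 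Hence $\im\rho\subseteq e\Zf$.

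For the converse, assume $\im\rho\subseteq e\Zf$. Define $\eta=\omega^{4/e}$ locally on $M\setminus C$. Continuation along $\gamma$ multiplies it by $i^{\rho(\gamma)4/e}=1$, so $\eta$ is a well-defined holomorphic, nowhere-zero section of $K^{4/e}$ on $M\setminus C$, and $\eta^e=\omega^4=q$. It remains to extend $\eta$ across the cones. Near $c_i$, Lemma~\ref{lem:d3} gives $q=w^{m_i}(dw)^4$, so $\eta=\epsilon\,w^{m_i/e}(dw)^{4/e}$ for a root of unity $\epsilon$. Single-valuedness around $\gamma_i$ forces $e\mid m_i$, which is consistent with $m_i=\rho(\gamma_i)\in e\Zf$. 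Thus $\eta$ is meromorphic with $\divisor(\eta)=\sum_i (m_i/e)c_i$.

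For the consequences, take $e=d$, where $d$ generates $\im\rho$. The converse step gives $q=\eta^d$ with $\divisor(\eta)=\sum(m_i/d)c_i$. To see that $\eta$ is primitive, suppose $\eta=\xi^{f}$ with $f>1$ and $f\mid 4/d$. Then $q=\xi^{df}$, and the forward direction gives $\im\rho\subseteq df\Zf\subsetneq d\Zf$, a contradiction.

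For the final claim, let $\eta$ be a primitive $(4/d)$-differential and $q=\eta^d$. Then $\im\rho\subseteq d\Zf$ by the forward direction. If the image were strictly smaller, it would be $d'\Zf$ with $d\mid d'$ and $d'>d$, and the converse step would give $q=\xi^{d'}$. Comparing $\eta^d$ and $(\xi^{d'/d})^d$, they differ by a $d$-th root of unity, which is a global constant on connected $M\setminus C$. Absorbing that constant into $\xi$ makes $\eta$ a $(d'/d)$-th power, contradicting primitivity. So the image is exactly $d\Zf$.

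The only delicate point is this last comparison. One has to check that two $d$-th roots of the same $q$ differ by a constant root of unity, which holds by connectedness of $M\setminus C$, and that the constant can be absorbed into $\xi$, which works since every root of unity has all roots.
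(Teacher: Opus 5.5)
Your proof is correct and follows the paper's route. Both arguments identify $\rho$ with the monodromy of the local fourth roots $q^{1/4}$, and both observe that $(q^{1/4})^{4/e}$ is single valued exactly when $(4/e)\rho(\gamma)=0$ in $\Zf$, i.e.\ $\rho(\gamma)\in e\Zf$. The only variation is in the consequences: the paper reads primitivity off the holonomy $\rho_\eta$ of $\eta$ itself via $\rho=d\cdot\rho_\eta$, whereas you argue by contradiction from the first claim applied to $q$, comparing two $d$-th roots of $q$ up to a constant (and you make explicit the extension across the cones, which the paper leaves implicit).
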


\begin{proof}
Work locally, $q=f\,(dz)^4$. A fourth root of $q$ is $f^{1/4}dz$, and by Lemma~\ref{lem:d3}
the holonomy $\rho$ is the monodromy character of the multivalued $f^{1/4}$, valued in
$\mu_4$. An $e$-th root of $q$ is $h(dz)^{4/e}$ with $h^e=f$, that is
$h=f^{1/e}=(f^{1/4})^{4/e}$, which is single valued if and only if $\rho(\gamma)^{4/e}=1$
for all $\gamma$; additively this reads $(4/e)\rho(\gamma)=0$, that is
$\rho(\gamma)\in e\Zf$. This also forces $e\mid m_i$, since $\rho(\gamma_i)=m_i\bmod 4$ and
$e\mid4$.

Now let $d$ generate $\im\rho$. By the above there is $\eta$ with $\eta^d=q$ and
$\divisor(\eta)=\divisor(q)/d$. Its own holonomy $\rho_\eta\colon H_1\to\ZZ_{4/d}$ satisfies
$\rho=d\cdot\rho_\eta$ under $\ZZ_{4/d}\cong d\Zf$, so $\im\rho_\eta$ is all of
$\ZZ_{4/d}$; by the first claim applied to $\eta$ this says exactly that $\eta$ has no
$e$-th root for $e>1$ dividing $4/d$. The converse is the same computation read backwards.
\end{proof}

\section{The classification}\label{sec:classification}

\subsection{The mapping class group action}\label{sec:reduction}

Throughout this subsection $g\ge1$ and the orders $m$ are fixed. Recall that $A(m)$ is a
coset of $H^1(M;\Zf)\cong\Zf^{2g}$ and that $D=\gen{m_1,\dots,m_n}$.

\begin{figure}[t]
\centering
\begin{tikzpicture}[font=\footnotesize,>=Stealth,scale=1.0,
  cone/.style={circle,fill=black,inner sep=1.7pt},
  ghost/.style={circle,draw,thick,fill=white,inner sep=1.5pt}]

\newcommand{\srf}{%
  \draw[thick,fill=black!4,rounded corners=16pt] (-1.9,-0.85) rectangle (1.9,0.85);
  \draw[thick,fill=white] (-0.85,0) ellipse (0.36 and 0.22);
  \draw[thick,fill=white] ( 0.85,0) ellipse (0.36 and 0.22);}

\begin{scope}[scale=1.15]
  \srf
  \draw[dashed,thick,gray] (-0.85,0) ellipse (0.62 and 0.48);
  \node[gray] at (-1.62,0.52) {$\alpha$};
  \node[cone,label={[inner sep=1.5pt]below:$p$}] at (-0.85,-0.48) {};
  \draw[->,very thick] (-0.32,-0.30) arc (-38:198:0.58 and 0.46);
  \node[align=center,font=\scriptsize] at (0,-1.55)
    {push the cone $p$ once around $\alpha$};
\end{scope}

\begin{scope}[xshift=7.2cm]
  \node[align=center] at (0,0.45)
    {$P(\alpha)^{*}\rho \;=\; \rho \;+\; m_p\,\langle\,\cdot\,,\alpha\rangle$};
  \node[align=center,font=\scriptsize] at (0,-0.45)
    {$\rho$ is unchanged on every $\gamma_i$,\\
     and shifts by $m_p$ along each loop meeting $\alpha$ once};
  \node[align=center,font=\scriptsize] at (0,-1.55)
    {so pushes generate translations by $D\cdot H^1(M;\Zf)$};
\end{scope}
\end{tikzpicture}
\caption{Point pushing (Lemma~\ref{lem:push}). Dragging a cone of order $m_p$ once around a
simple closed curve $\alpha$ is a mapping class fixing $C$ pointwise, and it changes the
holonomy by $m_p$ times the intersection pairing with $\alpha$. The loops $\gamma_i$ around
the cones are left alone, so the cone angles do not move. Ranging over $p$ and $\alpha$,
these pushes translate $A(m)$ by the subgroup $D\cdot H^1(M;\Zf)$, where
$D=\gen{m_1,\dots,m_n}$. When some $m_p$ is odd we have $D=\Zf$ and the pushes alone act
transitively, which is Corollary~\ref{cor:odd}.}
\label{fig:push}
\end{figure}

\begin{lemma}[Point pushing]\label{lem:push}
Let $p\in C$ and let $\alpha\in H_1(M;\ZZ)$ be represented by a simple closed curve
through $p$, and let $P(\alpha)\in\MCG(M,C)$ be the corresponding point-pushing map.
Then
\[
  P(\alpha)_*x=x+\langle x,\alpha\rangle\gamma_p
  \quad\text{for } x\in H_1(M\setminus C;\ZZ),
  \qquad\text{hence}\qquad
  P(\alpha)^*\rho=\rho+m_p\langle\cdot,\alpha\rangle .
\]
As $p$ and $\alpha$ vary these moves generate exactly the translations of $A(m)$ by
$D\cdot H^1(M;\Zf)$, and they fix every puncture value. See Figure~\ref{fig:push}.
\end{lemma}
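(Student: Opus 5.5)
The proof has three steps: compute the action of a single push on $H_1(M\setminus C;\ZZ)$, dualize it to an action on $\rho$, and then identify the group that the pushes generate.

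\emph{Step 1: the push formula.} Represent $\alpha$ by a simple closed curve through $p$, and let $\alpha_\pm$ be its two push-offs into $M\setminus C$, on the left and right of $\alpha$. They bound an annulus containing $p$, so $\alpha_+-\alpha_-=\pm\gamma_p$ in $H_1(M\setminus C)$. The standard description of the point-pushing map is $P(\alpha)=T_{\alpha_+}T_{\alpha_-}^{-1}$. Apply the Picard--Lefschetz formula $T_c(x)=x+\langle x,c\rangle c$ to each factor. Since $\alpha_\pm$ both project to $\alpha$ in $H_1(M)$, their intersection numbers with any $x$ agree; they equal $\langle x,\alpha\rangle$ computed in $M$, because $\gamma_p$ is null-homologous in $M$ and pairs trivially with closed classes. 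The result is
\[
P(\alpha)_*x=x+\langle x,\alpha\rangle(\alpha_+-\alpha_-)=x+\langle x,\alpha\rangle\gamma_p .
\]
The sign convention for the orientation of $\gamma_p$ can be absorbed by replacing $\alpha$ with $-\alpha$. In particular every $\gamma_i$ is fixed, because $\langle\gamma_i,\alpha\rangle=0$.

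\emph{Step 2: dualize.} The action on characters is $\rho\mapsto\rho\circ P(\alpha)_*$. By Step 1 this is
\[
\rho(x)+\langle x,\alpha\rangle\rho(\gamma_p)=\rho(x)+m_p\langle x,\alpha\rangle .
\]
The added term $m_p\langle\cdot,\alpha\rangle$ factors through $H_1(M)$, so it lies in $H^1(M;\Zf)$, and the puncture values are unchanged. Hence $P(\alpha)^*$ preserves $A(m)$ and acts on it by translation.

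\emph{Step 3: the generated group.} As $\alpha$ ranges over simple closed curves, the classes $\langle\cdot,\alpha\rangle$ range over the Poincar\'e duals of all primitive classes. We can arrange that each such curve passes through $p$ by an isotopy dragging $p$ onto it. In particular the duals of $a_j$ and $b_j$ occur, and these generate $H^1(M;\Zf)$. The translations are additive, so pushing $p$ generates the subgroup $m_p H^1(M;\Zf)$. Ranging over all $p$ gives $\sum_p m_pH^1(M;\Zf)=D\cdot H^1(M;\Zf)$, since $\Zf$ is cyclic and $\sum_p m_p\Zf=D$. Conversely, every individual push is a translation by an element of this subgroup, so the pushes generate exactly these translations.

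The main obstacle is Step 1: getting the push-map identity and its sign conventions right. In particular one must check that the two boundary curves really differ by $\gamma_p$ rather than by a combination involving the other punctures. This holds because the annulus between $\alpha_+$ and $\alpha_-$ contains only $p$, which in turn requires $\alpha$ to avoid every point of $C$ other than $p$. We can always achieve this by a small isotopy.
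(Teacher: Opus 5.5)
Your proposal is correct and is essentially the paper's own proof. It uses the same factorization $P(\alpha)=T_{\alpha_+}T_{\alpha_-}^{-1}$ into commuting transvections, the same identification of $\alpha_+-\alpha_-$ with $\gamma_p$ via the annulus containing only $p$, and the same symplectic-basis argument giving $m_pH^1(M;\Zf)$ and, summed over $p$, $D\cdot H^1(M;\Zf)$.

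One cosmetic point about signs. Reversing $\alpha$ replaces $P(\alpha)$ by $P(\alpha)^{-1}$ but leaves the class $\alpha_+-\alpha_-$ unchanged, so it does not absorb a sign. The sign is fixed by the convention for the direction of the push, which the paper also leaves tacit, and it plays no role in the generated subgroup.
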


\begin{proof}
$P(\alpha)$ is the composition $T_{\alpha_L}T_{\alpha_R}^{-1}$ of Dehn twists along the two
boundary curves of an annulus neighbourhood of the curve, with $p$ in the annulus between
them. On $H_1(M\setminus C)$ these are transvections, and since the two curves are disjoint,
$\langle\alpha_L,\alpha_R\rangle=0$ and the twists commute, so
\[
  P(\alpha)_*x
  = x+\langle x,\alpha_L\rangle\alpha_L-\langle x,\alpha_R\rangle\alpha_R .
\]
Both boundary curves are disjoint from $C$ and homologous to $\alpha$ in $M$, so their
intersection numbers with $x$ agree and equal $\langle x,\alpha\rangle$; and their
difference in $H_1(M\setminus C)$ is $\gamma_p$, because the annulus between them contains
exactly the puncture $p$. Hence $P(\alpha)_*x=x+\langle x,\alpha\rangle\gamma_p$ and
$(P(\alpha)^*\rho)(x)=\rho(x)+m_p\langle x,\alpha\rangle$. Puncture values are unchanged
because $\langle\gamma_i,\alpha\rangle=0$. Finally
$\alpha\mapsto\langle\cdot,\alpha\rangle$ is Poincar\'e duality, so already for $\alpha$ in
a symplectic basis these functionals span $m_pH^1(M;\Zf)$; summing over $p$ gives
$D\cdot H^1(M;\Zf)$, and no more, since every such move is of this form.
\end{proof}

\begin{lemma}[Symplectic transitivity]\label{lem:sp}
Let $g\ge1$, $N\ge1$. For $v\in(\ZZ/N)^{2g}$ set
$\content(v)=\gcd(N,v_1,\dots,v_{2g})$. Then $\Sp(2g,\ZZ/N)$ preserves $\content$
and acts transitively on each of its level sets.
\end{lemma}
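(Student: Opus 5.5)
Invariance of $\content$ is immediate: $\Sp(2g,\ZZ/N)$ acts by $\ZZ/N$-linear automorphisms of $(\ZZ/N)^{2g}$, and $\content(v)$ is determined by the cyclic submodule $\langle v\rangle$, or dually by the ideal of $\ZZ/N$ generated by the coordinates of $v$. A linear automorphism $A$ sends each coordinate of $Av$ into the ideal generated by the coordinates of $v$, and $A^{-1}$ gives the reverse inclusion. So that ideal is preserved, and with it its generator $\gcd(N,v_1,\dots,v_{2g})$.

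For transitivity I will show that every $v$ with $\content(v)=c$ lies in the orbit of $c\,e_1$, where $e_1$ is the first basis vector. The plan is to lift $v$ to an integer vector and then use (S3) and (S4).

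\textbf{Step 1: lift.} Choose an integer lift $\tilde v\in\ZZ^{2g}$ of $v$. Then $\gcd(N,\tilde v_1,\dots,\tilde v_{2g})=c$, but $\gcd(\tilde v_1,\dots,\tilde v_{2g})$ itself may be a proper multiple of $c$, or $0$ if $v=0$. So I will adjust the lift by multiples of $N$ until the integer gcd of the coordinates is exactly $c$.

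\textbf{Step 2: adjust the lift.} This is a Dirichlet/CRT-type step: replace $\tilde v_1$ by $\tilde v_1+tN$ so that $\gcd(\tilde v_1+tN,\tilde v_2,\dots,\tilde v_{2g})=c$. Write $c'=\gcd(\tilde v_2,\dots,\tilde v_{2g})$, interpreted as $0$ if all of these vanish. Then $\gcd(c',\tilde v_1,N)=c$, and I need some $t$ with $\gcd(\tilde v_1+tN,c')=c$.

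If $c'=0$, pick $t$ such that $\tilde v_1+tN=\pm c$ modulo nothing; concretely, alter the second coordinate instead, setting $\tilde v_2=N$ (which is legal since it is congruent to $0$). Since $2g\ge2$, the gcd becomes $\gcd(\tilde v_1,N,\dots)=c$.

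If $c'\neq0$, use the standard lemma that $\gcd(a+tN,b)$ can be made equal to $\gcd(a,N,b)$. To choose $t$, consider each prime $\ell\mid b$ and avoid the extra $\ell$-divisibility by CRT. This is the one point requiring care, and I expect it to be the main obstacle, though it is elementary.

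\textbf{Step 3: conclude.} Now $\tilde v=c\,w$ with $w$ primitive in $\ZZ^{2g}$. By (S3) there is $\tilde A\in\Sp(2g,\ZZ)$ with $\tilde A w=e_1$, so $\tilde A\tilde v=c\,e_1$. Reducing mod $N$ gives $A\in\Sp(2g,\ZZ/N)$ with $Av=c\,e_1$. Any two vectors of content $c$ are therefore in one orbit, with $c=N$ corresponding to $v=0$, handled trivially.

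Step 3 only needs reduction of integral matrices, which is a homomorphism. Since we only push elements down from $\Sp(2g,\ZZ)$, (S4) is not even needed, beyond noting that the reduced matrices lie in $\Sp(2g,\ZZ/N)$.
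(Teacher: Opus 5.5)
Your proof is correct and is essentially the paper's. Both lift to an integer vector whose coordinates have gcd exactly $c$, by a CRT choice over the primes dividing the gcd of the untouched coordinates, and then apply (S3) and reduce mod $N$; you adjust the first coordinate and handle the degenerate case by setting $\tilde v_2=N$, where the paper first makes $\tilde a_1\ne0$ and then adjusts the second.

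Delete the clause ``pick $t$ such that $\tilde v_1+tN=\pm c$''. It is false in general (take $N=5$, $\tilde v_1=2$, $c=1$), and the reassignment $\tilde v_2=N$ makes it unnecessary. You are also right that (S4) is not needed here: the argument uses only that reduction carries $\Sp(2g,\ZZ)$ into $\Sp(2g,\ZZ/N)$.
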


\begin{proof}
Invariance is clear, as $S$ is invertible over $\ZZ/N$. For transitivity, let
$\content(v)=c$, write $N=cN'$ and $v=ca$ with $a\in(\ZZ/N')^{2g}$ and
$\gcd(N',a_1,\dots,a_{2g})=1$.

\emph{Step 1: lift $a$ to a primitive integer vector.} Adjusting one coordinate does not
suffice: for $N'=5$ and $a=(2,0,\dots,0)$ every lift $(2+5k,0,\dots,0)$ has content
$|2+5k|\ne1$. So we move two, which is possible as $2g\ge2$.

Choose a lift $\tilde a$ with $\tilde a_1\ne0$ (add a multiple of $N'$ if needed) and set
$D=\gcd(\tilde a_1,\tilde a_3,\dots,\tilde a_{2g})>0$, which omits $\tilde a_2$ and is
therefore fixed once and for all. Let $p\mid D$ be prime. If $p\mid N'$ then
$p\nmid\tilde a_2$, since otherwise $p$ would divide $N'$ and every $\tilde a_i$, against
$\gcd(N',a)=1$; so $\tilde a_2+N't$ is prime to $p$ for every $t$. If $p\nmid N'$, then
$N'$ is invertible mod $p$ and $t\equiv(1-\tilde a_2)(N')^{-1}$ makes
$\tilde a_2+N't\equiv1$. Choosing such a $t$ simultaneously for the finitely many primes of
the second kind, by the Chinese Remainder Theorem, gives $\gcd(D,\tilde a_2+N't)=1$, hence
$\gcd(\tilde a_1,\tilde a_2+N't,\tilde a_3,\dots,\tilde a_{2g})=1$: a primitive lift.

\emph{Step 2.} By (S3) there is $\tilde S\in\Sp(2g,\ZZ)$ with $\tilde S\tilde a=e_1$,
hence $\tilde S(c\tilde a)=ce_1$.

\emph{Step 3.} By (S4) the reduction $S$ of $\tilde S$ lies in $\Sp(2g,\ZZ/N)$ and
$Sv=ce_1$. Every $v$ of content $c$ maps to the same $ce_1$, so the level set is one orbit.
\end{proof}

\begin{lemma}[A base point with full symplectic stabilizer]\label{lem:basepoint}
There exist $\rho_0\in A(m)$ and a subgroup $H\le\MCG(M,C)$ fixing $C$ pointwise such
that $H$ fixes $\rho_0$ and the image of $H$ in $\operatorname{Aut}(H^1(M;\Zf))$ is all
of $\Sp(2g,\Zf)$.
\end{lemma}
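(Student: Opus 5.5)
We prove the lemma by confining all the cones to a disc. Choose a closed disc $\Delta\subset M$ containing $C$ in its interior, and let $\Sigma=M\setminus\operatorname{int}\Delta$. This $\Sigma$ is a copy of $\Sigma_{g,1}$ with boundary $\partial\Delta$. Let $H$ be the image of $\MCG(\Sigma_{g,1},\partial)\to\MCG(M,C)$, obtained by extending each homeomorphism by the identity on $\Delta$. Every element of $H$ fixes $\Delta$ pointwise, hence fixes $C$ pointwise, so $H\le\MCG(M,C)$ as required.

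\emph{Step 1: the symplectic image.} The inclusion $\Sigma\hookrightarrow M$ induces an isomorphism $H_1(\Sigma;\ZZ)\cong H_1(M;\ZZ)$, since the boundary curve is null-homologous in $\Sigma$. By (S5), the action of $H$ on $H_1(M;\ZZ)$ is all of $\Sp(2g,\ZZ)$. Reducing mod $4$ and applying (S4), the image of $H$ on $H_1(M;\Zf)\cong H^1(M;\Zf)$ is all of $\Sp(2g,\Zf)$. The identification here is Poincaré duality, which intertwines the two actions as $\varphi_*$ versus $(\varphi^{-1})^*$. Since $\Sp$ is a group, surjectivity is unaffected.

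\emph{Step 2: the base point.} We need $\rho_0\in A(m)$ fixed by $H$. Use the splitting
\[
H_1(M\setminus C)\cong H_1(\Sigma)\oplus H_1(\Delta\setminus C),
\]
which holds because $\partial\Delta$ is null-homologous on the $\Sigma$ side. Concretely, $H_1(M\setminus C)$ is generated by the symplectic basis curves $a_j,b_j$, drawn inside $\Sigma$, together with the $\gamma_i$, drawn inside $\Delta$, subject to the single relation $\sum_i\gamma_i=0$. Define $\rho_0$ by
\[
\rho_0(a_j)=\rho_0(b_j)=0,\qquad \rho_0(\gamma_i)=m_i .
\]
This is a well-defined character because $\sum m_i\equiv0\pmod4$ by Gauss--Bonnet, and it lies in $A(m)$. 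More intrinsically, $\rho_0$ is the composite $H_1(M\setminus C)\to H_1(\Delta\setminus C)\to\Zf$, where the first map is the projection in the splitting.

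\emph{Step 3: invariance.} Let $\varphi\in H$. Then $\varphi$ is the identity on $\Delta\setminus C$, so it fixes each $\gamma_i$. It maps $H_1(\Sigma)$, viewed inside $H_1(M\setminus C)$, into itself, because it preserves $\Sigma$ setwise. Hence $\varphi_*$ respects the splitting and acts as the identity on the second summand. Since $\rho_0$ vanishes on the first summand, $\rho_0\circ\varphi_*=\rho_0$.

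\emph{Main obstacle.} The subtle point is the splitting in Step 2. The class $[\partial\Delta]=\sum_i\gamma_i$ is null-homologous in $\Sigma$, so $H_1(\Sigma)$ embeds in $H_1(M\setminus C)$ and meets the span of the $\gamma_i$ trivially. We verify this with Mayer--Vietoris for $M\setminus C=\Sigma\cup(\Delta\setminus C)$, glued along a collar of $\partial\Delta$. The boundary class maps to zero in $H_1(\Sigma)$ and to $\sum_i\gamma_i$ in $H_1(\Delta\setminus C)$, which gives
\[
H_1(M\setminus C)\cong H_1(\Sigma)\oplus H_1(\Delta\setminus C)/\langle\textstyle\sum_i\gamma_i\rangle,
\]
and $H$ acts diagonally on the two summands as claimed.
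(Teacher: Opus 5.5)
Your proof is correct and is essentially the paper's: the same disc around $C$, the same $\Sigma$ and $H$, the same $\rho_0$ vanishing on the $a_j,b_j$, and (S5) followed by (S4) for the symplectic image. Your Mayer--Vietoris splitting is only a repackaging of the paper's observation that $\varphi_*x-x$ lies in the image of $H_1(\Sigma)$. The one slip is the first display of Step 2, which should read $H_1(\Sigma)\oplus H_1(\Delta\setminus C)/\langle\sum_i\gamma_i\rangle$, as your last paragraph correctly has it; $\rho_0$ is then defined on that quotient, and it is well defined because $\sum_i m_i\equiv0\pmod 4$.
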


\begin{proof}
Choose an embedded closed disk $D_0\subset M$ containing $C$ in its interior and let
$\Sigma=M\setminus\operatorname{int}(D_0)$, a genus-$g$ surface with one boundary circle.
Choose a symplectic basis $a_1,b_1,\dots,a_g,b_g$ of $H_1(M;\ZZ)$ represented by simple
closed curves in the interior of $\Sigma$. Define $\rho_0$ by $\rho_0(\gamma_i)=m_i$ and
$\rho_0(a_j)=\rho_0(b_j)=0$; this is well defined because the only relation among the
generators is $\sum_i\gamma_i=0$ and $\sum_i m_i\equiv0$. Note that $\rho_0$ vanishes on the
image of $H_1(\Sigma;\ZZ)\to H_1(M\setminus C;\ZZ)$, which is generated by the $a_j,b_j$ and
by $[\partial\Sigma]=\sum_i\gamma_i$.

Let $H$ be the image of $\MCG(\Sigma,\partial\Sigma)\to\MCG(M,C)$, extending by the identity
on $D_0$. Every $\varphi\in H$ is the identity near $C$, so $\varphi_*\gamma_i=\gamma_i$;
and for any $1$-cycle $x$ the class $\varphi_*x-x$ lies in the image of $H_1(\Sigma)$, since
$\varphi$ is the identity outside $\Sigma$ and near $\partial\Sigma$, so $x$ and
$\varphi_*x$ agree outside $\Sigma$ and their arcs inside have the same endpoints. Hence
$(\varphi^*\rho_0)(x)=\rho_0(x)+\rho_0(\varphi_*x-x)=\rho_0(x)$. Finally
$\MCG(\Sigma,\partial\Sigma)\to\Sp(2g,\ZZ)$ is surjective by (S5) and
$\Sp(2g,\ZZ)\to\Sp(2g,\Zf)$ is surjective by (S4).
\end{proof}

With $\rho_0$ fixed, identify $A(m)$ with $\Zf^{2g}$ by $\rho=\rho_0+v$. Then $H$ acts
linearly, $v\mapsto Sv$, and point pushing acts by translations $v\mapsto v+w$,
$w\in D^{2g}$, and
\begin{equation}\label{eq:image}
  \im(\rho_0+v)=\gen{D,v_1,\dots,v_{2g}}=\gcd(d_D,v_1,\dots,v_{2g})\,\Zf ,
\end{equation}
where $d_D$ generates $D$.

\begin{theorem}[Reduction Lemma]\label{thm:reduction}
Let $g\ge1$ and fix the orders $m$. Two signatures with orders $m$ lie in the same
$\MCG(M,C)$-orbit if and only if they have the same $\im\rho$. Hence the orbits with
orders $m$ are in bijection with the subgroups of $\Zf$ containing $D$: there are
$1$, $2$ or $3$ of them according as $D=\Zf$, $D=2\Zf$, $D=0$.
\end{theorem}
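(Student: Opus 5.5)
We prove both directions in the coordinates fixed after Lemma~\ref{lem:basepoint}, where $A(m)$ is identified with $\Zf^{2g}$ via $\rho=\rho_0+v$ and $\im\rho$ is computed by \eqref{eq:image}.

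\emph{Invariance.} For $\varphi\in\MCG(M,C)$ the map $\varphi_*$ is an automorphism of $H_1(M\setminus C;\ZZ)$. Hence $\im(\varphi^*\rho)=\im(\rho\circ\varphi_*)=\im\rho$. Since $\varphi$ fixes $C$ pointwise, it also preserves the puncture values, so it maps $A(m)$ to itself. This gives the ``only if'' direction.

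\emph{Transitivity on each level set.} Take $\rho=\rho_0+v$ and $\rho'=\rho_0+v'$ with $\im\rho=\im\rho'=d\Zf$. We split according to $D=d_D\Zf$.

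If $d_D=1$, Lemma~\ref{lem:push} translates by all of $\Zf^{2g}$, so one push sequence carries $v$ to $v'$.

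If $d_D=4$, then $D=0$ and \eqref{eq:image} reads $\im\rho=\content(v)\Zf$ with $N=4$. By Lemma~\ref{lem:sp} some $S\in\Sp(2g,\Zf)$ satisfies $Sv=v'$. By Lemma~\ref{lem:basepoint}, $S$ is realized by some $\varphi\in H$ acting linearly on $v$ while fixing $\rho_0$. One point to check is that the action of $H$ on the $v$-coordinate really is the symplectic action. It is: $\varphi^*(\rho_0+v)=\rho_0+\varphi^*v$, and $\varphi^*$ acts on $H^1(M;\Zf)$ through its image in $\Sp$. We may need $S^{-1}$ or a transpose depending on conventions, but the image is a group, so this is harmless.

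If $d_D=2$, pushes translate by $2\Zf^{2g}$, so we can work with $\bar v=v\bmod 2\in(\ZZ/2)^{2g}$. Here $\im\rho=\Zf$ iff $\bar v\neq0$, and $\im\rho=2\Zf$ iff $\bar v=0$. In the second case pushes already take $v$ to $0$ and $v'$ to $0$. In the first case, Lemma~\ref{lem:sp} with $N=2$, via the surjection $\Sp(2g,\Zf)\to\Sp(2g,\ZZ/2)$ from (S4), gives $S$ with $S\bar v=\bar v'$. Then $Sv-v'\in 2\Zf^{2g}$ is removed by a push.

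\emph{Counting.} By \eqref{eq:image}, $\im\rho$ ranges over subgroups $d\Zf\supseteq D$, since $\im\rho=\gen{D,v}$. Every such subgroup is attained: choose $v=(d,0,\dots,0)$, which uses $g\ge1$. This gives $1$, $2$ or $3$ orbits.

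The main obstacle is bookkeeping rather than depth. In the mixed case we must confirm that pushes and $H$ combine, that is, that the group they generate contains the composite we use. This holds because we simply compose elements of $\MCG(M,C)$. We must also confirm that the identification of the $H$-action with linear maps on $v$ is legitimate, which requires $H$ to fix $\rho_0$ exactly. That is precisely what Lemma~\ref{lem:basepoint} provides.
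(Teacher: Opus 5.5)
Your proof is correct and is the paper's argument. The paper runs it uniformly: it reduces $v$ modulo $D$ to $(\ZZ/d_D)^{2g}$, applies Lemma~\ref{lem:sp} with $N=d_D$, lifts by (S4), realizes the lift in $H$ via Lemma~\ref{lem:basepoint}, and finishes with a point push; you unroll this into the three cases $d_D=1,4,2$, and your explicit check that every subgroup $d\Zf\supseteq D$ is attained via $v=(d,0,\dots,0)$ fills in a step the paper leaves implicit in \eqref{eq:image}.
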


\begin{proof}
If $\rho'=\rho\circ\varphi_*$ then $\im\rho'=\rho(\varphi_*H_1)=\rho(H_1)=\im\rho$, so the
invariant is constant on orbits.

Conversely let $v,w\in\Zf^{2g}$ with $\gcd(d_D,v)=\gcd(d_D,w)$, by~\eqref{eq:image}. Reduce
modulo $D$: since $d_D\mid4$ we have $\Zf/D=\ZZ/d_D$, and the point-pushing translations by
$D^{2g}$ act transitively on each fibre of $\Zf^{2g}\to(\ZZ/d_D)^{2g}$. The reductions
$\bar v,\bar w$ have equal content in $(\ZZ/d_D)^{2g}$, so by Lemma~\ref{lem:sp} with
$N=d_D$ there is $\bar S\in\Sp(2g,\ZZ/d_D)$ with $\bar S\bar v=\bar w$; by (S4) it lifts to
$S\in\Sp(2g,\Zf)$, which by Lemma~\ref{lem:basepoint} is realized by some $\varphi\in H$.
Applying $\varphi$ and then a point-pushing translation carries $v$ to $w$.
\end{proof}

\begin{corollary}\label{cor:odd}
If some cone angle is an odd multiple of $\pi/2$, in particular if some cone has angle
$3\pi/2$ or $5\pi/2$, then $D=\Zf$, all signatures with those cone angles form a single
$\MCG$-orbit, and realizability does not depend on $\rho$.
\end{corollary}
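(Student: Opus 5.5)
This is a direct specialization of Theorem~\ref{thm:reduction}, and I would prove it in three short steps.

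\emph{Step 1: $D=\Zf$.} Suppose some cone angle $\theta_p=(m_p+4)\pi/2$ is an odd multiple of $\pi/2$. Then $m_p$ is odd, so $m_p\bmod 4\in\{1,3\}$ is a unit in $\Zf$. It follows that $D=\gen{m_1,\dots,m_n}$, which contains $m_p$, is all of $\Zf$. The cases $3\pi/2$ and $5\pi/2$ are $m_p=-1$ and $m_p=1$.

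\emph{Step 2: one orbit.} Every signature satisfies $D\subseteq\im\rho$, so $D=\Zf$ forces $\im\rho=\Zf$ for every $\rho\in A(m)$. All signatures with orders $m$ therefore share the same invariant $\im\rho$, and for $g\ge1$ Theorem~\ref{thm:reduction} says they form a single $\MCG(M,C)$-orbit.

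There is also a more direct route that avoids the symplectic part entirely. By Lemma~\ref{lem:push}, point pushes translate $A(m)$ by $D\cdot H^1(M;\Zf)=H^1(M;\Zf)$. Since $A(m)$ is a coset of $H^1(M;\Zf)$, these translations alone act transitively. I would mention this because it shows that only pushes around the odd cone are needed.

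When cone angles are equal, "same cone angles" should be read modulo relabelling, following the convention of the paper. Since $D$ depends only on the multiset of orders, relabelling does not affect the argument.

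\emph{Step 3: realizability is independent of $\rho$.} By Lemma~\ref{lem:d2}(a), realizability is invariant under $\MCG(M,C)$. Step~2 shows all $\rho\in A(m)$ lie in one orbit, so either every such signature is realizable or none is.

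The only point needing care is genus $0$, which Theorem~\ref{thm:reduction} excludes. There $H^1(M;\Zf)=0$, so $A(m)$ is a single point, and the statement holds trivially. With Step~1 and Theorem~\ref{thm:reduction} in hand, I expect no real obstacle.
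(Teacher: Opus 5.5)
Your proposal is correct and is essentially the paper's proof: the paper observes that an odd $m_i$ gives $D=\Zf$, hence $d_D=1$, so the quotient $(\ZZ/d_D)^{2g}$ is trivial and Theorem~\ref{thm:reduction} leaves a single orbit. Triviality of that quotient is precisely your remark that point pushes alone act transitively, and your handling of genus $0$ and of relabelling is a harmless addition.
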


\begin{proof}
$m_i$ odd gives $D=\Zf$ and $d_D=1$, so $(\ZZ/d_D)^{2g}$ is trivial and
Theorem~\ref{thm:reduction} leaves one orbit.
\end{proof}

Mechanically, pushing a cone of odd order once around a handle shifts the holonomy along the
dual loop by an odd amount, so repeated pushes reach every value. This is where the gcd
hypothesis of~\cite{shen2022} comes from, and it is also the point at which less turns out
to be needed: their rerouting must reproduce turning numbers in $\tfrac14\ZZ$ exactly and so
asks for $\gcd_{\ZZ}(m_i)=1$, while realizability sees only the class of $\rho$ in $\Zf$ and
so asks for $\gen{m_i}=\Zf$. See \S\ref{sec:relation}(ii).

\subsection{Realizability is non-emptiness of a stratum}\label{sec:main}

\begin{theorem}\label{thm:main}
Let $s=(m,\rho)$ be a holonomy signature on $(M,C)$, $g=\operatorname{genus}(M)$, and
let $d$ generate $\im\rho$. Then $s$ is realizable if and only if the stratum of
primitive $(4/d)$-differentials with orders $(m_1/d,\dots,m_n/d)$ on a genus-$g$
surface is non-empty.
\end{theorem}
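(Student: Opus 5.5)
The proof combines the dictionary of \S\ref{sec:dictionary} with the Reduction Lemma, and I would prove the two directions separately.

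\emph{Necessity.} Suppose $s$ is realizable by a seamless atlas. Lemma~\ref{lem:d1} turns the atlas into a flat $\Zf$ cone metric with signature $s$. Lemma~\ref{lem:d3} then turns that metric into a pair $(X,q)$, where $q$ is a meromorphic $4$-differential with $\divisor(q)=\sum_i m_ic_i$ and rotational holonomy $\rho$. Since $d$ generates $\im\rho$, Lemma~\ref{lem:d4} gives $q=\eta^d$ for a \emph{primitive} $(4/d)$-differential $\eta$ with $\divisor(\eta)=\sum_i(m_i/d)c_i$. Here the integrality $d\mid m_i$ is automatic, since $m_i\equiv\rho(\gamma_i)\in d\Zf$. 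So $(X,\eta)$ is a point of the stratum, and the stratum is non-empty.

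\emph{Sufficiency.} Take a point $(X,\eta)$ of the stratum. It has orders $m_i/d$ at distinct points $p_i$ of a genus-$g$ surface. Set $q=\eta^d$. Then $q$ is a meromorphic $4$-differential with orders $m_i>-4$ at the $p_i$, because $m_i/d>-4/d$. By the converse half of Lemma~\ref{lem:d4}, its rotational holonomy $\rho'$ has $\im\rho'=d\Zf$ exactly; this is where primitivity of $\eta$ is used. By Lemma~\ref{lem:d3}, $|q|^{1/2}$ is a flat $\Zf$ cone metric with cone angles $(m_i+4)\pi/2$. Choose a homeomorphism $X\to M$ sending $p_i\mapsto c_i$, so that orders match; points of equal order may need relabelling, which is allowed since the stratum is stated by multiset. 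The pulled-back signature $s'=(m,\rho')$ then has the same orders as $s$ and the same $\im$.

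Two cases remain. For $g\ge1$, Theorem~\ref{thm:reduction} puts $s'$ in the $\MCG(M,C)$-orbit of $s$, and Lemma~\ref{lem:d2}(b) shows that $s$ is realizable. For $g=0$, $H^1(M;\Zf)=0$, so $A(m)$ is a single point and $\rho'=\rho$ directly; alternatively $\im\rho=D$ is forced, so no orbit argument is needed.

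\emph{Main obstacle.} The delicate point is the bookkeeping in the converse of Lemma~\ref{lem:d4} and its interaction with labelling. One must check that $\eta$ primitive forces $\im\rho'$ to be all of $d\Zf$ and not something smaller. The precise statement is that $\rho'=d\rho_\eta$ with $\rho_\eta$ surjective onto $\ZZ_{4/d}$, and I would verify this via the first part of Lemma~\ref{lem:d4} applied to $\eta$. One must also confirm that marked points of order $0$ are harmless: they are regular points of $\eta$, and the stratum convention permits them to be marked. I expect these checks, rather than any new construction, to be the only real work.
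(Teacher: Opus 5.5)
Your proposal is correct and follows the paper's proof essentially step for step. For necessity you use Lemmas~\ref{lem:d1}, \ref{lem:d3} and \ref{lem:d4}. For sufficiency you put $q=\eta^d$, invoke the converse half of Lemma~\ref{lem:d4} to get $\im\rho'=d\Zf$, transport to $M$ by a homeomorphism matching equal orders, and finish with Theorem~\ref{thm:reduction} for $g\ge1$, or with $A(m)$ being a single point for $g=0$, followed by Lemma~\ref{lem:d2}(b).
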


\begin{proof}
If $s$ is realizable, Lemmas~\ref{lem:d1} and~\ref{lem:d3} give $(X,q)$ with
$\divisor(q)=\sum m_ic_i$ and holonomy $\rho$, and Lemma~\ref{lem:d4} extracts a primitive
$(4/d)$-differential $\eta$ with orders $m_i/d$.

Conversely let $\eta$ be a primitive $(4/d)$-differential with orders $m_i/d$ on a genus-$g$
Riemann surface $X$, with zeros and poles at $p_1,\dots,p_n$, and put $q=\eta^{d}$. By
Lemma~\ref{lem:d4} the holonomy of $q$ has image exactly $d\Zf$, and
$\divisor(q)=\sum_i m_ip_i$, so by Lemma~\ref{lem:d3} the metric $|q|^{1/2}$ is a flat cone
metric on $X$ with the prescribed cone angles. Transport it to $M$ by an
orientation-preserving homeomorphism $X\to M$ carrying $p_i$ to $c_{\sigma(i)}$ for a
permutation $\sigma$ matching equal orders; the resulting signature $s'$ has the same
multiset of orders as $s$ and $\im\rho'=d\Zf=\im\rho$. For $g\ge1$,
Theorem~\ref{thm:reduction} puts $s'$ in the orbit of $s$; for $g=0$ we have
$H^1(M;\Zf)=0$, so $A(m)$ is a single point and $s'=s$ after relabelling $C$. Either way
Lemma~\ref{lem:d2}(b) gives realizability of $s$.
\end{proof}

The problem is therefore \emph{equivalent} to a stratum non-emptiness question. For $d=4$
these are strata of abelian differentials, for $d=2$ strata of primitive quadratic
differentials, and for $d=1$ strata of primitive $4$-differentials.

This is the point at which the question meets a developed theory. Strata of abelian and
quadratic differentials, their non-emptiness and their connected components, are
classical~\cite{masur1993,kontsevichzorich,lanneau}; existence with prescribed singularities
has since been settled uniformly in $k$ by Gendron and Tahar~\cite{gt2020,gt2021,gt2025q},
whose Theorem~1.4 is quoted as (S7) and is the one external input the classification uses.
The construction of \S\ref{sec:construction} is independent of it, and over the genus-two
quadratic strata it reproduces that theorem's exceptional list exactly.

\subsection{Genus zero and genus one}\label{sec:lowgenus}

\begin{proposition}\label{prop:genus0}
Every Gauss--Bonnet-admissible holonomy signature on the sphere is realizable.
\end{proposition}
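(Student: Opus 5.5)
On the sphere the holonomy is not extra data, so the plan is to bypass the stratum machinery of Theorem~\ref{thm:main} entirely and realize the signature directly by a flat cone metric.

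\emph{Step 1: the signature is determined by $m$.} For $g=0$ we have $H^1(M;\Zf)=0$. Hence the coset $A(m)$ is a single point, and $\rho$ is forced by $\rho(\gamma_i)=m_i\bmod 4$. It therefore suffices to produce \emph{some} flat cone metric on $S^2$ with cone angles $(m_i+4)\pi/2$ at $n$ distinct points and rotational holonomy in $\Zf$. Lemma~\ref{lem:d2}(b), together with Lemma~\ref{lem:d1}, then gives realizability of the unique signature with orders $m$, after relabelling equal orders if necessary.

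\emph{Step 2: enough cones for Troyanov.} Gauss--Bonnet reads $\sum_i m_i=-8$ with every $m_i\ge-3$. Two cones contribute at least $-6>-8$, so $n\ge3$ automatically. All angles $\theta_i=(m_i+4)\pi/2$ are positive, and they satisfy Gauss--Bonnet because $\sum_i(2\pi-\theta_i)=-\tfrac{\pi}{2}\sum_i m_i=4\pi$. Points with $m_i=0$ are allowed as angle-$2\pi$ points, or can simply be dropped and re-added as smooth marked points, provided at least three genuine cones remain. If fewer than three remain, we keep the angle-$2\pi$ points in the input to (S6), which permits any $\theta_i>0$. So (S6) applies and yields a flat cone metric on $S^2$ with the prescribed angles at any chosen $n$ distinct points.

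\emph{Step 3: holonomy lies in $\Zf$.} This is the only point needing an argument. The rotational holonomy is a homomorphism from $H_1(S^2\setminus C;\ZZ)$, since rotations in the plane commute. On the sphere $H_1(S^2\setminus C;\ZZ)$ is generated by the loops $\gamma_i$. The holonomy around $\gamma_i$ is rotation by the cone angle $\theta_i$, which lies in $(\pi/2)\ZZ$, i.e.\ is the quarter-turn class $m_i\bmod 4$. Thus the whole image lies in $\mu_4=\Zf$ and agrees with $\rho$ on generators. Lemma~\ref{lem:d1} converts the metric into a seamless atlas with signature $(m,\rho)$.

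I expect no real obstacle. The one point to state carefully is that the $g=0$ failure of (S7) noted in \S\ref{sec:blackbox}, the profile $(-2,-2,-2,-2)$, is harmless. That stratum of primitive $4$-differentials is empty only because the forced $\rho$ there has image $2\Zf$. The argument above never asks for primitivity: it realizes the signature with whatever $\im\rho$ the orders force. This is exactly why genus $0$ is handled via (S6) rather than via Theorem~\ref{thm:main}.
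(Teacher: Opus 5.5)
Your proposal is correct and follows the paper's proof essentially step for step: you get $n\ge3$ from Gauss--Bonnet with $m_i\ge-3$, and $\rho$ is forced by $m$ because $H_1(S^2\setminus C;\ZZ)$ is generated by the $\gamma_i$, so any flat cone metric with angles in $(\pi/2)\ZZ$ automatically has holonomy in $\Zf$ and the given signature; existence then comes from (S6), and Lemma~\ref{lem:d2}(b) finishes. One small simplification: the same count applied to the nonzero orders alone shows there are always at least three genuine cones, so your contingency for fewer than three is vacuous, and it would be harmless anyway since (S6) allows $\theta_i=2\pi$.
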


\begin{proof}
First, $n\ge3$: Gauss--Bonnet gives $\sum_i m_i=-8$ while $m_i>-4$ forces
$\sum_i m_i>-4n$. Second, $\rho$ is determined by $m$, because $H_1(S^2\setminus C;\ZZ)$ is
generated by the $\gamma_i$; in particular any flat cone metric on $S^2$ with cone angles in
$(\pi/2)\ZZ$ automatically has holonomy in $\Zf$, and its signature is the given one. Third,
such a metric exists by (S6). Now apply Lemma~\ref{lem:d2}(b).
\end{proof}

For genus $1$ the answer comes from Abel--Jacobi, in a form that treats all three values of
$d$ at once.

\begin{proposition}\label{prop:genus1}
Let $k\in\{1,2,4\}$ and let $\mu=(\mu_1,\dots,\mu_n)$ be integers with $\mu_i>-k$ and
$\sum_i\mu_i=0$. Let $n_0$ be the number of nonzero $\mu_i$. The stratum of primitive
$k$-differentials with orders $\mu$ on a genus-$1$ surface is non-empty if and only if
\begin{enumerate}[label=\emph{(\roman*)},itemsep=1pt]
  \item $n_0=0$ and $k=1$; or
  \item $n_0=2$, in which case the nonzero orders are $(\mu,-\mu)$, and $|\mu|\ge2$; or
  \item $n_0\ge3$.
\end{enumerate}
\emph{(}$n_0=1$ cannot occur, since the orders sum to zero.\emph{)}
\end{proposition}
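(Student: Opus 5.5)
On a genus-one surface $X=\CC/\Lambda$ the canonical bundle is trivial, with $dz$ a global nowhere-vanishing abelian differential. The plan is to use this to reduce everything to elliptic functions and the Abel--Jacobi map. A meromorphic $k$-differential is then $\eta=f\,(dz)^k$ with $f$ an elliptic function and $\divisor(\eta)=\divisor(f)$. By Abel's theorem, a divisor $\sum\mu_i p_i$ of degree zero is the divisor of some $f$ iff $\sum\mu_i p_i=0$ in $X$ (the group law), and then $f$ is unique up to a scalar.

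The first step is to characterise primitivity in these terms. $\eta$ is an $e$-th power ($e\mid k$, $e>1$) iff $f=h^e$ for an elliptic $h$. Since $dz$ is a global section, I will not go through Lemma~\ref{lem:d4}; I argue directly that $f$ has a single-valued $e$-th root iff $e\mid\mu_i$ for all $i$ and $\sum(\mu_i/e)p_i=0$ in $X$. For sufficiency, that divisor equals $\divisor(h)$ for some $h$ by Abel, so $h^e/f$ is a nonzero constant, which can be absorbed. Necessity is immediate.

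Hence the stratum is non-empty iff there exist distinct points $p_i\in X$ (for some lattice) with
\[
  \textstyle\sum_i\mu_ip_i=0,
\]
and such that, for every prime $e\mid k$ with $e\mid$ all $\mu_i$, we have $\sum(\mu_i/e)p_i\neq0$. Points with $\mu_i=0$ are unconstrained, so only the $n_0$ nonzero orders matter. The cases then go as follows.

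\emph{(i)} When $n_0=0$, $f$ is constant. This is primitive iff $k=1$, since for $k\ge2$ the differential $(dz)^k$ is a power.

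\emph{(ii)} When $n_0=2$ with orders $(\mu,-\mu)$, the condition reads $\mu(p_1-p_2)=0$ with $p_1\ne p_2$, so $p_1-p_2$ is a nonzero $\mu$-torsion point. If $|\mu|=1$ this is impossible, which is family (i) of (S7). If $|\mu|\ge2$, take $p_1-p_2$ to have exact order $|\mu|$. Then for any prime $e$ dividing $\gcd(k,\mu)$ we get $(\mu/e)(p_1-p_2)\ne0$, so the result is primitive. Note $|\mu|<k$ forces $|\mu|\le3$ on the pole side, but exact-order torsion points exist for every order, so this is not an obstacle.

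\emph{(iii)} When $n_0\ge3$, I choose $p_1,\dots,p_{n_0-1}$ freely and solve $\mu_{n_0}p_{n_0}=-\sum_{i<n_0}\mu_ip_i$. This equation has $\mu_{n_0}^2$ solutions, and I want one that is distinct from the others and avoids the non-primitivity locus. Here I expect the main obstacle: a careful genericity argument. The bad conditions are $p_{n_0}=p_j$ and, for the relevant prime $e$ (only $e=2$ arises, since $k\in\{1,2,4\}$), $\sum(\mu_i/2)p_i=0$. Each is a proper closed condition on the $(n_0-1)$-dimensional choice space. The point is that the solution set is a finite cover of $X^{n_0-1}$ with all components of full dimension, and the bad equations cut out lower-dimensional loci.

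To verify properness I would first try a dimension count. The bad equation $\sum(\mu_i/2)p_i=0$ together with $\sum\mu_ip_i=0$ still leaves a codimension-one condition as long as $n_0\ge2$ free points exist, because its coefficient of $p_1$ is nonzero. Coincidences $p_{n_0}=p_j$ turn the configuration into one with fewer points and are likewise codimension one. If that fails somewhere, the fallback is explicit: take $p_1,\dots,p_{n_0-2}$ generic and move $p_{n_0-1}$ along $X$, checking that the bad loci are finite in that one parameter.
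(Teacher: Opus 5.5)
\textbf{Verdict: genuine gap, in case (iii).} Your cases (i) and (ii) are fine. So is the reduction of primitivity to the single condition $\sum_i(\mu_i/2)p_i\neq0$: only $e=2$ matters, since a fourth power is a square. In (ii), taking $p_1-p_2$ of exact order $|\mu|$ is even a little more uniform than the paper's choice.

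The gap is how you handle the primitivity condition in (iii). It is not a lower-dimensional condition on the solution set $Z=\{\sum_i\mu_ip_i=0\}\subset X^{n_0}$. Your dimension count fails because the two equations are not independent: $\sum_i\mu_ip_i=2\sum_i(\mu_i/2)p_i$.
\begin{itemize}
\item On $Z$, the map $p\mapsto\sum_i(\mu_i/2)p_i$ therefore takes values in the finite group $X[2]$, so it is locally constant.
\item Hence the bad locus $B=\{\sum_i(\mu_i/2)p_i=0\}$ is an index-$4$ subgroup of $Z$ of the same dimension $n_0-1$. It is a union of whole components, and it contains the identity component of $Z$.
\item In your parametrization: over \emph{every} choice of $p_1,\dots,p_{n_0-1}$, exactly a quarter of the $\mu_{n_0}^2$ solutions $p_{n_0}$ give a square. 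These form a coset of $X[\mu_{n_0}/2]$ inside a coset of $X[\mu_{n_0}]$.
\item Example: for $k=4$, $\mu=(4,-2,-2)$, the solutions are $p_3\in 2p_1-p_2+X[2]$. The choice $p_3=2p_1-p_2$ always gives the square of the quadratic differential with divisor $2p_1-p_2-p_3$.
\end{itemize}
Genericity cannot avoid this. Your fallback fails for the same reason: moving $p_{n_0-1}$ continuously keeps you on one branch, where the bad condition holds identically or never.

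\textbf{The missing idea: primitivity selects a component; it is not a genericity condition.} This is what the paper's $e^*$ and $T$ do. With $\nu=\mu/e^*$ and $T$ of exact order $e^*$, the paper works on the coset $\Psi^{-1}(T)$ of $\Psi(p)=\sum_i\nu_ip_i$. There $\sum_i\mu_ip_i=e^*T=0$ and $\sum_i(\mu_i/e)p_i=(e^*/e)T\neq0$ hold identically. In your terms: when $2\mid k$ and all $\mu_i$ are even, restrict to a component of $Z$ on which $\sum_i(\mu_i/2)p_i$ equals a fixed nonzero $2$-torsion point. Otherwise there is nothing to check.

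Only after that is distinctness a genericity question, and there your count does work once you invoke $n_0\ge3$ explicitly.
\begin{itemize}
\item $Z\cap\{p_i=p_j\}$ is the kernel of a homomorphism $X^{n_0-1}\to X$.
\item That homomorphism is surjective because a third nonzero coefficient survives the merge.
\item So the intersection has dimension $n_0-2$ and meets the chosen irreducible component in a proper closed subset.
\end{itemize}
For $n_0=2$ this would fail, which is exactly why case (ii) behaves differently.
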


\begin{proof}
On $E=\CC/\Lambda$ the canonical bundle is trivialized by $dz$, so every meromorphic
$k$-differential is $q=f(dz)^k$ with $f$ meromorphic and $\divisor(q)=\divisor(f)$. By
Abel--Jacobi a degree-zero divisor $\sum_i\mu_ic_i$ is principal if and only if
\begin{equation}\label{eq:aj}
  \textstyle\sum_i \mu_i c_i=0 \quad\text{in the group law of } E .
\end{equation}
By Lemma~\ref{lem:d4}, $q$ is primitive if and only if for every $e>1$ dividing $k$ with
$e\mid\mu_i$ for all $i$,
\begin{equation}\label{eq:prim}
  \textstyle\sum_i (\mu_i/e)c_i\ne0 .
\end{equation}

\emph{Necessity.} If $n_0=0$ then $f$ has no zeros or poles, hence is constant, and
$q=c(dz)^k=(c^{1/k}dz)^k$ is a $k$-th power; for $k>1$ this contradicts primitivity, while
for $k=1$ it is the flat torus with marked points. If $n_0=2$ the nonzero orders are
$(\mu,-\mu)$ at distinct $c_1\ne c_2$, and~\eqref{eq:aj} reads $\mu(c_1-c_2)=0$ with
$c_1-c_2\ne0$, that is $E$ has a nonzero $\mu$-torsion point; for $|\mu|=1$ there is none.

\emph{Sufficiency.} Let $e^*$ be the largest divisor of $k$ dividing every $\mu_i$, put
$\nu=\mu/e^*$, and pick $T\in E$ of exact order $e^*$ (so $T=0$ if $e^*=1$). Since the
divisors of $k\in\{1,2,4\}$ form a chain, every $e>1$ occurring in~\eqref{eq:prim} divides
$e^*$. It therefore suffices to find \emph{distinct} points with $\sum_i\nu_ic_i=T$: then
$\sum_i\mu_ic_i=e^*T=0$, giving~\eqref{eq:aj}, while for each relevant $e$,
$\sum_i(\mu_i/e)c_i=(e^*/e)T$ has exact order $e$ and so is nonzero, giving~\eqref{eq:prim}.

If $n_0=2$ then $\nu=(\nu,-\nu)$ and we need $\nu(c_1-c_2)=T$. For $e^*>1$, multiplication
by $\nu$ is surjective, so choose $u$ with $\nu u=T$; then $u\ne0$ as $T\ne0$. For $e^*=1$
we have $T=0$ and $|\nu|=|\mu|\ge2$, so take $u$ a nonzero $|\nu|$-torsion point, which
exists as $E[|\nu|]\cong(\ZZ/|\nu|)^2$. Put $c_1=u$, $c_2=0$.

If $n_0\ge3$, let $\Psi\colon E^{n_0}\to E$ be $\Psi(c)=\sum_i\nu_ic_i$ over the nonzero
indices. Some $\nu_i\ne0$ and multiplication by it is surjective, so $\Psi$ is; the identity
component $K^0$ of $\ker\Psi$ is an abelian subvariety of dimension $n_0-1\ge2$, and
$\Psi^{-1}(T)$ contains the irreducible positive-dimensional coset $x_0+K^0$. For $i\ne j$
among the nonzero indices, $c_i-c_j$ is non-constant on $K^0$: if $\nu_i+\nu_j\ne0$ the
one-parameter subgroup $(c_i,c_j)=(\nu_jt,-\nu_it)$ lies in $K^0$ and there
$c_i-c_j=(\nu_i+\nu_j)t$; otherwise take a third nonzero index $l$, available since
$n_0\ge3$, and $(c_i,c_l)=(\nu_lt,-\nu_it)$, on which $c_i-c_j=\nu_lt$. Translation by $x_0$
preserves non-constancy, so each diagonal $\{c_i=c_j\}$ meets $x_0+K^0$ in a proper closed
subset; by irreducibility their finite union is not everything, so some point of
$\Psi^{-1}(T)$ has distinct coordinates. The marked points do not appear in $\Psi$ and go
anywhere else on $E$.
\end{proof}

\begin{corollary}[The torus]\label{cor:torus}
A holonomy signature on the torus is realizable if and only if it is \emph{not} one of
\begin{enumerate}[label=\emph{(\arabic*)},itemsep=1pt]
  \item no cones, with $\im\rho\ne0$;
  \item exactly two cones, of angles $3\pi/2$ and $5\pi/2$;
  \item exactly two cones, of angles $\pi$ and $3\pi$, with $\im\rho=2\Zf$.
\end{enumerate}
\end{corollary}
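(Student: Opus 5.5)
Corollary~\ref{cor:torus} follows by combining Theorem~\ref{thm:main} with Proposition~\ref{prop:genus1}. Let $s=(m,\rho)$ be a signature on the torus and let $d$ generate $\im\rho$. By Theorem~\ref{thm:main}, $s$ is realizable if and only if the genus-one stratum of primitive $k$-differentials with $k=4/d$ and orders $\mu_i=m_i/d$ is non-empty. Lemma~\ref{lem:d4} guarantees $d\mid m_i$, so the $\mu_i$ are integers. Since $g=1$, Gauss--Bonnet gives $\sum_i m_i=0$, hence $\sum_i\mu_i=0$. Also $\mu_i>-k$ is equivalent to $m_i>-4$. So Proposition~\ref{prop:genus1} applies with $k\in\{4,2,1\}$ as $d\in\{1,2,4\}$. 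The nonzero $\mu_i$ correspond exactly to the cones, that is the points with $m_i\neq0$, so $n_0$ is the number of genuine cones.

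The plan is to translate the three failure modes of Proposition~\ref{prop:genus1} back into cone-angle language.

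\textbf{Case $n_0=0$.} The stratum is non-empty only for $k=1$, that is $d=4$ and $\im\rho=0$. This gives exception (1): no cones with $\im\rho\ne0$.

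\textbf{Case $n_0\ge3$.} The stratum is always non-empty, so every such signature is realizable, and none of the listed exceptions has three or more cones.

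\textbf{Case $n_0=2$.} The nonzero $m_i$ are $(m,-m)$ with $m\ne0$, and since both exceed $-4$ we have $|m|\le3$. The stratum is empty exactly when $|\mu|=|m|/d=1$, so I enumerate $|m|\in\{1,2,3\}$ against the allowed values of $d$. Divisibility $d\mid m$ together with $D\subseteq\im\rho$ restricts $d$ to the generators of subgroups containing $D=\langle m\rangle$.
\begin{itemize}
\item $|m|=1$: $D=\Zf$ forces $d=1$, so $|\mu|=1$ and the stratum is empty. The orders $(1,-1)$ are the angles $5\pi/2$ and $3\pi/2$, which is exception (2), with no condition on $\rho$. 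This agrees with Corollary~\ref{cor:odd}.
\item $|m|=3$: again $d=1$, but now $|\mu|=3\ge2$, so the signature is realizable.
\item $|m|=2$: $D=2\Zf$, so $d\in\{1,2\}$. If $d=1$ then $|\mu|=2$ and the signature is realizable. If $d=2$, that is $\im\rho=2\Zf$, then $|\mu|=1$ and the stratum is empty. The orders $(2,-2)$ are the angles $3\pi$ and $\pi$, which is exception (3).
\end{itemize}

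Marked points of order $0$ are carried along throughout, and Proposition~\ref{prop:genus1} explicitly allows them.

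The only point needing care is that each listed exception is actually attained by some admissible $\rho$, and that the converse direction covers every $\rho$. Both follow from the Reduction Lemma (Theorem~\ref{thm:reduction}): every subgroup of $\Zf$ containing $D$ occurs as $\im\rho$. Otherwise the argument is bookkeeping, and I expect no real obstacle.
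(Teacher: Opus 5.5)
Your proposal is correct and matches the paper's proof: apply Theorem~\ref{thm:main} with $k=4/d$ and $\mu=m/d$, then read off the empty cases of Proposition~\ref{prop:genus1}. The only difference is cosmetic: you enumerate $|m|\in\{1,2,3\}$ against the admissible $d$, where the paper solves $|m|=d$ directly. Your closing appeal to the Reduction Lemma is harmless but unnecessary, because Theorem~\ref{thm:main} already decides each signature individually.
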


\begin{proof}
Apply Theorem~\ref{thm:main} with $g=1$, $k=4/d$, $\mu=m/d$ and read
Proposition~\ref{prop:genus1}. The empty cases are: $n_0=0$ with $k>1$, that is no cones and
$d\ne4$, which is (1); and $n_0=2$ with $|\mu|=1$, that is two cones with orders $(m,-m)$
and $|m|=d$. Since $d\in\{1,2,4\}$ and $m>-4$, the possibilities are $|m|=d=1$, giving cone
angles $3\pi/2$ and $5\pi/2$, and then $D=\Zf$ forces $d=1$, so the case is independent of
$\rho$, which is (2); and $|m|=d=2$, giving cone angles $\pi$ and $3\pi$ with
$\im\rho=2\Zf$, which is (3). The value $|m|=d=4$ would require an order $-4$, which is
excluded.
\end{proof}

Case (2) is the exception of~\cite{shen2022} and the non-existence of a
$3,5$-quadrangulation of the torus of~\cite{ikrss2013}; here it comes out in one line, as
the absence of a nonzero $1$-torsion point. Case (3) is the empty stratum $Q(1,-1)$
of~\cite{masur1993}. It is worth contrasting (3) with the \emph{realizable} signature ``two
cones of angles $\pi$ and $3\pi$, some odd holonomy'': the cone angles alone do not decide.

\subsection{The five exceptions}\label{sec:five}

\begin{theorem}\label{thm:classification}
A holonomy signature $(g,m,\rho)$ is realizable if and only if it is not one of the five
families of Table~\ref{tab:exceptions}. For $g\ge1$ these five are precisely the
exceptional list of (S7), read through Theorem~\ref{thm:main}; for $g=0$ there are none,
by Proposition~\ref{prop:genus0}.
\end{theorem}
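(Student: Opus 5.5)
The plan is to split by genus and, for $g\ge1$, translate (S7) case by case through Theorem~\ref{thm:main}. For $g=0$ there is nothing to do beyond Proposition~\ref{prop:genus0}: every admissible signature is realizable, so no exceptional family has genus zero. For $g\ge1$, let $d$ generate $\im\rho$. By Theorem~\ref{thm:main}, the signature is non-realizable exactly when the stratum of primitive $k$-differentials with $k=4/d\in\{4,2,1\}$ and profile $\mu=(m_1/d,\dots,m_n/d)$ is empty. Before invoking (S7) I will note two things: the hypotheses $a_i>-k$ and $\sum a_i=k(2g-2)$ follow from $m_i>-4$ and Gauss--Bonnet after dividing by $d$; and $d\mid m_i$ holds because $D\subseteq d\Zf$. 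One point of care is that (S7) profiles list only zeros and poles, so I will first discard the orders $m_i=0$ (marked points), which impose nothing on the stratum, and apply (S7) to the nonzero orders only.

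Next I will run through the three cases of (S7) and read each back as a signature.

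Case (i): $g=1$ and $\mu=(1,-1)$. This means two cones with $m=(d,-d)$. The condition $m_i>-4$ rules out $d=4$. With $d=1$ the cone angles are $3\pi/2$ and $5\pi/2$. With $d=2$ the angles are $\pi$ and $3\pi$ and $\im\rho=2\Zf$.

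Case (ii): $g=1$, $\mu=\emptyset$ and $k\ge2$. This is the torus with no cones and $\im\rho\neq0$.

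Together, (i) and (ii) recover Corollary~\ref{cor:torus}, which gives an independent check on the genus-one part through Proposition~\ref{prop:genus1}.

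Case (iii): $g=2$, $k=2$ (so $d=2$, $\im\rho=2\Zf$) and $\mu=(4)$ or $(3,1)$. This gives $m=(8)$, a single cone of angle $6\pi$, or $m=(6,2)$, cones of angles $5\pi$ and $3\pi$, in both cases with $\im\rho=2\Zf$.

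These five readings are exactly the rows of Table~\ref{tab:exceptions}, and I will state them in the language of cone angles and $\im\rho$.

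For the converse direction, every signature outside these five has a non-empty stratum by (S7), hence is realizable by Theorem~\ref{thm:main}. Since Theorem~\ref{thm:main} is already an equivalence, both directions come out together.

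The main obstacle is bookkeeping rather than mathematics, in two places. First, I must check that the signatures in each family are actually admissible. For example, in the case (iii) families, a signature with $\im\rho=2\Zf$ must exist: $D=2\Zf$ there because all $m_i$ are even, with $6\equiv2$ or $8\equiv0$ for $m=(8)$. So for $m=(8)$ we have $D=0$, and the Reduction Lemma guarantees that the orbit with $\im\rho=2\Zf$ is realized by some $v$ of content $2$. Second, I must make sure that allowing marked points of order $0$ does not change the answer. This needs the observation that adding or removing unconstrained smooth marked points preserves non-emptiness of a stratum, by choosing extra regular points. It also explains why the table should describe the families as "exactly these cones, plus any number of marked points".
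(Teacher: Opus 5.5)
Your proposal is correct and follows the paper's proof essentially step for step: Theorem~\ref{thm:main} turns non-realizability for $g\ge1$ into emptiness of the primitive $(4/d)$-differential stratum with orders $m/d$, and the three cases of (S7) unwind to the five rows, with $d=4$ excluded in case (i) by $m_i>-4$, while $g=0$ is Proposition~\ref{prop:genus0}. Your explicit removal of order-zero marked points before invoking (S7) is a piece of care the paper leaves implicit (its quoted case (ii) literally reads $\mu=\emptyset$), and your admissibility check is harmless though not needed for the equivalence; one correction there is that all $m_i$ being even gives only $D\subseteq 2\Zf$, not $D=2\Zf$, as your own $m=(8)$ example shows.
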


\begin{figure}[t]
\centering
\begin{tikzpicture}[font=\footnotesize,>=Stealth,
  cone/.style={circle,fill=black,inner sep=1.7pt}]

\newcommand{\gone}{%
  \draw[thick,fill=black!4,rounded corners=13pt] (-1.15,-0.62) rectangle (1.15,0.62);
  \draw[thick,fill=white] (0,0) ellipse (0.30 and 0.18);}
\newcommand{\gtwo}{%
  \draw[thick,fill=black!4,rounded corners=13pt] (-1.5,-0.62) rectangle (1.5,0.62);
  \draw[thick,fill=white] (-0.68,0) ellipse (0.28 and 0.17);
  \draw[thick,fill=white] ( 0.68,0) ellipse (0.28 and 0.17);}

\begin{scope}
  \node at (0,1.30) {family 1};  \gone
  \node[align=center,font=\scriptsize] at (0,-1.12) {no cones\\ $\im\rho\ne0$};
\end{scope}
\begin{scope}[xshift=3.6cm]
  \node at (0,1.30) {family 2};  \gone
  \node[cone] at (-0.62,0.34) {};  \node[cone] at (0.62,0.34) {};
  \node[align=center,font=\scriptsize] at (0,-1.12) {$3\pi/2$ and $5\pi/2$\\ any $\rho$};
\end{scope}
\begin{scope}[xshift=7.2cm]
  \node at (0,1.30) {family 3};  \gone
  \node[cone] at (-0.62,0.34) {};  \node[cone] at (0.62,0.34) {};
  \node[align=center,font=\scriptsize] at (0,-1.12) {$\pi$ and $3\pi$\\ $\im\rho=2\Zf$};
\end{scope}

\begin{scope}[xshift=1.3cm,yshift=-3.3cm]
  \node at (0,1.30) {family 4};  \gtwo
  \node[cone] at (0,0.36) {};
  \node[align=center,font=\scriptsize] at (0,-1.12) {one cone of $6\pi$\\ $\im\rho=2\Zf$};
\end{scope}
\begin{scope}[xshift=5.9cm,yshift=-3.3cm]
  \node at (0,1.30) {family 5};  \gtwo
  \node[cone] at (-0.55,0.36) {};  \node[cone] at (0.55,0.36) {};
  \node[align=center,font=\scriptsize] at (0,-1.12) {$3\pi$ and $5\pi$\\ $\im\rho=2\Zf$};
\end{scope}
\end{tikzpicture}
\caption{The five unrealizable holonomy signatures of Table~\ref{tab:exceptions}; dots mark
cones. Families 1 to 3 live on the torus and are proved in
Proposition~\ref{prop:genus1}; family~2 is the exception already known
to~\cite{shen2022,ikrss2013}. Families 4 and 5 appear to be new in this setting: a
genus-two surface whose cross field splits globally into two line fields, the middle column
of Figure~\ref{fig:three}, cannot be quadrangulated with a single cone of angle $6\pi$, or
with a pair of cones of angles $3\pi$ and $5\pi$. Every other Gauss--Bonnet-admissible
signature, in every genus, is realizable.}
\label{fig:exceptions}
\end{figure}

\begin{table}[t]
\centering
\begin{tabular}{@{}clllll@{}}
\toprule
 & genus & cone angles & holonomy & reduced stratum & status \\
\midrule
1 & $1$ & none & $\im\rho\ne0$ & none & Prop.~\ref{prop:genus1} \\
2 & $1$ & $3\pi/2,\ 5\pi/2$ & any & primitive $4$-diff. & Prop.~\ref{prop:genus1} \\
3 & $1$ & $\pi,\ 3\pi$ & $\im\rho=2\Zf$ & $Q(1,-1)$ & Prop.~\ref{prop:genus1} \\
4 & $2$ & $6\pi$ & $\im\rho=2\Zf$ & $Q(4)$ & (S7)(iii) \\
5 & $2$ & $3\pi,\ 5\pi$ & $\im\rho=2\Zf$ & $Q(1,3)$ & (S7)(iii) \\
\bottomrule
\end{tabular}
\caption{The unrealizable holonomy signatures.}
\label{tab:exceptions}
\end{table}

\begin{proof}
For $g=0$, Proposition~\ref{prop:genus0} says every Gauss--Bonnet-admissible signature is
realizable, and Table~\ref{tab:exceptions} lists none; (S7) is neither available nor needed
there. Assume $g\ge1$ from now on.

By Theorem~\ref{thm:main}, $s$ is realizable exactly when the stratum of primitive
$k$-differentials with orders $\mu=(m_1/d,\dots,m_n/d)$ is non-empty, where $k=4/d$. The
hypotheses of (S7) are met: $\sum_i m_i/d=4(2g-2)/d=k(2g-2)$, and $m_i>-4$ with $d\mid m_i$
gives $m_i/d>-4/d=-k$. So the unrealizable signatures are exactly the exceptional list of
(S7), pulled back along $\mu=m/d$. We unwind its three cases.

\emph{(i) $g=1$, $\mu=(1,-1)$, any $k$.} Then $m=(d,-d)$, cone angles $(d+4)\pi/2$ and
$(4-d)\pi/2$. For $d=1$: angles $5\pi/2$ and $3\pi/2$, family~2, the exception
of~\cite{shen2022,ikrss2013}. For $d=2$: angles $3\pi$ and $\pi$ with $\im\rho=2\Zf$,
family~3. For $d=4$ the datum $m=(4,-4)$ violates $m_i>-4$ and does not arise.

\emph{(ii) $g=1$, $\mu=\emptyset$, $k\ge2$.} No cones, and $k\ge2$ says $d\in\{1,2\}$, that
is $\im\rho\ne0$, family~1. For $k=1$, that is $d=4$, the stratum is the flat torus and is
non-empty; this is why family~1 carries the hypothesis $\im\rho\ne0$.

\emph{(iii) $g=2$, $k=2$, $\mu=(4)$ or $(3,1)$.} Here $d=2$, so $\im\rho=2\Zf$ and $m=2\mu$.
From $\mu=(4)$: $m=(8)$, one cone of angle $6\pi$, family~4. From $\mu=(3,1)$: $m=(6,2)$,
cones of angles $5\pi$ and $3\pi$, family~5.

Those are the five rows of Table~\ref{tab:exceptions}, and for $g\ge1$ (S7) says there is
nothing else, for any $d$.
\end{proof}

The five are drawn in Figure~\ref{fig:exceptions}. Families 4 and 5 appear to be new in this
setting. In the language of cross fields: \emph{a genus-two surface carrying a cross field
whose holonomy along every homology loop is a multiple of $\pi$, equivalently one that
splits globally into a pair of line fields, cannot be quadrangulated if its singularities
are a single cone of angle $6\pi$, or a pair of cones of angles $3\pi$ and $5\pi$.} Both
have even cone orders, so they lie outside the gcd condition of~\cite{shen2022} and
contradict nothing in it; they populate the gap it left, and \S\ref{sec:examples} works
through that gap.

\section{Explicit square-tiled realizations}\label{sec:construction}

Theorem~\ref{thm:classification} settles non-emptiness by citation. This section does
something a citation does not: it gives an explicit constructive procedure in the
square-tiled model of Remark~\ref{rem:mesh}, where a mesh is its own certificate, and it is
where every explicit witness in this paper comes from.

\subsection{Two local surgeries}\label{sec:surgeries}

\begin{lemma}[Splitting Lemma]\label{lem:split}
Let $M$ be a closed quad mesh, $v$ a vertex of valence $w$, and $e$ a \emph{loop} at
$v$, that is an edge both of whose endpoints are $v$. Let $\gamma$ be the angular gap
of $e$: the number of corners at $v$ strictly between the two ends of $e$, counted
counterclockwise from the outgoing end. Cut $M$ along $e$ and glue one unit square
into the slit as follows (Figure~\ref{fig:split}). In the dart model of
Remark~\ref{rem:mesh}, let $x$ be a dart of $e$ and $\alpha(x)$ its partner, let the new
square have darts $y_0,\dots,y_3$ with $y_s$ its side $s$, and set
\[
  \alpha'(x)=y_2,\qquad \alpha'(\alpha(x))=y_0,\qquad \alpha'(y_1)=y_3,
\]
leaving $\alpha$ unchanged elsewhere. The two halves of the cut edge are thus attached
to \emph{opposite} sides of the new square, which is the half-turn and is what makes the
move split rather than merely subdivide, while the two leftover sides $y_1,y_3$, also
opposite, are glued \emph{to each other by a translation}. The result $M'$ is a closed
quad mesh with
\begin{enumerate}[label=\emph{(\roman*)},itemsep=1pt]
  \item one more square;
  \item the same genus;
  \item the same vertices, except that $v$ is replaced by two vertices of valences
  $\gamma+2$ and $w-\gamma+2$;
  \item the same $\im\rho$.
\end{enumerate}
\end{lemma}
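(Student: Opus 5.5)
The plan is to verify the four claims directly in the dart model of Remark~\ref{rem:mesh}, since every quantity involved (squares, vertices, genus, $\rho$) is read off from $(\sigma,\alpha')$. First I check that $\alpha'$ is a fixed-point-free involution on the enlarged dart set $\{0,\dots,4N-1\}\cup\{y_0,\dots,y_3\}$. This is immediate: the pair $\{x,\alpha(x)\}$ is replaced by the three pairs $\{x,y_2\}$, $\{\alpha(x),y_0\}$, $\{y_1,y_3\}$. So $M'$ is a closed quad mesh, and (i) holds. It is connected because the new square is attached to the old mesh.

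For (iii) I compute the orbits of $\nu'=\alpha'\circ\sigma^{-1}$ and compare them with those of $\nu$. Away from the darts whose $\sigma^{-1}$-image lies in $\{x,\alpha(x),y_0,\dots,y_3\}$, the maps $\nu'$ and $\nu$ agree, so every vertex other than $v$ keeps its dart cycle and its valence. The $\nu$-orbit of $v$ is a cyclic sequence of $w$ darts, and the two ends of $e$ are the two places where it passes through $\sigma(x)$ and $\sigma(\alpha(x))$. These two places cut the cycle into arcs carrying $\gamma$ and $w-\gamma$ corners respectively.

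The new gluing diverts each of the two arcs into the new square instead of across $e$. Following $\nu'$ from $\sigma(x)$, one enters $y_2$, rotates to a neighbouring side of the new square, crosses the translation gluing $y_1\leftrightarrow y_3$, and returns to the old mesh at the other end of $e$. On the way the orbit picks up exactly two corners of the new square. This closes off one arc into a cycle of length $\gamma+2$ and the other into a cycle of length $w-\gamma+2$. The main bookkeeping obstacle is this explicit orbit trace: one has to get the orientation conventions of $\sigma$ versus $\sigma^{-1}$ right, and confirm that the two arcs are separated rather than reconnected. I would first write out $\nu'$ on the six or so affected darts, and check the count against the corner total $w+4$.

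For (ii) I use Euler characteristic. There is one new face and one new vertex. For edges, the single edge $\{x,\alpha(x)\}$ becomes three, so there are two new edges. Hence $\Delta\chi=1-2+1=0$, and since $M'$ is connected and oriented, the genus is unchanged. For (iv) I use the dual-graph description of $\rho$. The dual graph of $M'$ is that of $M$ with the dual edge across $e$ subdivided by a new node, namely the new square, plus a self-loop at that node dual to the $y_1y_3$ edge. The formula $r=(t-s+2)\bmod 4$ gives the rotation of the path $x\to y_2$, then across the square, then $y_0\to\alpha(x)$. Because $y_0$ and $y_2$ are opposite sides, this is $(2-s+2)+(t-0+2)\equiv t-s+2$, the old rotation across $e$. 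The self-loop is a translation gluing, so it has rotation $0$. The cycles of the new dual graph are therefore the old cycles, with unchanged $\rho$, together with one extra generator of $\rho=0$. Since these cycles generate $H_1(M'\setminus C')$, it follows that $\im\rho'=\im\rho$.
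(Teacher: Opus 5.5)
Your proposal is correct and follows the paper's proof essentially step for step: you check that the gluing closes up, count valences, compute $\Delta\chi=1-2+1=0$, and run the dual-graph argument with the same rotation identity $(2-s+2)+(t-0+2)\equiv t-s+2$ and the rotation-$0$ self-loop; the only difference is that you count valences by tracing $\nu'$-orbits where the paper argues with the cut-open link, and that trace does check out, namely $\sigma(x)\mapsto y_2\mapsto y_3\mapsto x$ and $\sigma(\alpha(x))\mapsto y_0\mapsto y_1\mapsto\alpha(x)$, so the two arcs are separated and not reconnected. The one thing worth stating explicitly is that $\sigma(x)$ and $\sigma(\alpha(x))$ both lie in the $\nu$-orbit of $v$ precisely because $e$ is a loop, which is where that hypothesis enters your argument.
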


\begin{proof}
\emph{Cutting.} $e$ is a geodesic segment from $v$ to itself, so cutting along it opens a
slit: the two copies of $e$ become boundary, and the cyclic order of the $w$ corners at $v$
is severed at the two ends of $e$, leaving two boundary arcs with $\gamma$ and $w-\gamma$
corners.

\emph{Filling.} Sides $y_2,y_0$ of the new square are glued to the two copies of $e$, both
unit geodesics, and attaching them to opposite sides is the half-turn; then $y_1,y_3$ are
glued to each other, leaving no boundary. That last gluing is a \emph{translation}: by
Remark~\ref{rem:mesh} its rotation is $(3-1+2)\equiv0$. This is what the lemma turns on and
is not optional; gluing $y_1$ to $y_3$ by a half-turn would put $2$ into $\im\rho'$ and
destroy~(iv) whenever $\im\rho=0$.

\emph{Vertices.} The square contributes two corners to each end of the slit, giving valences
$\gamma+2$ and $w-\gamma+2$. No other vertex is touched, since the surgery happens inside
the closed star of $v$. This is where the hypothesis that $e$ is a \emph{loop} is used, and
it cannot be dropped: cutting an edge with an endpoint $u\ne v$ adds corners at $u$.

\emph{Genus.} $F$ and $V$ each rise by one and $E$ by two, the cut edge being replaced by
the two sides glued to the square together with the self-glued edge; so $\chi$ is unchanged.

\emph{Holonomy.} Old dual cycles survive, so $\im\rho\subseteq\im\rho'$, and their values
survive too: if the old gluing joined side $s$ to side $t$, with rotation
$r_{\mathrm{old}}=(t-s+2)$, the two new edges carry $r_1=(2-s+2)\equiv-s$ and
$r_2=(t-0+2)\equiv t+2$, summing to $r_{\mathrm{old}}$. Conversely the new dual graph is the
old one with that edge subdivided, which does not change the cycle space, plus one loop at
the new node, dual to $\{y_1,y_3\}$ and of rotation $0$. Hence $\im\rho'=\im\rho$.
\end{proof}

Part~(iv) is the delicate one when $\im\rho=0$: there the leftover gluing is the only place
a rotation could enter, and it does not.

So one loop of gap $\gamma$ realizes the split $(\gamma+2,\,w+2-\gamma)$, and as $\gamma$
runs over $1,\dots,w-1$ the smaller part runs over $3,\dots,w+1$. The extreme split
producing valence $1$ comes from varying the surgery rather than the loop, and it behaves
differently enough to deserve its own statement.

\begin{figure}[t]
\centering
\resizebox{\textwidth}{!}{%
\begin{tikzpicture}[scale=1.0,>=Stealth,font=\footnotesize,
  cor/.style={draw,thick},
  end/.style={circle,fill=black,inner sep=1.5pt},
  new/.style={circle,draw,thick,fill=white,inner sep=1.2pt}]

\begin{scope}
  \node at (0,1.85) {(a) link of $v$};
  \draw[cor] (0,0) circle (1.0);
  \foreach \a in {10,40,...,350}{\draw (\a:0.88) -- (\a:1.12);}
  \node[end] at (100:1.0) {}; \node[end] at (280:1.0) {};
  \node at (100:1.42) {$e$}; \node at (280:1.42) {$e$};
  \draw[<->,gray] (118:0.60) arc (118:262:0.60);
  \draw[<->,gray] (82:0.60) arc (82:-82:0.60);
  \node[gray] at (-1.62,0) {$\gamma$};
  \node[gray] at (1.72,0) {$w\!-\!\gamma$};
  \node[align=center] at (0,-1.95) {the $w$ corners at $v$;\\ $e$ meets the link twice};
\end{scope}

\begin{scope}[xshift=4.6cm]
  \node at (0,1.85) {(b) cut along $e$};
  \draw[cor] (108:1.0) arc (108:272:1.0);
  \draw[cor] (82:1.0) arc (82:-82:1.0);
  \foreach \a in {118,148,...,262}{\draw (\a:0.88) -- (\a:1.12);}
  \foreach \a in {10,40,70,-10,-40,-70}{\draw (\a:0.88) -- (\a:1.12);}
  \node[end] at (108:1.0) {}; \node[end] at (272:1.0) {};
  \node[end] at (82:1.0) {};  \node[end] at (-82:1.0) {};
  \node[gray] at (-1.62,0) {$\gamma$};
  \node[gray] at (1.72,0) {$w\!-\!\gamma$};
  \node[align=center] at (0,-1.95) {two open arcs,\\ $\gamma$ and $w-\gamma$ corners};
\end{scope}

\begin{scope}[xshift=9.0cm,yshift=-0.1cm]
  \node at (0,1.95) {(c) the square};
  \draw[thick,fill=black!8] (-0.58,-0.58) rectangle (0.58,0.58);
  \node at (0,0.84) {$y_2$};  \node at (0,-0.84) {$y_0$};
  \node at (-0.84,0) {$y_3$}; \node at (0.84,0) {$y_1$};
  \draw[<->,thick] (-1.16,0.42) to[out=160,in=200] (-1.16,-0.42);
  \node[align=center,font=\scriptsize] at (0,-1.42)
    {$y_1\!\sim\! y_3$: opposite,\\ hence a \emph{translation}};
  \node[align=center] at (0,-2.25) {$y_2,y_0$ take the\\ two copies of $e$};
\end{scope}

\begin{scope}[xshift=13.1cm]
  \node at (0,1.85) {(d) result};
  \draw[cor] (0,0.66) circle (0.50);
  \foreach \a in {20,80,...,320}{\draw ([shift={(0,0.66)}]\a:0.38) -- ([shift={(0,0.66)}]\a:0.62);}
  \node[new] at ([shift={(0,0.66)}]0:0.50) {};
  \node[new] at ([shift={(0,0.66)}]180:0.50) {};
  \node[anchor=west] at (0.80,0.66) {$\gamma+2$};
  \draw[cor] (0,-0.80) circle (0.50);
  \foreach \a in {20,80,...,320}{\draw ([shift={(0,-0.80)}]\a:0.38) -- ([shift={(0,-0.80)}]\a:0.62);}
  \node[new] at ([shift={(0,-0.80)}]0:0.50) {};
  \node[new] at ([shift={(0,-0.80)}]180:0.50) {};
  \node[anchor=west] at (0.80,-0.80) {$w-\gamma+2$};
  \node[align=center] at (0,-1.95) {each arc gains two\\ corners and closes};
\end{scope}
\end{tikzpicture}}
\caption{The splitting surgery of Lemma~\ref{lem:split}, seen in the link of the cone.
(a) The $w$ corners at $v$ form a circle, which the loop $e$ meets twice, separating
$\gamma$ corners from $w-\gamma$. (b) Cutting along $e$ opens the circle into two arcs.
(c) One square is glued in. Its \emph{opposite} sides $y_2,y_0$ take the two copies of $e$,
which is the half-turn, and its two remaining sides $y_1,y_3$, also opposite, are glued to
each other. The rotation of that last gluing is $(3-1+2)\equiv0$, a translation; by a
half-turn it would be $2$, which would enter $\im\rho$ and break part~(iv). (d) Each arc
receives two of the square's four corners, drawn hollow, and closes up, giving cones of
valence $\gamma+2$ and $w-\gamma+2$.}
\label{fig:split}
\end{figure}

\begin{lemma}[Odd twist]\label{lem:oddtwist}
In the situation of Lemma~\ref{lem:split}, set instead
\[
  \alpha'(x)=y_1,\qquad \alpha'(\alpha(x))=y_0,\qquad \alpha'(y_2)=y_3 ,
\]
so that the two halves of the cut edge go to \emph{adjacent} sides of the new square and
the leftover sides $y_2,y_3$, also adjacent, are glued to each other. Then $M'$ is a
closed quad mesh with one more square, the same genus, every other vertex untouched, and
$v$ replaced by two vertices of valences
\[
  1 \quad\text{and}\quad w+3 ,
\]
\emph{independently of the gap $\gamma$}. Its holonomy is $\im\rho'=\Zf$, whatever
$\im\rho$ was.
\end{lemma}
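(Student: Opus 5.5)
The plan is to reuse the proof of Lemma~\ref{lem:split} step by step, changing only the bookkeeping for the new square. There are four things to check: that $M'$ is closed, that its genus is unchanged, what happens at the vertices, and what happens to the holonomy. Closedness and the square count are immediate. The darts $x,\alpha(x)$ now go to $y_1,y_0$, and the sides $y_2,y_3$ are glued to each other, so $\alpha'$ is again a fixed-point-free involution. Exactly one square has been added.

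\emph{Vertices.} I would argue with the corners of the new square, writing $c_{s,s+1}$ for the corner between sides $y_s$ and $y_{s+1}$. First, gluing the adjacent sides $y_2$ and $y_3$ to each other folds the square around $c_{23}$. That corner is then met by no other corner, so it becomes a vertex of valence $1$. In the dart model this is a fixed point of $\nu=\alpha'\circ\sigma^{-1}$ at $y_3$, since $\sigma^{-1}(y_3)=y_2$ and $\alpha'(y_2)=y_3$. Second, the same fold identifies the two far endpoints of that gluing, $c_{12}$ and $c_{30}$. Third, the slit left by cutting $e$ has two endpoints, and these are the two ends of the two link arcs carrying $\gamma$ and $w-\gamma$ corners. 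The corner $c_{01}$ is attached at one endpoint, and the identified pair $c_{12}=c_{30}$ at the other. Because $y_0$ and $y_1$ are adjacent rather than opposite, the two arcs are no longer closed up separately. Following the $\nu$-orbit starting from a dart at $v$, one runs through the $\gamma$ corners, then $c_{01}$, then the $w-\gamma$ corners, then $c_{12}$ and $c_{30}$, and returns to the start. This single orbit has length $\gamma+(w-\gamma)+3=w+3$, with no dependence on $\gamma$. As in Lemma~\ref{lem:split}, the surgery takes place inside the closed star of the loop $e$, so no other vertex is affected. I expect this orbit trace to be the one real obstacle: it must show that the two arcs merge into one vertex rather than closing up separately, and the cleanest way is to write out $\nu$ explicitly on $y_0,\dots,y_3$, $x$ and $\alpha(x)$.

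\emph{Genus.} The count is $F\mapsto F+1$ and $V\mapsto V+1$, since $v$ is replaced by two vertices. For edges, $E\mapsto E+2$: the cut edge becomes the two edges $\{x,y_1\}$ and $\{\alpha(x),y_0\}$, and there is also the new edge $\{y_2,y_3\}$. Hence $\chi$ is unchanged, and so is the genus. The valence sum also checks out: $(w+3)+1=w+4$, which matches the four new corners.

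\emph{Holonomy.} By Remark~\ref{rem:mesh}, the self-gluing of $y_2$ to $y_3$ has rotation $(3-2+2)\equiv3$. In the dual graph this gluing is a loop at the new node, so it gives a dual cycle with $\rho=3$, which generates $\Zf$. Therefore $\im\rho'=\Zf$, and this holds whatever $\im\rho$ was. Since that one loop already generates $\Zf$, there is no need to check, as in Lemma~\ref{lem:split}(iv), that the other cycle values are preserved.
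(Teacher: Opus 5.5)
Your proposal is correct and follows the paper's proof step for step. The fold at the corner between $y_2$ and $y_3$ gives valence $1$, the two slit arcs merge with the three remaining corners into a single vertex of valence $w+3$, Euler's count is $F,V\mapsto{+1}$ and $E\mapsto{+2}$, and the self-gluing of adjacent sides has odd rotation $(3-2+2)\equiv3$, which forces $\im\rho'=\Zf$. The explicit trace you anticipate does go through: $\nu'(y_3)=y_3$, while $\sigma(x)\mapsto y_1\mapsto\alpha(x)$ and $\sigma(\alpha(x))\mapsto y_0\mapsto y_2\mapsto x$ splice the two old arcs into one cycle.
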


\begin{proof}
Cutting is as before. For the filling, the square's four corners are $c_0,\dots,c_3$ with
$c_t$ between sides $y_{t-1}$ and $y_t$. Gluing $y_2$ to $y_3$ folds the corner $c_3$
between them onto itself, so $c_3$ becomes a vertex with exactly one corner: valence $1$.
The same identification joins the far end of one slit arc to the far end of the other, so
the two arcs, carrying $\gamma$ and $w-\gamma$ corners, are no longer separated; together
with the three remaining corners $c_0,c_1,c_2$ they form a single vertex of
$\gamma+(w-\gamma)+3=w+3$ corners. This is why $\gamma$ drops out, and Euler's count is as
before: $F$ and $V$ rise by one, $E$ by two.

For the holonomy, the rotation across the leftover edge is $(3-2+2)\equiv3\pmod4$, an odd
number, because $y_2$ and $y_3$ are \emph{adjacent} rather than opposite. That edge is dual
to a loop of the dual graph, so $3\in\im\rho'$, and a subgroup of $\Zf$ containing an odd
element is $\Zf$.
\end{proof}

So the odd twist is available exactly when $d=1$, and that is exactly when it is needed,
since valence $1$ means order $-3$, which is odd, and $d\mid m_i$ then forces $d=1$. The
constraint is automatic, not a side condition to check.

For valence $2$ the algorithm uses a third move: cut \emph{two} edges and rejoin the eight
loose sides by one of the $105$ pairings. We claim no lemma for it. It is not a subdivision,
since the old dual cycles need not survive, so it can lower $\im\rho$ as well as raise it,
and different pairings give different valence multisets. Its outputs are admitted only when
Theorem~\ref{thm:construction} applies to them, which is why the construction needs no
general statement about the move itself.

\subsection{Base meshes}\label{sec:base}

\begin{lemma}[Base meshes]\label{lem:base}
For every $g\ge2$ the following gluing of $N=2g-1$ unit squares is a closed quad mesh
of genus $g$ with a \emph{single} vertex, of valence $8g-4$, and $\im\rho=\Zf$.
Numbering the darts $0,\dots,4N-1$ so that dart $4f+s$ is side $s$ of square $f$, as in
Figure~\ref{fig:basemesh} for $g=2$, glue
\[
\begin{array}{l@{\qquad}l}
  (0,2),\ (1,4),\ (3,5) & \text{\emph{first cap}}\\
  (s,s+2),\ (s+1,s+4),\ (s+3,s+6),\ (s+5,s+7) & \text{\emph{blocks}},\ s=6+8b,\
    b=0,\dots,g-3\\
  (t,t+2),\ (t+1,t+4),\ (t+3,t+5) & \text{\emph{last cap}},\ t=4N-6.
\end{array}
\]
Replacing this by the staircase pattern
\[
  (0,2),\qquad (2i-1,\,2i+2)\ \ \text{for } i=1,\dots,2N-2,\qquad (4N-3,\,4N-1)
\]
gives, for the same $N$, a one-vertex mesh of genus $g$ with \emph{trivial} holonomy:
a translation surface in the minimal abelian stratum $\mathcal H(2g-2)$.
\end{lemma}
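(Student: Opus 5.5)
The plan is to verify each claim directly in the dart model of Remark~\ref{rem:mesh}. The genus follows from the single-vertex claim by Euler's formula, so the real work is the vertex count.

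\emph{Closed mesh.} First I check that each listed pattern is a fixed-point-free involution on $\{0,\dots,4N-1\}$, so that the mesh is closed. The caps use darts $0,\dots,5$ and $4N-6,\dots,4N-1$. Block $b$ uses $s,\dots,s+7$ with $s=6+8b$. For $b=0,\dots,g-3$ these ranges tile $6,\dots,4N-7$, because $6+8(g-2)=8g-10=4N-6$. Each range is partitioned by its pairs. The staircase pairs $2i-1\leftrightarrow 2i+2$ for $i=1,\dots,2N-2$ cover the odd darts $1,3,\dots,4N-5$ and the even darts $4,6,\dots,4N-2$. Together with $(0,2)$ and $(4N-3,4N-1)$ this is every dart exactly once.

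\emph{Genus from the vertex count.} With $F=N$ and $E=2N$, a single vertex gives $\chi=1-2N+N=1-N=2-2g$. This holds exactly because $N=2g-1$, so genus $g$ is immediate once $V=1$. Valence $4N=8g-4$ then follows, since every corner lies at the one vertex.

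\emph{Holonomy.} I read $\im\rho$ off the dual graph using $r=(t-s+2)\bmod4$.

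For the first pattern, I use the first cap. Dart $1$ is side $1$ of square $0$ and dart $4$ is side $0$ of square $1$, so the dual edge $(1,4)$ has $r=0-1+2=1$. Dart $3$ is side $3$ of square $0$ and dart $5$ is side $1$ of square $1$, so $(3,5)$ has $r=1-3+2=0$. These two dual edges form a cycle between squares $0$ and $1$ with rotation $1-0=1$, which is odd. Hence $\im\rho=\Zf$.

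For the staircase, I exhibit a potential $\phi\colon\{\text{squares}\}\to\Zf$ with $r=\phi(h)-\phi(f)$ on every glued pair; this makes every dual cycle have rotation $0$.
\begin{itemize}
\item The self-gluings $(0,2)$ and $(4N-3,4N-1)$ join opposite sides of a single square, so $r=0$ and they impose no condition.
\item A pair $(2i-1,2i+2)$ joins side $(2i-1)\bmod4$ of square $\lfloor(2i-1)/4\rfloor$ to side $(2i+2)\bmod4$ of square $\lfloor(2i+2)/4\rfloor$.
\item Its rotation is $r=(2i+2)-(2i-1)+2\equiv 1$, adjusted by $\mp4$ in the side indices when a square boundary is crossed, which does not change it mod $4$. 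So every such edge has rotation $1$, pointing from lower to higher dart, up to orientation bookkeeping.
\end{itemize}
I expect the dual graph of these edges to be a path or tree with the self-loops attached, each square $f$ joined to $f+1$ in a controlled way. If it is a tree, a potential exists trivially. If it has a cycle, I will sum the rotations along it and check they cancel. I will compute this carefully for small $i$ and confirm the pattern is periodic in $f$.

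\emph{Single vertex: the main obstacle.} I have to show that $\nu=\alpha\circ\sigma^{-1}$ is a single $4N$-cycle for every $g$. I will do this by induction on the number of blocks. For the base case $g=2$, $N=3$, the two caps alone give $12$ darts, and I compute the orbit by hand. For the inductive step, inserting one block shifts the last cap's darts by $8$. The pairs inside a block are the translate by $s$ of one fixed pattern. So $\nu$ restricted to the darts of a block, with entry from the previous square and exit to the next, is the same permutation for every $b$, up to translation. I will show that the orbit enters the block, runs through all $8$ of its darts in one segment, and leaves toward the next block. Inserting a block then splices $8$ new darts into the single cycle without splitting it.

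The staircase is handled the same way, and is easier because its pattern is uniform in $i$. There I expect $\nu$ to act as roughly $+3$ or $+4$ on darts, with wrap-around supplied by the two end self-gluings, and I will track residues mod $4$ to confirm a single orbit. The delicate point in both patterns is the interface where a cap meets a block: there the local shape differs, and the orbit might close early. I would first check that interface explicitly for $g=3$ before writing the general induction.
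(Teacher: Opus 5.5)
\textbf{Gap: a block is not traversed in one run.} The gap is in the single-vertex induction, which you yourself flag as the main obstacle. You plan to show that the orbit enters a block, runs through all $8$ of its darts in one segment, and leaves toward the next block. That is false.

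A block starts at $s\equiv 2\pmod 4$ and so straddles three squares. Computing $\nu=\alpha\circ\sigma^{-1}$ on it gives two disjoint arcs:
\[
  s-2\mapsto s+4\mapsto s+6\mapsto\alpha(s+9),
  \qquad
  s+8\mapsto s+5\mapsto s+1\mapsto s+2\mapsto s+7\mapsto s+3\mapsto s\mapsto\alpha(s-1).
\]
The first arc enters from the previous square and leaves toward the next; the second enters from the next and leaves toward the previous. Already at $g=3$ the orbit is $0,5,1,2,4,10,12,18,19,15,16,17,14,11,7,8,13,9,6,3$. There the block darts $6,\dots,13$ appear as $10,12$ and, much later, as $11,7,8,13,9,6$. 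The cycle goes out through every block on the short arcs, turns round in the last cap, and comes back on the long arcs.

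So the induction cannot splice in one $8$-dart segment. The missing ingredient is that subdividing two \emph{distinct} steps of a single cycle by two disjoint arcs still leaves a single cycle. The paper sets this up by comparing genus $g$ and $g+1$ on the darts below $4N$. Only the cap pair $(t+3,t+5)$ changes there, so only $\nu(t+4)$ and $\nu(t+2)$ change. The steps $t+4\to t+5$ and $t+2\to t+3$ are then replaced by arcs through seven and through one of the eight new darts.

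In your own framing, the clean repair is to induct on the \emph{tail}: the blocks from $s$ onward together with the last cap. Show that the tail is one run entering at $s+4$ from $s-2$ and leaving from $s$ to $\alpha(s-1)$, exactly as the last cap alone does. The first cap then closes the cycle through $0\mapsto5\mapsto1\mapsto2\mapsto4$ and $6\mapsto3\mapsto0$.

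\textbf{Staircase: vertex count and holonomy.} The staircase has the same out-and-back shape, not the $+3/+4$ drift you anticipate. Away from the ends,
\[
  \nu(x)=x+6 \quad (x\equiv0,\ x\le4N-8), \qquad
  \nu(x)=x+2 \quad (x\equiv2,\ x\le4N-6), \qquad
  \nu(x)=x-4 \quad (x \text{ odd},\ x\ge5),
\]
with turnarounds $\nu(4N-4)=4N-3$, $\nu(4N-2)=4N-1$, $\nu(1)=2$ and $\nu(3)=0$. Because $N=2g-1$ is odd, the forward pass from $0$ visits the even darts $\equiv0,6\pmod8$ and stops at $4N-4$. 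The forward pass from $2$ visits those $\equiv2,4\pmod8$ and stops at $4N-2$. The two backward passes sweep residues $1$ and $3$. This gives a single $4N$-cycle directly, a valid non-inductive alternative to the paper's splice.

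For the staircase holonomy, the dual graph is not a tree: squares $f$ and $f+1$ are joined twice, by the pairs $(4f+1,4f+4)$ and $(4f+3,4f+6)$. Your observation that every middle edge has rotation $1$ from the lower dart's square to the next one already finishes the argument. It makes $\phi(f)=f$ a potential, which is the paper's $r_f=f\bmod4$.

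\textbf{What is already correct.} The closedness check, the Euler count, and the odd dual cycle $(1,4),(3,5)$ for the first pattern are correct and match the paper.
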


\begin{proof}
Each pattern uses every dart exactly once, so it defines a closed mesh, with $2N=4g-2$
edges, matching $3+4(g-2)+3$ in the first case.

\emph{One vertex.} This says $\nu=\alpha\circ\sigma^{-1}$ is a single $4N$-cycle; we induct
on $g$. For $g=2$, $\nu$ runs $0,5,1,2,4,10,11,7,8,9,6,3$ and closes: one $12$-cycle.

For the step put $t=4N-6$. Below $4N$ the genus-$(g+1)$ pattern differs from the genus-$g$
one in a single pair: the cap's $(t+3,t+5)$ becomes $(t+3,t+6)$ and $(t+5,t+7)$, while
$(t,t+2)$ and $(t+1,t+4)$ survive as the first two pairs of the new block. Since
$\sigma^{-1}(t+4)=t+3$ and $\sigma^{-1}(t+2)=t+5$, and no other dart maps into
$\{t+3,t+5\}$, exactly two values of $\nu$ change,
\[
  \nu(t+4):\ t+5\mapsto t+6,\qquad \nu(t+2):\ t+3\mapsto t+7,
\]
and on the eight new darts the new block and cap give
\[
  t+6\mapsto t+12\mapsto t+13\mapsto t+9\mapsto t+10\mapsto t+11\mapsto t+8\mapsto t+5,
  \qquad t+7\mapsto t+3 .
\]
So the old cycle's step $t+4\to t+5$ is replaced by an arc through seven new darts and its
step $t+2\to t+3$ by the single new dart $t+7$. Replacing two steps of a cycle by two
disjoint arcs covering all eight new darts leaves one cycle, of length $4N+8$.

The staircase is the same argument: with $M=4N$, the two changed values are at $M-4$ and
$M-2$, and the steps $M-4\to M-3$ and $M-2\to M-1$ become
$M-4\to M+2\to M+4\to M+5\to M+1\to M-3$ and $M-2\to M\to M+6\to M+7\to M+3\to M-1$.

Euler's formula then gives $\chi=1-2N+N=1-N=2-2g$.

The holonomy is not forced by the cone, whose order $8g-8$ is divisible by $4$. Both claims
about $\im\rho$ have closed-form proofs, valid for every $g$.

For the staircase, assign to square $f$ the chart rotation $r_f=f\bmod4$; we check that $r$
trivialises every transition, that is that $r_{h}-r_{f}$ equals the rotation across each
glued pair, which forces $\im\rho=0$. The pair $(0,2)$ joins sides $0$ and $2$ of square $0$
to itself: rotation $(2-0+2)\equiv0$, and $r_0-r_0=0$. For a middle pair $(2i-1,2i+2)$,
write $i=2j$ or $i=2j+1$. If $i=2j$ the darts are $4(j-1)+3$ and $4j+2$, so the rotation is
$(2-3+2)\equiv1$ and $r_j-r_{j-1}=1$. If $i=2j+1$ they are $4j+1$ and $4(j+1)+0$, so the
rotation is $(0-1+2)\equiv1$ and $r_{j+1}-r_j=1$. The last pair $(4N-3,4N-1)$ joins sides
$1$ and $3$ of square $N-1$: rotation $(3-1+2)\equiv0$, and the difference is $0$. Every
edge is accounted for, so $\rho$ is the coboundary of $r$ and vanishes on every cycle.

For the first pattern it suffices to exhibit one dual cycle of odd rotation, since a
subgroup of $\Zf$ containing an odd element is $\Zf$. The first cap glues $(1,4)$ and
$(3,5)$; darts $1,3$ lie in square $0$ and darts $4,5$ in square $1$, so these two edges
form a cycle of the dual graph. Its rotation is $(0-1+2)+(3-1+2)\equiv1+0=1$, odd. The first
cap is present for every $g\ge2$, so $\im\rho=\Zf$ throughout.
\end{proof}

\begin{figure}[t]
\centering
\begin{tikzpicture}[font=\footnotesize,scale=1.15,
  sq/.style={draw,thick,fill=black!4},
  lb/.style={font=\scriptsize}]

\definecolor{pA}{RGB}{200,60,50}
\definecolor{pB}{RGB}{40,110,190}
\definecolor{pC}{RGB}{30,140,70}
\definecolor{pD}{RGB}{175,110,20}
\definecolor{pE}{RGB}{130,60,170}
\definecolor{pF}{RGB}{0,140,150}
\foreach \f/\x/\b/\r/\t/\l/\cb/\cr/\ct/\cl in
  {0/0/0/1/2/3/pA/pB/pA/pC,
   1/3.0/4/5/6/7/pB/pC/pD/pE,
   2/6.0/8/9/10/11/pD/pF/pE/pF}{
  \begin{scope}[xshift=\x cm]
    \draw[sq] (0,0) rectangle (1.7,1.7);
    \draw[very thick,\cb] (0,0) -- (1.7,0);
    \draw[very thick,\cr] (1.7,0) -- (1.7,1.7);
    \draw[very thick,\ct] (1.7,1.7) -- (0,1.7);
    \draw[very thick,\cl] (0,1.7) -- (0,0);
    \node[lb] at (0.85,0.85) {square $\f$};
    \node[lb,\cb] at (0.85,-0.26) {$\b$};
    \node[lb,\cr] at (1.96,0.85) {$\r$};
    \node[lb,\ct] at (0.85,1.96) {$\t$};
    \node[lb,\cl] at (-0.26,0.85) {$\l$};
  \end{scope}}

\node[align=left] at (10.9,1.15)
  {$\alpha:\ (0,2),\ (1,4),\ (3,5),$\\[2pt] $\phantom{\alpha:\ }(6,8),\ (7,10),\ (9,11)$};
\node[align=left,font=\scriptsize] at (10.9,0.10)
  {one vertex, valence $12$\\ genus $2$,\ \ $\im\rho=\Zf$};
\end{tikzpicture}
\caption{The base mesh of Lemma~\ref{lem:base} at $g=2$: $N=2g-1=3$ unit squares, side $s$
of square $f$ carrying dart $4f+s$, numbered counterclockwise from the bottom. The six
gluings, one colour each, pair every dart once, so the mesh is closed. Here
$\nu=\alpha\circ\sigma^{-1}$ is a single $12$-cycle, so there is exactly one vertex, of
valence $8g-4=12$, a cone of angle $6\pi$ and order $8$. Euler's formula gives
$\chi=1-6+3=-2$, so $g=2$. The edges $(1,4)$ and $(3,5)$ both join squares $0$ and $1$ and
form a dual cycle of rotation $1+0=1$, which is odd and forces $\im\rho=\Zf$. This pair is
the first cap of the general pattern and is present for every $g\ge2$.}
\label{fig:basemesh}
\end{figure}

\begin{remark}[the parity constraint]\label{rem:parity}
Searching for base meshes with $\im\rho\subseteq2\Zf$ is far easier than it looks, because
of a structural observation. The rotation across an edge joining side $s$ to side $t$ is
$(t-s+2)\bmod4$, which is even exactly when $s\equiv t\pmod2$. So if every gluing joins
sides of equal parity, every rotation is even and hence $\im\rho\subseteq2\Zf$. Conversely,
if $\im\rho\subseteq2\Zf$ then the parities of the rotations form a $\ZZ/2$-coboundary, so
after rotating some charts by a quarter turn, which changes $\rho$ by a coboundary and not
$\im\rho$, every gluing does join sides of equal parity. Restricting a search to
parity-preserving involutions therefore loses nothing and cuts the space in half twice over;
it is how the $d=2$ bases below were found. Note also that $\sigma$ reverses parity while
$\alpha$ preserves it, so $\nu$ reverses it and every vertex of such a mesh has even
valence, as it must, since $2\mid m_i$.
\end{remark}

\subsection{The construction and what its output certifies}\label{sec:algorithm}

\begin{algorithm}\label{alg:construction}
Let $g\ge2$, $d\in\{1,2,4\}$, and let $m_1,\dots,m_n$ be integers with $m_i>-4$,
$d\mid m_i$ and $\sum_im_i=4(2g-2)$.
\begin{enumerate}[label=\emph{\arabic*.},itemsep=1pt]
  \item Start from a base mesh with $\im\rho=d\Zf$: Lemma~\ref{lem:base} for $d=1$ and
  $d=4$, for every $g\ge2$. For $d=2$ we have no closed-form family; base meshes are
  recorded for $2\le g\le8$, found by search under Remark~\ref{rem:parity}, so the
  construction as it stands runs in those genera when $d=2$. For $3\le g\le8$ the recorded
  base is the minimal one, with $2g-1$ squares and a single cone of valence $8g-4$, the
  stratum $Q(4g-4)$; only $g=2$ needs two cones, for the reason of Remark~\ref{rem:sharp}:
  $Q(4)$ is empty, so $Q(2,2)$ is used.
  \item Repeatedly pick a cone of the current mesh and split it by one square-insertion
  surgery (Lemma~\ref{lem:split}, Lemma~\ref{lem:oddtwist}, or the figure-eight move), so
  that one of the two pieces is a cone the target asks for.
  \item Backtrack over which cone to split, which target valence to peel off it, and which
  surgery to use.
\end{enumerate}
\end{algorithm}

\begin{theorem}[Certificate theorem]\label{thm:construction}
Let $A$ be any gluing array on $2g-2+n$ squares, in particular one output by
Algorithm~\ref{alg:construction}. Suppose that, read off from $A$ alone, it defines a
closed connected mesh whose vertex valences are $m_i+4$ and whose dual holonomy has image
$d\Zf$. Then $A$ is a seamless parametrization with signature $(m,\rho)$ for some $\rho$
with $\im\rho=d\Zf$; equivalently, a $4$-differential $\eta^{d}$ with $\eta$ primitive of
order $4/d$. Its stratum is therefore non-empty, and $2g-2+n$ is the smallest number of
squares the corner count allows.
\end{theorem}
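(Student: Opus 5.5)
The argument strings together Remark~\ref{rem:mesh}, Lemma~\ref{lem:d1}, Lemma~\ref{lem:d3} and Lemma~\ref{lem:d4}, and then adds a corner count for minimality.

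\emph{Step 1: the array is a seamless parametrization.} Since $A$ is a fixed-point-free involution on $4(2g-2+n)$ darts, the squares glue to a closed surface. It is connected by hypothesis, and its vertices are the orbits of $\nu=\alpha\circ\sigma^{-1}$. By Remark~\ref{rem:mesh}, the unit squares are charts whose transitions are $z\mapsto i^rz+t$ with $r=(t-s+2)\bmod4$. A vertex of valence $v$ is a cone of angle $v\pi/2$, that is, of order $v-4$. The valences are $m_i+4$ by hypothesis, so the orders are exactly $m_i$.

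\emph{Step 2: genus, signature and holonomy.} The genus follows from Euler's formula. The number of squares is $F=2g-2+n$, there are $E=2F$ edges, and $V=n$, so $\chi=n-F=2-2g$. Equivalently, Gauss--Bonnet $\sum m_i=4F-4n=4(2g-2)$ recovers $g$. Next, $\rho$ is the sum of rotations along dual cycles. These cycles generate $H_1(M\setminus C)$ because the dual $1$-skeleton is a deformation retract of $M\setminus C$, so $\rho$ is well defined and its image is the subgroup generated by the dual-cycle rotations, which is $d\Zf$ by hypothesis. The pair $(m,\rho)$ is then a signature in the sense of Definition~\ref{def:signature}. Lemma~\ref{lem:d1} makes it a flat $\Zf$ cone metric, Lemma~\ref{lem:d3} makes it a $4$-differential $q$ with $\divisor(q)=\sum m_ic_i$, and Lemma~\ref{lem:d4} writes $q=\eta^d$ with $\eta$ a primitive $(4/d)$-differential of orders $m_i/d$. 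This shows the stratum is non-empty, and by Theorem~\ref{thm:main} the signature is realizable.

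\emph{Step 3: minimality.} Any square-tiled witness with $N$ squares has $4N$ corners, and these are distributed among the $n$ cones, which have valences $m_i+4$ (possibly including order-$0$ marked points). Hence
\[
  4N=\sum_i(m_i+4)=4(2g-2)+4n ,
\]
so $N=2g-2+n$ is forced for a mesh whose vertex set is exactly $C$. If marked points must be vertices, this is therefore both a lower and an upper bound. I would phrase the claim as: among quad meshes whose vertices are exactly the prescribed cones, the number of squares is determined. Any mesh with additional regular vertices (valence $4$) has strictly more squares, one more per extra vertex.

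The main obstacle is not technical but interpretive. One has to make sure that the hypotheses ``read off from $A$ alone'' (connectivity, valences via $\nu$-orbits, and holonomy via dual cycles) really compute the topological invariants. In particular, the dual cycle space must surject onto $H_1(M\setminus C)$, so that the image computed combinatorially is the true $\im\rho$ rather than a subgroup of it. I would settle this using the deformation-retract statement of Remark~\ref{rem:mesh}, arguing that the complement of the vertices in a closed quad mesh retracts onto the dual graph square by square.
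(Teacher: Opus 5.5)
Your proposal is correct and follows the paper's proof. The first part is Remark~\ref{rem:mesh}, which you unpack through Lemmas~\ref{lem:d1}, \ref{lem:d3} and \ref{lem:d4} and an explicit Euler-characteristic check of the genus. Minimality is the same corner count $4F=\sum_i(m_i+4)+4r$, giving $F=2g-2+n+r$. Your appeal to Theorem~\ref{thm:main} for realizability is superfluous, since the mesh itself is the realization.
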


\begin{proof}
The first statement is Remark~\ref{rem:mesh}: a closed quad mesh is a seamless
parametrization, a vertex of valence $v$ is a cone of order $v-4$, and $\im\rho$ is what the
dual holonomy computes, all of which is determined by $A$ alone.

For minimality, let $M'$ be \emph{any} closed quad mesh realizing the signature. Besides the
$n$ prescribed cones it may carry $r\ge0$ further vertices, which are then regular, of
valence $4$. Every corner belongs to exactly one vertex, so by Gauss--Bonnet
\[
  4F=\sum_i(m_i+4)+4r=4(2g-2)+4n+4r ,
\]
whence $F=2g-2+n+r\ge2g-2+n$. Such an $A$ has exactly $2g-2+n$ squares, so it attains the
bound.
\end{proof}

The point of stating it this way is that the conclusion depends on the \emph{output}, not on
the moves that produced it. Nothing is assumed about which surgeries the algorithm used,
including the figure-eight move, for which no valence formula is claimed, because every
invariant is re-derived from the final dart pairing.

\begin{remark}[what is and is not proved here]\label{rem:status}
Whenever the algorithm returns, it returns an explicit gluing, and
Theorem~\ref{thm:construction} turns that gluing into a complete proof for its signature,
resting on nothing unproved. Three things are \emph{not} proved. Step 2 may in principle get
stuck; ruling that out needs a statement of the form ``a cone of valence $w$ on a genus
$\ge2$ mesh always carries a loop of the required gap, possibly after remeshing'', which the
backtracking search sidesteps in practice but which we have not established. The
figure-eight move, the only local move without a general lemma, has no valence formula,
proved or conjectured; its outputs are admitted only because their invariants are verified
directly. And for $d=2$ base meshes are available only for $2\le g\le8$, by search rather
than in closed form. What the section establishes is therefore an unconditional correctness
statement for whatever the algorithm outputs, together with non-emptiness for the data it
has been run on, not a universally quantified existence theorem.

Since splitting only ever raises the number of cones, targets with no more cones than the
base are out of reach by surgery; those are handled instead by an exhaustive search over all
gluings of the minimal size. A positive result there is a minimal witness; a negative one
rules out only realizations of minimal size, since a larger mesh with extra regular vertices
could still exist. Global non-realizability comes from Theorem~\ref{thm:classification} and
(S7), never from a bounded search.
\end{remark}

\begin{remark}[a sharp check]\label{rem:sharp}
Run over the genus-two quadratic strata, that is $d=2$ and $g=2$, the construction builds
every stratum except $Q(4)$ and $Q(1,3)$, on which it fails. These are exactly the two
that~\cite{masur1993} classifies as empty. In genus $2$ the base mesh for $d=2$ cannot be
the minimal stratum for the same reason: $Q(4)$ is empty, so $Q(2,2)$ is used instead.
\end{remark}

\section{Signatures with non-coprime cone orders}\label{sec:examples}

Theorem~\ref{thm:classification} answers every admissible signature, so in particular it
answers the ones no earlier criterion could. This section pins down that region and works
through it.

\subsection{Where the gcd condition stops}\label{sec:stops}

The hypothesis of~\cite[Prop.~2]{shen2022} is $\gcd(I_1,\dots,I_n)=\tfrac14$ on the cone
indices, the loop holonomies not entering, and with $I_i=-m_i/4$ this reads
\[
  \gcd\nolimits_{\ZZ}(m_1,\dots,m_n)=1 .
\]
It is what makes their rerouting able to hit prescribed turning numbers exactly, and under
it every signature is realizable but the torus with cones $3\pi/2,5\pi/2$. Three
consequences fix the region it leaves open.

\emph{It fails whenever every cone angle is a multiple of $\pi$}, since then every $m_i$ is
even. This is the case~\cite{shen2022} itself singles out, ``indices restricted to multiples
of $\tfrac12$'', called there a realistic scenario, and it is where families 3, 4 and 5 live.

\emph{It fails whenever there is only one cone}, since then $\gcd_{\ZZ}(m)=|m_1|=4(2g-2)$,
which is $1$ for no $g$. A single-cone signature is never covered, whatever its angle.

\emph{It is strictly stronger than $\im\rho=\Zf$.} One odd $m_i$ gives $\gen{m_i}=\Zf$ and
so $\im\rho=\Zf$, but not conversely: at $g=4$ the angles $7\pi/2$ and $25\pi/2$ give
$m=(3,21)$ with $\gcd_{\ZZ}=3$. The difference is exactly the one noted after
Corollary~\ref{cor:odd}, turning numbers in $\tfrac14\ZZ$ against a holonomy class in $\Zf$,
and it means the uncovered region is larger than the middle and right columns of
Figure~\ref{fig:three}: it is all of $\{\gcd_{\ZZ}(m_i)\ne1\}$.

That region is not small. A cross field made of two globally distinguishable line fields has
$\im\rho\subseteq2\Zf$ automatically, since transport around a loop preserves the labelling
of the two families and so turns the cross by $0$ or by $\pi$; then every $m_i$ is even and
the condition cannot fire. Fields of that kind are ordinary: the principal directions away
from the umbilics, where $\kappa_1>\kappa_2$ singles out one of the two lines; anything built
from a quadratic differential; any stripe or ridge-valley pattern. A singularity of index
$i$ appears there as a cone of angle $2\pi(1-i)$, so umbilics of index $+\tfrac12$ and
$-\tfrac12$ are cones of angle $\pi$ and $3\pi$. What~\cite{shen2022} observes is a different
and compatible thing: a cross field \emph{optimized} for smoothness or curvature alignment is
a genuine $4$-symmetry field and almost always acquires an index-$\pm\tfrac14$ singularity,
which restores the condition.

\subsection{What the classification says there}\label{sec:evenregime}

\begin{corollary}\label{cor:evenregime}
Let $s=(g,m,\rho)$ be a Gauss--Bonnet-admissible signature whose cone orders are not
coprime, that is $\gcd_{\ZZ}(m_1,\dots,m_n)\ne1$, so that~\cite[Prop.~2]{shen2022} does
not apply. Then $s$ is realizable except for exactly four signatures:
\[
  \begin{array}{ll}
    g=1,\ \text{no cones},\ \im\rho\ne0; & g=1,\ \text{angles } \pi,\,3\pi,\ \im\rho=2\Zf;\\[2pt]
    g=2,\ \text{angle } 6\pi,\ \im\rho=2\Zf; & g=2,\ \text{angles } 3\pi,\,5\pi,\ \im\rho=2\Zf.
  \end{array}
\]
In particular, on that region: if some $m_i$ is odd, $s$ is realizable; and if $\im\rho=0$,
that is if the cross field lifts to a global vector field, $s$ is realizable, in every genus.
\end{corollary}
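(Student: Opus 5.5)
The plan is to intersect the five families of Theorem~\ref{thm:classification} with the region $\gcd_{\ZZ}(m_1,\dots,m_n)\ne1$, and then read off the two \emph{in particular} clauses from the list that remains. No new existence input is needed. Theorem~\ref{thm:classification} already decides every admissible signature, so on this region a signature is realizable exactly when it avoids Table~\ref{tab:exceptions}. The only work is to decide which rows of the table have non-coprime orders.

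First I will go through the rows one at a time, using the convention that an empty gcd, with no nonzero orders, is $0\ne1$.
\begin{itemize}[itemsep=1pt]
  \item Family~1 has no cones, so $\gcd_{\ZZ}=0$ and it lies in the region.
  \item Family~2 has $m=(1,-1)$, so $\gcd_{\ZZ}=1$ and it lies outside. This is the single exception of~\cite[Prop.~2]{shen2022}.
  \item Family~3 has $m=(2,-2)$, gcd $2$.
  \item Family~4 has $m=(8)$, gcd $8$.
  \item Family~5 has $m=(6,2)$, gcd $2$.
\end{itemize}
Hence exactly families 1, 3, 4 and 5 lie in the region, which gives the displayed list of four.

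One point needs care: marked points of order $0$ do not change a gcd, and each family is stated up to such points. Here I take "exactly two cones" and similar phrases as in the table, so the count is four families. I will also note that the exceptional condition in each family is fixed by the cone angles together with $\im\rho$, so the word "signatures" should be read modulo $\MCG$, via Theorem~\ref{thm:reduction}.

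For the \emph{in particular} clauses I will check the four survivors directly.
\begin{itemize}[itemsep=1pt]
  \item If some $m_i$ is odd, then $D=\Zf$, so $\im\rho=\Zf$. But families 3, 4 and 5 all have $\im\rho=2\Zf$ and even orders. Family~1 has no cones, hence no odd order. So none of the four occurs.
  \item If $\im\rho=0$, i.e.\ $d=4$, then all four are excluded, because each requires $\im\rho\ne0$. This holds in every genus, including $g=0$, by Proposition~\ref{prop:genus0}.
\end{itemize}

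There is no real obstacle in this argument. The only delicate point is bookkeeping: getting the conventions right for empty cone sets and for order-zero marked points, so that the count "exactly four" is correct and family~1 is placed in the region rather than outside it.
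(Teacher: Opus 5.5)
Your proposal is correct and is essentially the paper's proof: you intersect the five families of Table~\ref{tab:exceptions} with the region $\gcd_{\ZZ}(m_i)\ne1$ by computing the gcds $0,1,2,8,2$, and you read the two special clauses off the four families that remain. One small difference is in your favour: you dispose of the $\im\rho=0$ clause by noting that each surviving family requires $\im\rho\ne0$, whereas the paper's primary argument ($4\mid m_i$, ``which none of the four satisfies'') actually holds vacuously for family~1, which has no cones, so there that family is excluded only by its own condition $\im\rho\ne0$ or by the paper's alternative argument through Theorem~\ref{thm:main} and (S7).
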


\begin{proof}
By Theorem~\ref{thm:classification} the unrealizable signatures are the five families of
Table~\ref{tab:exceptions}. Family~2 has $m=(1,-1)$, so $\gcd_{\ZZ}=1$ and it is excluded by
hypothesis; it is the exception~\cite{shen2022} already records. The other four have
$m=\emptyset$, $(2,-2)$, $(8)$ and $(6,2)$, of $\gcd_{\ZZ}$ equal to $0$, $2$, $8$ and $2$,
so all four lie in the region and are the only failures there.

For the two special cases: none of the four has an odd $m_i$, which gives the first; and
$\im\rho=0$ forces $4\mid m_i$, which none of the four satisfies. The second also follows
directly from Theorem~\ref{thm:main}, since $d=4$ makes $k=1$, and $m_i\ge0$ then rules out
all three cases of (S7), so no abelian stratum in play is empty.
\end{proof}

So four of the five unrealizable families lie outside the gcd condition, and the fifth is
the exception it already records. Everything genuinely new in
Theorem~\ref{thm:classification} is in the region this corollary describes.

\begin{table}[t]
\centering
\begin{tabular}{@{}cl@{\quad}c@{\qquad}ccc@{}}
\toprule
 &  & & \multicolumn{3}{c}{$\im\rho$}\\
\cmidrule(l){4-6}
genus & cone angles & $\gcd_{\ZZ}(m_i)$ & $\Zf$ & $2\Zf$ & $0$\\
\midrule
$1$ & none                        & $0$  & $\times$\,(1) & $\times$\,(1) & $\checkmark$\,1\\
$1$ & $\pi,\ 3\pi$                & $2$  & $\checkmark$\,2 & $\times$\,(3) & n/a\\
$1$ & $\pi,\ \pi,\ 3\pi,\ 3\pi$   & $2$  & $\checkmark$\,4 & $\checkmark$\,4 & n/a\\
$2$ & $6\pi$                      & $8$  & $\checkmark$\,3 & $\times$\,(4) & $\checkmark$\,3\\
$2$ & $3\pi,\ 5\pi$               & $2$  & $\checkmark$\,4 & $\times$\,(5) & n/a\\
$2$ & $3\pi,\ 3\pi,\ 3\pi,\ 3\pi$ & $2$  & $\checkmark$\,6 & $\checkmark$\,6 & n/a\\
$3$ & $10\pi$                     & $16$ & $\checkmark$\,5 & $\checkmark$\,5 & $\checkmark$\,5\\
$4$ & $7\pi/2,\ 25\pi/2$          & $3$  & $\checkmark$\,8 & n/a & n/a\\
\bottomrule
\end{tabular}
\caption{Eight signatures against the three possible holonomies. \emph{No row has
$\gcd_{\ZZ}(m_i)=1$, so none of them is covered by}~\cite[Prop.~2]{shen2022}.
``$\checkmark\,N$'' means realizable, $N$ being the smallest number of squares any
realization can have (Theorem~\ref{thm:construction}); ``$\times\,(k)$'' means unrealizable,
$k$ being the family of Table~\ref{tab:exceptions} responsible; ``n/a'' means the holonomy
is inadmissible for those angles, since $d$ must divide every $m_i$. The rows with mixed
verdicts are the point: the surface and the cone angles are fixed, and $\im\rho$ alone
decides. The last row has an odd cone order, so $\im\rho=\Zf$ is forced
(Corollary~\ref{cor:odd}) and the answer is easy, but the gcd is $3$, not $1$.}
\label{tab:examples}
\end{table}

\subsection{Examples}\label{sec:sixexamples}

\emph{1. A torus with two umbilics.} Cones of angles $\pi$ and $3\pi$, in the
curvature-aligned reading umbilics of index $+\tfrac12$ and $-\tfrac12$. With $\im\rho=\Zf$
there is a realization, and the smallest has two squares: Figure~\ref{fig:torus}(a). With
$\im\rho=2\Zf$ there is none, at any size. The reduced stratum is $Q(1,-1)$ on the torus,
empty by Proposition~\ref{prop:genus1}, or by (S7)(i) with $k=2$, and this is family~3. So a
torus whose principal directions have exactly two umbilics, one of each sign, admits no
quadrangulation aligned to them. It is the exact analogue, one rung down, of the
$3\pi/2$--$5\pi/2$ torus of~\cite{shen2022,ikrss2013}; it is not that exception, since here
every angle is a multiple of $\pi$, so the gcd condition does not apply.

\emph{2. Two more cones remove the obstruction.} Keep the torus and $\im\rho=2\Zf$, but ask
for angles $\pi,\pi,3\pi,3\pi$, two umbilics of each sign. Now there is a realization, and
Figure~\ref{fig:torus}(b) is one, on the four squares that Theorem~\ref{thm:construction}
shows to be the minimum. So the obstruction of Example~1 is not about the torus, and not
about the angles: it is about that pair of angles together with that holonomy. This is the
kind of statement a classification can make and a sufficient condition cannot.

\emph{3. One cone, three answers.} Genus $2$ with a single cone of angle $6\pi$, the minimal
case, a cone of valence $12$. For $\im\rho=\Zf$ the base mesh of Lemma~\ref{lem:base}
realizes it on $2g-1=3$ squares, drawn at $g=2$ in Figure~\ref{fig:basemesh}. For
$\im\rho=0$ the staircase of the same lemma realizes it on three squares, as a translation
surface in $\mathcal H(2)$. For $\im\rho=2\Zf$ it is not realizable at all: the reduced
stratum is $Q(4)$, which is empty, and this is family~4. Same surface, same singularity,
three cross fields, and the answers are yes, no, yes. The obstruction shows up already at
minimal size, since among all gluings of three squares those of genus $2$ with a single cone
of valence $12$ realize $\im\rho=\Zf$ and $\im\rho=0$ and never $2\Zf$; but what rules it
out at \emph{every} size is Theorem~\ref{thm:classification}.

\emph{4. Genus three: the obstruction is gone.} A single cone of angle $10\pi$ on a genus-$3$
surface is realizable for all three holonomies, each on the minimal $2g-1=5$ squares:
Lemma~\ref{lem:base} for $\im\rho=\Zf$ and for $\im\rho=0$, and the recorded base mesh for
$\im\rho=2\Zf$, which is the minimal quadratic stratum $Q(8)$. Families 4 and 5 are
genus-two phenomena and not the first terms of a pattern: by (S7) no primitive quadratic
stratum in genus $\ge3$ is empty.

\emph{5. And the genus-two pair.} Cones of angles $3\pi$ and $5\pi$ with $\im\rho=2\Zf$ is
family~5, reduced stratum $Q(1,3)$: a genus-two surface whose principal directions have
exactly two umbilics, of index $-\tfrac12$ and $-\tfrac32$, admits no aligned
quadrangulation. Raise the holonomy to $\im\rho=\Zf$ and four squares suffice. Keep
$\im\rho=2\Zf$ but spread the same total curvature over four cones of angle $3\pi$, four
umbilics of index $-\tfrac12$, and six squares suffice.

\emph{6. An odd cone that the condition still misses.} At $g=4$, cones of angles $7\pi/2$
and $25\pi/2$: here $m=(3,21)$, so $\gcd_{\ZZ}=3$ and~\cite[Prop.~2]{shen2022} does not
apply, while $3$ is odd, so Corollary~\ref{cor:odd} forces $\im\rho=\Zf$ and leaves a single
orbit. Corollary~\ref{cor:evenregime} says realizable, and Algorithm~\ref{alg:construction}
returns a mesh on the minimal $2g-2+n=8$ squares. This is the easy part of the uncovered
region, with no coprimality, no cone of angle $3\pi/2$ or $5\pi/2$, and still nothing to
check, and it is uncovered only because turning numbers in $\tfrac14\ZZ$ are a finer thing
to prescribe than a holonomy class in $\Zf$.

\begin{figure}[t]
\centering
\begin{tikzpicture}[font=\footnotesize,
  sq/.style={draw,thick,fill=black!4},
  fc/.style={font=\scriptsize\itshape,black!45},
  ed/.style={font=\scriptsize},
  an/.style={font=\scriptsize,align=center},
  cA/.style={circle,fill=black,inner sep=1.7pt},
  cB/.style={circle,draw,thick,fill=white,inner sep=1.5pt}]

\def\sd{1.5}

\begin{scope}
\node[anchor=east,font=\small] at (-0.75,0.75) {(a)};
\foreach \f/\x/\eb/\er/\et/\el in {0/0/a/b/c/d, 1/2.5/a/b/d/c}{
  \begin{scope}[xshift=\x cm]
    \draw[sq] (0,0) rectangle (\sd,\sd);
    \node[fc] at (0.5*\sd,0.5*\sd) {\f};
    \node[ed] at (0.5*\sd,-0.27) {\eb};
    \node[ed] at (\sd+0.27,0.5*\sd) {\er};
    \node[ed] at (0.5*\sd,\sd+0.27) {\et};
    \node[ed] at (-0.27,0.5*\sd) {\el};
    \node[cA] at (0,\sd) {};
  \end{scope}}
\node[an,anchor=north] at (2.0,-0.62)
  {$\alpha:\ (0,4),\ (1,5),\ (2,7),\ (3,6)$\\[1pt]
   genus $1$,\ $\im\rho=\Zf$;\ cones of angle $\pi$ ($\bullet$) and $3\pi$};
\end{scope}

\begin{scope}[yshift=-3.6cm]
\node[anchor=east,font=\small] at (-0.75,0.75) {(b)};
\foreach \f/\x/\eb/\er/\et/\el in
  {0/0/a/b/a/c, 1/2.5/d/b/e/c, 2/5.0/d/f/g/h, 3/7.5/e/h/g/f}{
  \begin{scope}[xshift=\x cm]
    \draw[sq] (0,0) rectangle (\sd,\sd);
    \node[fc] at (0.5*\sd,0.5*\sd) {\f};
    \node[ed] at (0.5*\sd,-0.27) {\eb};
    \node[ed] at (\sd+0.27,0.5*\sd) {\er};
    \node[ed] at (0.5*\sd,\sd+0.27) {\et};
    \node[ed] at (-0.27,0.5*\sd) {\el};
  \end{scope}}
\node[cA] at (5.0+\sd,\sd) {};   \node[cA] at (7.5,\sd) {};
\node[cB] at (5.0,\sd) {};       \node[cB] at (7.5+\sd,\sd) {};
\node[an,anchor=north] at (4.5,-0.62)
  {$\alpha:\ (0,2),\ (1,5),\ (3,7),\ (4,8),\ (6,12),\ (9,15),\ (10,14),\ (11,13)$\\[1pt]
   genus $1$,\ $\im\rho=2\Zf$;\ two cones of angle $\pi$ ($\bullet$, $\circ$) and two of $3\pi$};
\end{scope}
\end{tikzpicture}
\caption{Two explicit minimal witnesses on the torus, in the notation of
Figure~\ref{fig:basemesh}: side $s$ of square $f$ carries dart $4f+s$, numbered
counterclockwise from the bottom, and edges bearing the same letter are glued. Marked
corners are the cones of angle $\pi$, that is the vertices of valence $2$, where only two
squares meet; the remaining corners fall into the cones of angle $3\pi$. \textbf{(a)} The
signature of Example~1 with $\im\rho=\Zf$. Change the holonomy to $2\Zf$ and no such mesh
exists at any size, by family~3. \textbf{(b)} Adding one cone of each angle restores
realizability with $\im\rho=2\Zf$: the reduced stratum becomes $Q(1,1,-1,-1)$, which is
non-empty, and here is a mesh in it.}
\label{fig:torus}
\end{figure}

\section{Extensions and open problems}\label{sec:extensions}

\subsection{Surfaces with boundary: feature curves}\label{sec:boundary}

Cutting a surface along a network of feature curves produces a compact surface with boundary
on which the parametrization must send the boundary to axis-parallel segments. This
subsection carries the theory over. Let $M$ be compact oriented of genus $g$ with $b\ge1$
boundary components and interior marked points $C$. Boundary vertices are corners of angle
$\alpha=a\pi/2$ with $a\ge1$; a corner with $a=2$ is a straight point and carries no
information. The signature records the interior orders $m_i$, the cyclic sequences of corner
angles, and $\rho$, with
\[
  \rho(\partial_j)\equiv t_j:=\textstyle\sum_k(2-a_{j,k}) \pmod 4 ,
\]
the \emph{turning} of the $j$-th boundary component in quarter turns; this is forced, since
the boundary is geodesic between corners.

\begin{lemma}[Gauss--Bonnet with corners]\label{lem:gbboundary}
For a feature-aligned structure,
$\sum_i(4-v_i)+\sum_{j,k}(2-a_{j,k})=4\chi(M)=4(2-2g-b)$, where $v_i=m_i+4$.
Consequently $\sum_i m_i+\sum_{j,k}a_{j,k}$ is even.
\end{lemma}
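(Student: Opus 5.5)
The plan is to apply the Gauss--Bonnet theorem for flat surfaces with piecewise geodesic boundary to the flat cone metric of Lemma~\ref{lem:d1}. That lemma adapts verbatim to the bordered setting: the charts pull back a flat metric whose boundary develops to axis-parallel segments, so each boundary arc between corners is geodesic. With zero Gaussian curvature and zero geodesic curvature along the arcs, Gauss--Bonnet reduces to the atomic curvature at interior cone points plus the exterior turning at boundary corners:
\[
  \sum_i\bigl(2\pi-\theta_i\bigr)+\sum_{j,k}\bigl(\pi-\alpha_{j,k}\bigr)=2\pi\chi(M).
\]
Substituting $\theta_i=v_i\pi/2$ and $\alpha_{j,k}=a_{j,k}\pi/2$ and multiplying by $2/\pi$ gives exactly $\sum_i(4-v_i)+\sum_{j,k}(2-a_{j,k})=4\chi(M)$. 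Finally, $\chi(M)=2-2g-b$ for a genus-$g$ surface with $b$ boundary circles.

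To avoid quoting the bordered Gauss--Bonnet theorem, I would instead derive the identity combinatorially. First triangulate the surface geodesically, using (S1) adapted to the bordered case, with all cones and corners as vertices. Alternatively, double $M$ along its boundary: reflection across an axis-parallel segment is an isometry of the form $\bar z\mapsto\pm\bar z+t$. The double then carries a closed flat cone metric in which each interior cone appears twice and each corner of angle $a\pi/2$ becomes a cone of angle $a\pi$, that is of order $2a-4$. The closed Gauss--Bonnet relation $\sum(\text{orders})=4(2\chi(M)-2)\cdot$ (rewritten via $\chi(\text{double})=2\chi(M)$) gives
\[
  2\sum_i m_i+\sum_{j,k}(2a_{j,k}-4)=-4\chi(\text{double})=-8\chi(M).
\]
Halving this recovers the claimed identity. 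The orientation issue, that the double is a closed surface with holonomy in the dihedral group rather than $\Zf$, does not matter here, because only the angle count is used.

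For the parity consequence, rewrite the identity as $-\sum_i m_i+2\sum_{j,k}1-\sum_{j,k}a_{j,k}=4\chi(M)$. Reducing modulo $2$ gives $\sum_i m_i+\sum_{j,k}a_{j,k}\equiv0$.

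The main obstacle is justifying the doubling, or equivalently the bordered Gauss--Bonnet theorem, in this setting. The key points are that the boundary arcs are genuinely geodesic, and that straight points with $a=2$ contribute zero, so they can be ignored. Both facts follow from the axis-parallel condition.
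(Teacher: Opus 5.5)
Your argument is correct and is the paper's own proof: Gauss--Bonnet for a flat cone metric with geodesic boundary arcs, $\sum_i(2\pi-\theta_i)+\sum_{j,k}(\pi-\alpha_{j,k})=2\pi\chi(M)$, divided by $\pi/2$ and then reduced modulo $2$. The doubling route is a valid but unnecessary check. Your displayed doubled identity is right, though the inline expression $4(2\chi(M)-2)$ just before it is garbled, and after re-orienting the mirror charts as in Lemma~\ref{lem:double} the double's transitions lie in $\Zf\ltimes\RR^2$ rather than only in a dihedral group.
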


\begin{proof}
Gauss--Bonnet for a flat metric with conical singularities and geodesic boundary with
corners reads $\sum_i(2\pi-\theta_i)+\sum_{j,k}(\pi-\alpha_{j,k})=2\pi\chi$; divide by
$\pi/2$. Reducing modulo $2$ gives the parity statement.
\end{proof}

The parity constraint is necessary but not sufficient for quad-mesh realizability. The total
corner count is $4F=\sum_i v_i+\sum_{j,k}a_{j,k}=4n+\sum_i m_i+\sum_{j,k}a_{j,k}$, so
divisibility by four asks for $\sum_i m_i+\sum_{j,k}a_{j,k}\equiv0\pmod4$, whereas
Lemma~\ref{lem:gbboundary} gives only $\equiv0\pmod2$; the remaining factor of two is the
evenness of the boundary edge count, forced separately by
$4F=2E_{\mathrm{int}}+E_\partial$. The two conditions are independent.

\begin{lemma}\label{lem:affineboundary}
$H_1(M\setminus C;\ZZ)$ is free of rank $2g+b+n-1$; the classes $\gamma_i$ and
$\partial_j$ satisfy the single relation $\sum_i\gamma_i=\sum_j\partial_j$; and
$H_1(M\setminus C)/\gen{\gamma_i,\partial_j}\cong H_1(M)/\gen{\partial_j}\cong\ZZ^{2g}$
with unimodular induced intersection form. Hence the set of signatures with
prescribed orders and turnings is a coset of $H^1(M;\Zf)\cong\Zf^{2g}$.
\end{lemma}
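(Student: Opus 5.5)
The plan is to model $M\setminus C$ up to homotopy and compute its homology, then read off the claims about the boundary and puncture classes and the quotient lattice.

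\emph{Rank and relation.} Let $\overline M$ be the closed genus-$g$ surface obtained by capping each boundary component with a disk. Then $M\setminus C$ is $\overline M$ minus $n+b$ points, or more precisely minus $n$ points and $b$ open disks, which is homotopy equivalent to $\overline M$ minus $n+b$ points. For $n+b\ge1$ this is homotopy equivalent to a wedge of $2g+n+b-1$ circles. So $H_1$ is free of that rank, generated by the symplectic classes $a_k,b_k$ together with all the $\gamma_i$ and $\partial_j$. The only relation is the one coming from the boundary of the $2$-cell of the standard CW structure on the punctured surface. This says the sum of all the small loops vanishes. With orientations chosen so that $\partial_j$ is the boundary orientation of $M$ and $\gamma_i$ encircles $c_i$ positively, this reads $\sum_i\gamma_i=\sum_j\partial_j$. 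I would state the sign convention explicitly, since it is the only thing to check there.

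\emph{The quotient.} Killing the $\gamma_i$ corresponds to filling the punctures, so $H_1(M\setminus C)/\gen{\gamma_i}\cong H_1(M)$. Killing the $\partial_j$ corresponds to capping, so $H_1(M)/\gen{\partial_j}\cong H_1(\overline M)\cong\ZZ^{2g}$. Both steps follow from the long exact sequences of the pairs, or directly from the wedge-of-circles basis: after the relation is used to eliminate one $\partial_j$, the quotient is free on the images of $a_k,b_k$.

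The intersection form on $H_1(M\setminus C)$, defined as the algebraic count of transverse intersections of cycles, has the $\gamma_i$ and $\partial_j$ in its radical, because each is freely homotopic into an arbitrarily small collar and so can be displaced off any cycle. It therefore descends to the quotient. There it agrees with the intersection form of $\overline M$ on the curves $a_k,b_k$, which is the standard unimodular symplectic form.

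\emph{Coset statement.} A signature is a homomorphism $\rho\colon H_1(M\setminus C)\to\Zf$ with prescribed values $m_i$ on the $\gamma_i$ and $t_j$ on the $\partial_j$. Such a $\rho$ exists exactly when the prescription respects the relation, i.e.\ when $\sum_i m_i\equiv\sum_j t_j\pmod 4$. This congruence is Lemma~\ref{lem:gbboundary}: its left-hand side gives $-\sum m_i+\sum t_j\equiv 4\chi\equiv0$. Two such $\rho$ differ by a homomorphism vanishing on $\gen{\gamma_i,\partial_j}$, that is, by an element of $\operatorname{Hom}(\ZZ^{2g},\Zf)=H^1(\overline M;\Zf)\cong\Zf^{2g}$. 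The lemma writes this group as $H^1(M;\Zf)$, but I would identify it with the annihilator of the boundary classes; this identification uses that the quotient is free, so that $\operatorname{Hom}$ is exact.

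The main obstacle is this last bookkeeping: the orientation convention in the relation, and checking that the Gauss--Bonnet congruence with corners exactly matches the value of the relation, since a sign slip would make the coset empty. I would verify it on a disk with four right-angle corners, where $t=0$, no punctures, and $\chi=1$ gives $0=4-4$.
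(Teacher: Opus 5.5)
Your proof is correct and takes the paper's route (cap the boundary circles, fill the punctures, and treat the rank and the relation as standard), written out in more detail. Two of your additions fill gaps the paper leaves: your Gauss--Bonnet check that the coset is non-empty, and your remark that the difference group is $H^1(\overline M;\Zf)$, the annihilator of the $\partial_j$, which is not $H^1(M;\Zf)\cong\Zf^{2g+b-1}$ once $b\ge2$. One small point: your four-right-angle disk has $t\equiv0$ and no cones, so it passes under either sign convention and cannot catch a sign slip, whereas a disk with one cone of angle $3\pi/2$ and three right-angle corners does, since there $\gamma_1=\partial_1$ requires $-1\equiv3\pmod 4$.
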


\begin{proof}
The first two statements are standard. Capping each boundary component with a disk gives a
closed genus-$g$ surface and identifies the quotient with its first homology, on which the
intersection form is the standard symplectic one. Two signatures with the same orders and
turnings differ by a homomorphism killing all $\gamma_i$ and $\partial_j$, that is by an
element of $\operatorname{Hom}(\ZZ^{2g},\Zf)$.
\end{proof}

\begin{theorem}[Reduction Lemma with boundary]\label{thm:reductionboundary}
Let $g\ge1$ and fix the interior orders and the corner data, and put
$D=\gen{m_1,\dots,m_n,t_1,\dots,t_b}\le\Zf$. Two feature-aligned signatures with these
data lie in the same orbit of the group of homeomorphisms preserving $C$ and each
boundary component with its corner sequence if and only if they have the same
$\im\rho$; the number of orbits is the number of subgroups of $\Zf$ containing $D$.
\end{theorem}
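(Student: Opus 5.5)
The plan is to copy the closed-surface proof of Theorem~\ref{thm:reduction} step for step. Lemma~\ref{lem:affineboundary} supplies the coset structure, and only two ingredients need rechecking: point pushing and the base point with full symplectic stabilizer.

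\emph{Invariance.} First, $\im\rho$ is constant on orbits, exactly as before. A homeomorphism preserving $C$ and each boundary component with its corner sequence carries $\gamma_i$ to $\gamma_i$ and $\partial_j$ to $\partial_j$. So it maps $A$, the coset of signatures with the prescribed orders and turnings, to itself, and $\rho\circ\varphi_*$ has the same image as $\rho$.

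\emph{Base point and linear part.} Fix a basis $a_1,b_1,\dots,a_g,b_g$ lifting a symplectic basis of $H_1(M\setminus C)/\gen{\gamma_i,\partial_j}\cong\ZZ^{2g}$. Choose an embedded subsurface $\Sigma\cong\Sigma_{g,1}$ containing these curves and disjoint from $C$ and from a collar of $\partial M$. Its complement is a sphere with holes that contains all the cones and all the boundary components. Define $\rho_0$ to take the forced values on $\gamma_i$ and $\partial_j$ and to vanish on the $a_j,b_j$. This is consistent, because the only relation $\sum\gamma_i=\sum\partial_j$ is compatible with the data by Lemma~\ref{lem:gbboundary}. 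Reducing that lemma mod $4$ gives $\sum m_i\equiv\sum t_j$, and this is the one point I would check carefully: it rests on $4\chi$ and on the sign convention for $\rho(\partial_j)$.

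Mapping classes supported in $\Sigma$ fix $\rho_0$, by the argument of Lemma~\ref{lem:basepoint}, since $[\partial\Sigma]$ is a combination of the $\gamma_i$ and $\partial_j$ on which $\rho_0$ is already prescribed. By (S5) and (S4) they realize all of $\Sp(2g,\Zf)$ on $H^1$. Writing $\rho=\rho_0+v$ with $v\in\Zf^{2g}$, we get $\im(\rho_0+v)=\gen{D,v}$, where $D$ now includes the $t_j$.

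\emph{Translations.} The new step is to show that the translations available are $D\cdot\Zf^{2g}$, now including multiples of $t_j$. For interior cones this is Lemma~\ref{lem:push} verbatim. For a boundary component $\partial_j$, replace point pushing by \emph{boundary pushing}: drag $\partial_j$ once around a simple closed curve $\alpha$, which is the product $T_{\alpha_L}T_{\alpha_R}^{-1}$ of twists along the two boundary curves of an annulus containing $\partial_j$. The same transvection computation gives $x\mapsto x+\langle x,\alpha\rangle\partial_j$, hence a shift of $\rho$ by $t_j\langle\cdot,\alpha\rangle$. This homeomorphism is isotopic to the identity near $\partial_j$, so it preserves the component together with its corner sequence; the only care needed is to choose the isotopy so that it returns $\partial_j$ to itself pointwise, which the twist product does.

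Transitivity then follows exactly as in the proof of Theorem~\ref{thm:reduction}. Reduce modulo $D=d_D\Zf$. The translations act transitively on each fibre of $\Zf^{2g}\to(\ZZ/d_D)^{2g}$. Lemma~\ref{lem:sp} with $N=d_D$ matches vectors of equal content, lifted via (S4) and realized in $\Sigma$. Counting orbits gives the subgroups between $D$ and $\Zf$.

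I expect the main obstacle to be the boundary-pushing step, together with the consistency of the relation $\sum m_i\equiv\sum t_j$. Everything else is formal.
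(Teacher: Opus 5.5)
Your proposal is correct and follows the paper's proof essentially step for step. It uses the same genus-zero complement containing $C$ and every boundary component to build the base point, the same boundary slides $T_{\alpha_L}T_{\alpha_R}^{-1}$ as the analogue of point pushing, and Lemma~\ref{lem:sp} together with (S4) and (S5). The consistency check you flag does hold: Lemma~\ref{lem:gbboundary} gives $\sum_j t_j-\sum_i m_i=4\chi(M)$, so $\sum_i m_i\equiv\sum_j t_j \pmod 4$, which is exactly what makes $\rho_0$ well defined (the paper cites Lemma~\ref{lem:affineboundary} at this point).
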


\begin{proof}
As for Theorem~\ref{thm:reduction}, with the three ingredients adapted.
Lemma~\ref{lem:push} applies unchanged to pushing an interior cone, and also to
\emph{sliding a boundary component}: a collar of $\partial_j$ plays the role of the
puncture, the slide along $\alpha$ is again a product $T_{\alpha_L}T_{\alpha_R}^{-1}$ of
twists along the boundary curves of an annulus containing the collar, and the same
computation gives $x\mapsto x+\langle x,\alpha\rangle[\partial_j]$, hence
$\rho\mapsto\rho+t_j\langle\cdot,\alpha\rangle$; the slide preserves the corner sequence. By
Lemma~\ref{lem:affineboundary} these moves give exactly the translations by
$D\cdot\Zf^{2g}$. Lemma~\ref{lem:sp} is unchanged. In Lemma~\ref{lem:basepoint}, replace the
disk $D_0$ by an embedded genus-$0$ subsurface $P$ containing $C$ and all $b$ boundary
components of $M$ and having one further boundary circle; then
$\Sigma=M\setminus\operatorname{int}(P)$ has genus $g$ and one boundary circle, and setting
$\rho_0=0$ on a symplectic basis inside $\Sigma$ is consistent because
$\rho_0([\partial\Sigma])=\sum_im_i-\sum_jt_j=0$ by Lemma~\ref{lem:affineboundary}. The rest
of the argument is verbatim.
\end{proof}

\begin{corollary}\label{cor:features}
If some interior order $m_i$ is odd, or some boundary component has odd turning $t_j$,
equivalently an odd number of corners of odd angle, then $D=\Zf$, a single orbit remains,
and realizability does not depend on $\rho$.
\end{corollary}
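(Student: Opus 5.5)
The plan is to reduce Corollary~\ref{cor:features} to Theorem~\ref{thm:reductionboundary}, exactly as Corollary~\ref{cor:odd} was reduced to Theorem~\ref{thm:reduction}. There are three steps: show $D=\Zf$, count the orbits, and conclude independence of $\rho$. The only genuinely new point is the parity reformulation of ``odd turning''.

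\emph{Step 1: the parity reformulation.} I would first verify that $t_j$ is odd exactly when the $j$-th boundary component carries an odd number of corners of odd angle. By definition $t_j=\sum_k(2-a_{j,k})$. Reducing modulo $2$ gives $t_j\equiv\sum_k a_{j,k}\pmod 2$, and this sum has the same parity as the number of indices $k$ with $a_{j,k}$ odd. Straight points, with $a=2$, contribute $0$ and are harmless.

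\emph{Step 2: $D=\Zf$ and a single orbit.} Under either hypothesis the subgroup $D=\gen{m_1,\dots,m_n,t_1,\dots,t_b}\le\Zf$ contains an odd element. Any subgroup of $\Zf$ containing an odd element is all of $\Zf$, so $D=\Zf$. Theorem~\ref{thm:reductionboundary} then says the orbits correspond to the subgroups of $\Zf$ containing $D$. Here that is only $\Zf$ itself, so all signatures with the given interior orders and corner data form a single orbit. The hypothesis $g\ge1$ of that theorem is used at this point. For $g=0$ the quotient $\Zf^{2g}$ of Lemma~\ref{lem:affineboundary} is trivial, so the signature is already determined by the orders and turnings, and the conclusion holds trivially.

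\emph{Step 3: realizability does not depend on $\rho$.} This needs the boundary analogue of Lemma~\ref{lem:d2}(a): realizability is invariant under the homeomorphism group appearing in Theorem~\ref{thm:reductionboundary}. I would prove it by precomposing a feature-aligned atlas with the homeomorphism, as in the closed case. The homeomorphism preserves each boundary component together with its corner sequence, so boundary edges are still sent to axis-parallel segments and corners keep their angles. Combined with Step 2, this means that if one signature with the given data is realizable then all of them are.

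The main obstacle is this last invariance check. I expect it to be routine once one observes that the maps used, namely boundary slides, point pushes and the subgroup $H$ supported away from $P$, can be taken to be the identity near the boundary. Once that is in place, the atlas condition along the boundary is untouched.
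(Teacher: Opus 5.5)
Your proposal is correct and follows the paper's route, the boundary analogue of Corollary~\ref{cor:odd}: an odd $m_i$ or $t_j$ forces $D=\Zf$, so Theorem~\ref{thm:reductionboundary} leaves exactly one orbit, and orbit-invariance of realizability removes the dependence on $\rho$. The paper leaves implicit what you supply: the parity check $t_j\equiv\sum_k a_{j,k}\pmod 2$, the trivial genus-zero case excluded by the theorem's hypothesis $g\ge1$, and the boundary version of Lemma~\ref{lem:d2}(a) using moves that are the identity near $\partial M$; all three are handled correctly.
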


A single right-angle corner does \emph{not} by itself make $t_j$ odd: a component with one
corner of angle $\pi/2$ and one of angle $3\pi/2$ has $t_j=(2-1)+(2-3)=0$. What is needed is
an odd \emph{number} of odd-angle corners on the same component. Feature networks in
practice are full of odd-angle corners, and nothing makes their number even on every
component, so $D=\Zf$ is the normal state of affairs and the rotations along homology loops
are usually not part of the problem. This is a precise reason why the sufficient conditions
in the feature-aligned literature work as well as they do.

\begin{lemma}[Doubling]\label{lem:double}
Let $\widetilde M=M\cup_\partial\overline M$ be the double, a closed surface of genus
$2g+b-1$. A feature-aligned structure on $M$ doubles to a flat $\Zf$-cone structure on
$\widetilde M$ in which each interior cone of order $m_i$ gives two cones of order
$m_i$, each boundary corner of angle $a\pi/2$ gives one interior cone of angle $a\pi$,
that is of order $2a-4$, and
$\im\widetilde\rho\supseteq\gen{\im\rho,\ 2a\bmod 4:\text{all corners}}$.
\end{lemma}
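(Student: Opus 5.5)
We would prove the Doubling Lemma by building the doubled atlas explicitly from the feature-aligned atlas, then reading off the three claims locally (genus, cone orders) and through loops (holonomy).

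\emph{Construction.} Let $\overline M$ be $M$ with reversed orientation. Its charts are $\bar f_a=\overline{f_a}$, composed with complex conjugation so that they are orientation-preserving on $\overline M$. If $f_b=i^kf_a+t$, then $\bar f_b=i^{-k}\bar f_a+\bar t$. So $\overline M$ is again a seamless atlas, with rotations negated.

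Along a boundary component we use feature alignment. Between corners the boundary develops to an axis-parallel segment. In a chart $f$ near a boundary point, we may compose with $i^k z+t$ so that the segment lies on $\RR$ and $M$ lies locally in the upper half-plane. On the mirror sheet we use $\bar f$, which lands in the lower half-plane. Together these give a chart of $\widetilde M$ across the seam.

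Two such normalizations differ by a map preserving the axis-parallel line and the upper half-plane. With rotation part in $\Zf$, that map must be a real translation $z\mapsto z+t$, $t\in\RR$. Its conjugate is the same map, so the glued charts are compatible. All transitions of $\widetilde M$ therefore lie in $\Zf\ltimes\RR^2$.

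The genus follows from $\chi(\widetilde M)=2\chi(M)=2(2-2g-b)$, which gives genus $2g+b-1$.

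\emph{Cones.} An interior cone of $M$ has one copy in $M$ and one in $\overline M$, each with the same angle, so each has order $m_i$. A corner of angle $a\pi/2$ is a sector bounded by two geodesic rays. Doubling glues this sector to its mirror along both rays, producing a cone of angle $a\pi$, hence order $2a-4$. Straight points, with $a=2$, become regular points, as they should.

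\emph{Holonomy.} This is the only point needing care, and I expect it to be the main obstacle. We need $\im\widetilde\rho\supseteq\im\rho\cup\{2a \bmod 4\}$.

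\begin{itemize}
\item \emph{Loops in $M$.} The inclusion $M\setminus C\hookrightarrow\widetilde M\setminus\widetilde C$ restricts the doubled atlas to the original one. Every loop in $M$ therefore keeps its value, giving $\im\rho\subseteq\im\widetilde\rho$.
\item \emph{Corners.} A small loop around the new cone at a corner has holonomy equal to rotation by its angle $a\pi$, that is $2a$ quarter turns. This matches $\widetilde\rho(\gamma)=2a-4\equiv 2a \pmod 4$.
\item \emph{Seam direction.} The reflected sheet contributes negated values of $\rho$. Since $\im\rho$ is a subgroup, negation adds nothing new.
\end{itemize}

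Taking the subgroup generated by these elements gives the stated inclusion. Equality is not claimed: loops crossing the seam through two different boundary components may contribute further values, and we make no attempt to control them.
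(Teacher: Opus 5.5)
Your proposal is correct and follows the paper's proof: you extend the atlas across the axis-parallel boundary arcs by reflection (conjugation normalizes $\Zf\ltimes\RR^2$), get the genus from $\chi(\widetilde M)=2\chi(M)$, double each corner sector to a cone of angle $a\pi$, and obtain the inclusion from loops in $M$ together with the small loops around the new cones. Your check that two normalized seam charts differ by a real translation spells out the compatibility step the paper states in one line. One small precision: the corners lie in $\widetilde C$, so loops in $M\setminus C$ should first be pushed off the boundary before being included into $\widetilde M\setminus\widetilde C$.
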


\begin{proof}
Reflecting the atlas across each boundary arc extends it over the mirror copy: a boundary arc
develops to an axis-parallel segment, and reflection in such a segment is an isometry of the
plane, so after re-orienting the mirror charts the doubled atlas again has transitions in
$\Zf\ltimes\RR^2$. The genus is $2g+b-1$ by Euler characteristic. A boundary corner of angle
$\alpha$ becomes an interior point of total angle $2\alpha$, and an interior cone is
duplicated. Finally $\im\widetilde\rho$ contains $\im\rho$ because $H_1(M)$ maps to
$H_1(\widetilde M)$, and it contains the loops around the new cones, whose
$\widetilde\rho$-values are $2a-4\equiv2a$.
\end{proof}

The inclusion can be strict: crossing the boundary contributes a rotation by $\pi$ whenever
the two reflected charts differ in direction, so the double can carry holonomy the signature
of $M$ does not. The smallest example is a genus-$0$ surface with three straight boundary
circles and trivial $\rho$, whose double has $\im\widetilde\rho=2\Zf$.

\begin{corollary}\label{cor:doubleobstruction}
If every closed signature consisting of genus $2g+b-1$, the doubled orders of
Lemma~\ref{lem:double}, and a subgroup $S\supseteq\gen{\im\rho,2a\bmod4}$ is
unrealizable, then the feature-aligned signature is unrealizable.
\end{corollary}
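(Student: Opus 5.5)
The proof will be by contraposition. Suppose the feature-aligned signature on $M$ is realizable, and let $\mathcal A$ be a feature-aligned seamless atlas realizing it. We will produce a realizable closed signature from the family described in the statement. That contradicts the hypothesis that every member of the family is unrealizable.

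First apply Lemma~\ref{lem:double} to $\mathcal A$. It gives a flat $\Zf$-cone structure on the double $\widetilde M$, a closed surface of genus $2g+b-1$. Its cone data are exactly the doubled orders: two copies of each $m_i$, and one cone of order $2a-4$ for each boundary corner of angle $a\pi/2$. Corners with $a=2$ give order $0$; these are marked points, which Definition~\ref{def:signature} allows.

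By Lemma~\ref{lem:d1}, this structure is a seamless parametrization of $(\widetilde M,\widetilde C)$, where $\widetilde C$ is the set of doubled cone points. Its holonomy signature $\widetilde s=(\widetilde m,\widetilde\rho)$ is therefore realizable by definition. Set $S:=\im\widetilde\rho$. Lemma~\ref{lem:double} gives the inclusion
\[
  S\supseteq\gen{\im\rho,\ 2a\bmod4:\text{all corners}} .
\]
So $\widetilde s$ has the genus, the orders and a holonomy subgroup $S$ of the kind quantified over in the hypothesis. This contradicts the assumption that all such closed signatures are unrealizable.

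The one point needing care is how "closed signature with subgroup $S$" is read. If it means a specific $\widetilde\rho$ with $\im\widetilde\rho=S$, we need to know that realizability depends only on $S$ and not on the particular character.
\begin{itemize}
  \item When $2g+b-1\ge1$, this follows from Theorem~\ref{thm:reduction} together with Lemma~\ref{lem:d2}(a): signatures with fixed orders and the same $\im$ lie in one $\MCG$-orbit, and realizability is orbit-invariant. The orders are matched after relabelling equal-order points, as in the conventions.
  \item When the genus is $0$, i.e.\ $g=0$ and $b=1$, the character is determined by the orders, so there is nothing to check.
\end{itemize}

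A second bookkeeping check is that $\widetilde s$ satisfies Gauss--Bonnet, so that it really is an admissible closed signature. This is automatic, because it comes from an actual flat cone metric. It can also be confirmed by doubling Lemma~\ref{lem:gbboundary}.

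I expect no real obstacle. The only substantive input is Lemma~\ref{lem:double}, and I am taking it as given. In particular, the statement that the reflected atlas has transitions in $\Zf\ltimes\RR^2$ after the mirror charts are re-oriented (using that boundary arcs develop to axis-parallel segments) is where the content lies, and it has already been established.
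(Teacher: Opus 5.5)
Your proof is correct and is the argument the paper intends (it gives no separate proof, treating the corollary as immediate from Lemma~\ref{lem:double}): a realization of the feature-aligned signature doubles, via Lemmas~\ref{lem:double} and~\ref{lem:d1}, to a realization of one of the closed signatures being quantified over, with $S=\im\widetilde\rho$. Under the literal universal reading, $\widetilde s$ is itself one of those signatures, so your appeal to Theorem~\ref{thm:reduction} is not needed; it is harmless, and it covers the alternative reading in which a signature is specified only up to its subgroup.
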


This is the only direction doubling gives for free, and by itself it gives nothing. Over all
$3058$ admissible feature-aligned signatures whose corner count fits in four squares,
Corollary~\ref{cor:doubleobstruction} never applies. The reason is structural: the doubled
orders are even at the corners, so a double that could land in one of the empty quadratic
strata has $\im\widetilde\rho=2\Zf$, while the same orders with $\im\widetilde\rho=\Zf$ give
a primitive $4$-differential, and those strata are non-empty. Deciding the boundary case
therefore requires non-emptiness of \emph{real} strata, meaning $k$-differentials invariant
under a prescribed anti-holomorphic involution with the feature network as fixed locus,
which is the main question this paper leaves open on that side.

\subsection{Fixed conformal structure}\label{sec:fixed}

A second, differently quantified answer exists in the literature: at a \emph{fixed}
conformal structure and \emph{fixed} cone positions there is an Abel--Jacobi
criterion~\cite{qmg2,qmg3}. That line of work replaced the earlier quadratic-differential
(foliation) approach of~\cite{foliation2017} by meromorphic \emph{quartic} differentials
precisely in order to reach singularities of odd topological valence, and
Corollary~\ref{cor:oddvalence} below is the reason that step was necessary. The following
makes the relation to the present paper precise.

\begin{proposition}\label{prop:fixed}
Let $X$ be a Riemann surface of genus $g$, $p_1,\dots,p_n$ distinct points, and $m_i>-4$
integers with $\sum_im_i=4(2g-2)$. Put
\[
  E(X,p,m)=\Big\{e\ \Big|\ 4\ :\ e\mid m_i\ \forall i\ \text{ and }\
  \textstyle\sum_i(m_i/e)p_i\sim (4/e)K_X\Big\},
\]
where $\sim$ denotes linear equivalence. Then a $4$-differential $q$ with
$\divisor(q)=\sum_im_ip_i$ exists if and only if $1\in E$, in which case $q$ is unique
up to scale and $\im\rho_q=d\Zf$ with $d=\max E(X,p,m)$.
\end{proposition}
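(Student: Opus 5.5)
The plan is to treat $X$ and the points as fixed and translate everything into line bundles on $X$, using Lemma~\ref{lem:d4} for the holonomy part.

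\emph{Existence.} A meromorphic section $q$ of $K_X^4$ with $\divisor(q)=\sum_i m_ip_i$ exists if and only if the line bundle $\mathcal O(\sum_i m_ip_i)$ is isomorphic to $K_X^4$, that is, $\sum_i m_ip_i\sim 4K_X$. This is exactly the condition $1\in E$. Uniqueness up to scale follows because two such sections have a ratio that is a meromorphic function with trivial divisor on a compact surface, hence a nonzero constant.

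\emph{Identifying $d$.} Fix $q$ and an $e\mid 4$. By Lemma~\ref{lem:d4}, $\im\rho_q\subseteq e\Zf$ if and only if $q=\eta^e$ for a meromorphic section $\eta$ of $K^{4/e}$, and then necessarily $e\mid m_i$ and $\divisor(\eta)=\sum_i(m_i/e)p_i$. So the implication $\im\rho_q\subseteq e\Zf\Rightarrow e\in E$ is immediate.

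For the converse, suppose $e\in E$. Then there is a section $\eta$ of $K^{4/e}$ with divisor $\sum_i (m_i/e)p_i$, by the same argument as in the existence step applied to $K^{4/e}$. The power $\eta^e$ is a section of $K^4$ with divisor $\sum_i m_ip_i$, so by uniqueness $q=c\,\eta^e$ for some constant $c\neq0$. Absorbing $c^{1/e}$ into $\eta$ gives $q=\eta^e$, and Lemma~\ref{lem:d4} then yields $\im\rho_q\subseteq e\Zf$.

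Hence $E=\{e\mid 4:\im\rho_q\subseteq e\Zf\}$. Since the divisors of $4$ form a chain and $\im\rho_q=d\Zf$, this set is $\{e: e\mid d\}$, whose maximum is $d$.

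\emph{Main obstacle.} The only delicate point is the converse direction in the second step. Linear equivalence only produces \emph{some} $e$-th root, and one must use uniqueness of $q$ up to scale to conclude that $q$ itself is an $e$-th power. This is where fixing $X$ and the points matters: without it, different $q$ could have different holonomy. A secondary check is that $1\in E$ is implicit whenever $E\neq\emptyset$: if $e\in E$, then multiplying the relation $\sum_i(m_i/e)p_i\sim(4/e)K_X$ by $e$ gives $\sum_i m_ip_i\sim 4K_X$, so $\max E$ is well defined exactly when $q$ exists.
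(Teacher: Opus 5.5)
Your proposal is correct and is essentially the paper's proof. Existence and uniqueness up to scale come from triviality of $K_X^4\otimes\mathcal O(-\sum_i m_ip_i)$. You then use Lemma~\ref{lem:d4} in both directions, with the same constant rescaling of $\eta$ that the paper uses to turn a root of $q$ up to scale into an exact root. Finally, the total ordering of subgroups of $\Zf$ gives $d=\max E$. Your extra remark that $e\in E$ implies $1\in E$ (multiply the linear equivalence by $e$) is left implicit in the paper; it is what makes $\max E$ meaningful exactly when $q$ exists.
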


\begin{proof}
A meromorphic section of $K_X^4$ with prescribed divisor exists if and only if the bundle
\[
  K_X^4\otimes\mathcal O\big(-\textstyle\sum_i m_ip_i\big)
\]
is trivial, which is the displayed linear equivalence for $e=1$, and it is then unique up to
a constant. By Lemma~\ref{lem:d4} we have
$q=\eta^{e}$ for a $(4/e)$-differential $\eta$ exactly when $\im\rho_q\subseteq e\Zf$, and
such an $\eta$ has
$\divisor(\eta)=\sum(m_i/e)p_i$, so it exists if and only if $e\in E$. (Given $e\in E$ one
gets $\eta$ with $\eta^{e}=cq$ for some constant $c\ne0$; replacing $\eta$ by
$c^{-1/e}\eta$ makes $\eta^{e}=q$ exactly.) Since the subgroups of $\Zf$ are totally
ordered, $\im\rho_q$ corresponds to the largest such $e$.
\end{proof}

Thus at fixed $(X,p)$ the holonomy is not free but computed. On a torus, where $K$ is
trivial, $e\in E$ reads $\sum_i(m_i/e)c_i=0$ in the group law, which is exactly
conditions~\eqref{eq:aj} and~\eqref{eq:prim} of Proposition~\ref{prop:genus1}.

\begin{corollary}\label{cor:twoquestions}
A holonomy signature with orders $m$ and $\im\rho=d\Zf$ is realizable if and only if
there exist a Riemann surface $X$ of genus $g$ and distinct points $p_i$ with
$\max E(X,p,m)=d$.
\end{corollary}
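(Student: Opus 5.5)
The plan is to combine Theorem~\ref{thm:main}, the dictionary Lemmas~\ref{lem:d3} and~\ref{lem:d4}, and Proposition~\ref{prop:fixed}. Theorem~\ref{thm:main} already says that realizability of $(m,\rho)$ with $\im\rho=d\Zf$ is equivalent to non-emptiness of the stratum of primitive $(4/d)$-differentials with orders $m_i/d$ in genus $g$. So it suffices to show that this stratum is non-empty exactly when some $(X,p)$ has $\max E(X,p,m)=d$.

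\emph{From the stratum to a pair $(X,p)$.} Suppose the stratum is non-empty, and let $\eta$ be a primitive $(4/d)$-differential on $X$ with $\divisor(\eta)=\sum_i(m_i/d)p_i$. Put $q=\eta^{d}$. Then $\divisor(q)=\sum_i m_ip_i$, so $1\in E(X,p,m)$. By Lemma~\ref{lem:d4}, $\im\rho_q=d\Zf$. Proposition~\ref{prop:fixed} says that, given $1\in E$, $q$ is the unique $4$-differential with this divisor up to scale, and $\im\rho_q=(\max E)\Zf$. Hence $\max E=d$.

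\emph{From a pair $(X,p)$ to the stratum.} Conversely, suppose $\max E(X,p,m)=d$. Since $E\subseteq\{1,2,4\}$ is nonempty, and since $e\in E$ for any $e$ forces $1\in E$ (raise the $(4/e)$-differential to the $e$-th power), we have $1\in E$. Proposition~\ref{prop:fixed} then gives $q$ with $\divisor(q)=\sum_i m_ip_i$ and $\im\rho_q=d\Zf$. By Lemma~\ref{lem:d4}, $q=\eta^{d}$ with $\eta$ primitive of orders $m_i/d$, so the stratum is non-empty. Note that the points $p_i$ are distinct by hypothesis, which matches the stratum condition; this is the only point to watch.

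\emph{Concluding, and the main obstacle.} Combining the two directions with Theorem~\ref{thm:main} proves the corollary. The main point needing care is the second direction: one must know that $\max E$ actually equals $d$, so that $d$ is not merely an element of $E$. If $d$ were only an element, $q$ could be a higher power and the resulting $\eta$ would fail to be primitive. Proposition~\ref{prop:fixed} settles this through uniqueness up to scale: the holonomy is an invariant of $(X,p,m)$ and does not depend on the choice of $q$. Given that uniqueness, everything else is bookkeeping.
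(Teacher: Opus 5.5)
Your proof is correct and matches the argument the paper leaves implicit: Theorem~\ref{thm:main} (with Lemma~\ref{lem:d4}) reduces realizability to the existence of some $(X,q)$ with $\divisor(q)=\sum_i m_ip_i$ and $\im\rho_q=d\Zf$, and Proposition~\ref{prop:fixed}, through uniqueness of $q$ up to scale, identifies that with $\max E(X,p,m)=d$. In the converse, extracting $\eta$ from $q$ is a harmless detour, since Theorem~\ref{thm:main} just re-forms $q=\eta^{d}$; your checks that $1\in E$ and that the $p_i$ are distinct are the right points to verify.
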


So the fixed-structure criterion is the pointwise condition, and the question of this paper
is whether the locus it cuts out is non-empty. That is what the parametrization problem
needs, a parametrization being free to choose its flat metric.

\begin{corollary}\label{cor:oddvalence}
If some cone has odd topological valence, that is if some $m_i$ is odd, then
$E(X,p,m)\subseteq\{1\}$ for every $(X,p)$. Hence \emph{if} a $4$-differential with divisor
$\sum_im_ip_i$ exists on $(X,p)$ at all, it is primitive, with $\im\rho=\Zf$.
\end{corollary}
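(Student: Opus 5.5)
The plan is to read the statement off Proposition~\ref{prop:fixed}, with one divisibility observation doing all the work. By definition every $e\in E(X,p,m)$ satisfies $e\mid 4$ and $e\mid m_i$ for all $i$. If some $m_i$ is odd, then the only divisor of $4$ dividing it is $e=1$. Hence $E(X,p,m)\subseteq\{1\}$ for every choice of $(X,p)$. This uses no geometry at all: the divisibility clause in the definition of $E$ already kills $e=2$ and $e=4$, before the linear-equivalence condition is even examined.

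For the second sentence, suppose a $4$-differential $q$ with $\divisor(q)=\sum_i m_ip_i$ exists on $(X,p)$. Proposition~\ref{prop:fixed} then gives $1\in E$, so $E=\{1\}$, and it also gives $\im\rho_q=d\Zf$ with $d=\max E=1$, that is $\im\rho_q=\Zf$. Primitivity follows from Lemma~\ref{lem:d4}. If $q=\eta^e$ for some $e>1$ dividing $4$, we would have $\im\rho_q\subseteq e\Zf\ne\Zf$, which is a contradiction. Equivalently, Lemma~\ref{lem:d4} already forces $e\mid m_i$, which is impossible for odd $m_i$.

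As a cross-check that needs no existence of $q$, I would also note a direct argument. The loop $\gamma_i$ around $p_i$ has $\rho_q(\gamma_i)=m_i\bmod 4$, which is odd, and a subgroup of $\Zf$ containing an odd element is all of $\Zf$. This is the same fact as in Corollary~\ref{cor:odd}.

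None of these steps is a real obstacle. The one point to state carefully is the conditional. The corollary does not assert that a $q$ exists on a given $(X,p)$, since that still requires the Abel--Jacobi condition $\sum_i m_ip_i\sim 4K_X$. It asserts only that any such $q$ is primitive. In the write-up I would add the interpretive remark: since $E\subseteq\{1\}$, no $q$ with such a divisor is a square of a quadratic differential, so methods built on quadratic differentials cannot reach cones of odd valence.
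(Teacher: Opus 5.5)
Your proposal is correct and matches the paper's proof, which is the same one-line observation that $e\mid 4$ together with $e\mid m_i$ for an odd $m_i$ forces $e=1$; the second sentence is read off Proposition~\ref{prop:fixed} and Lemma~\ref{lem:d4} exactly as you do. Your extra cross-check through $\rho_q(\gamma_i)=m_i\bmod 4$ is also valid, and it is the same fact that drives Corollary~\ref{cor:odd}.
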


\begin{proof}
$e\in E$ requires $e\mid m_i$ for all $i$; an odd $m_i$ forces $e=1$.
\end{proof}

The inclusion cannot be strengthened to an equality. Whether $1\in E$, that is whether $q$
exists at all on that particular $(X,p)$, is the Abel--Jacobi condition of
Proposition~\ref{prop:fixed}, and it fails for most $(X,p)$. An odd order rules out a
nontrivial power; it does not produce a differential.

Odd valence is therefore exactly the regime $D=\Zf$ of Corollary~\ref{cor:odd}: the single
orbit, the case in which the rotations along homology loops carry no information, and the
case in which the object is a genuine $4$-differential rather than a square or a fourth
power. A method built on quadratic differentials sees only $e\ge2$ and cannot reach it; for
the topological question the same regime is the easiest one.

\subsection{Open problems}\label{sec:open}

\begin{enumerate}[label=\emph{(\arabic*)},itemsep=2pt]
  \item \emph{The external input.} Theorem~\ref{thm:classification} rests on (S7) and on
  nothing else outside \S\S\ref{sec:setup}--\ref{sec:classification}. Its genus-one cases are
  reproved here as Proposition~\ref{prop:genus1}; a self-contained proof of the genus-two
  case $k=2$, and of the assertion that the list stops there, would remove the dependence on
  (S7) and make the higher-genus non-emptiness argument self-contained.
  \item \emph{Completeness of the construction.} Three gaps, in increasing order of
  difficulty. First, prove a general lemma for the figure-eight move, including its vertex,
  genus and holonomy formulas. Cones of order $-2$ come only from that move, and it is the
  only local move the algorithm uses that still lacks a general proof:
  Lemma~\ref{lem:split} covers the generic split and Lemma~\ref{lem:oddtwist} the order $-3$
  case. Second, supply a closed-form family of base meshes with $\im\rho=2\Zf$; we have
  explicit ones only for $2\le g\le8$, found by search rather than by formula, whereas
  Lemma~\ref{lem:base} gives $d=1$ and $d=4$ in closed form for every genus. Third, and
  hardest: does step 2 of Algorithm~\ref{alg:construction} ever get stuck? Equivalently, does
  a cone of valence $w$ on a mesh of genus $\ge2$ always carry a loop of the gap the next
  split wants, after remeshing if necessary? An affirmative answer would remove the
  dependence of Theorem~\ref{thm:classification} on (S7), reproving the whole classification
  constructively and with explicit minimal witnesses.
  \item \emph{Real strata.} Deciding the feature-curve case (\S\ref{sec:boundary}) requires
  non-emptiness of strata of $k$-differentials invariant under a prescribed
  anti-holomorphic involution. All the evidence we have points at these being non-empty
  whenever Gauss--Bonnet and the parity constraint of Lemma~\ref{lem:gbboundary} allow,
  which would make feature-aligned existence unconditional.
  \item \emph{Fixed connectivity.} Everything here allows refinement (Lemma~\ref{lem:d2}).
  Existence at a fixed mesh connectivity is a different and probably much harder question.
  \item \emph{Quantization.} Square-tiled surfaces are exactly the seamless parametrizations
  that already are integer-grid maps, so the gap between Theorem~\ref{thm:main} and
  ``realizable by a mesh with at most $N$ squares'' is the quantization gap.
\end{enumerate}

\end{document}